\tikzstyle directed=[postaction={decorate,decoration={markings,
    mark=at position #1 with {\arrow{>}}}}]
\tikzstyle rdirected=[postaction={decorate,decoration={markings,
    mark=at position #1 with {\arrow{<}}}}]
\definecolor{myblue}{rgb}{0,.5,1}
\definecolor{myred}{rgb}{0.9,0,0}
\definecolor{mygreen}{rgb}{0,0.7,0}
\newcommand{\bV}{\raisebox{0.03cm}{\mbox{\footnotesize$\textstyle{\bigwedge}$}}}
\newcommand{\gln}{{\mathfrak{gl}_{n}}}
\newcommand{\glk}{{\mathfrak{gl}_{k}}}
\newcommand{\sln}{{\mathfrak{sl}_{n}}}
\newcommand{\slk}{{\mathfrak{sl}_{k}}}
\newcommand{\slt}{{\mathfrak{sl}_{2}}}
\newcommand{\glt}{{\mathfrak{gl}_{2}}}
\newcommand{\brak}[1]{\langle #1\rangle}
\newcommand{\n}{\noindent}
\newcommand{\bi}{{\boldsymbol{i}}}
\newcommand{\bj}{{\boldsymbol{j}}}
\DeclareMathOperator{\smod}{\mathrm{-}smod}
\DeclareMathOperator{\Kom}{Kom}
\DeclareMathOperator{\seq}{Seq}
\DeclareMathOperator{\cseq}{CSeq}
\theoremstyle{plain}
\newtheorem{thm}{Theorem}[section]
\newtheorem{lem}[thm]{Lemma}
\newtheorem{prop}[thm]{Proposition}
\theoremstyle{definition}
\newtheorem{rem}[thm]{Remark}
\newtheorem{ex}[thm]{Example}
\theoremstyle{definition}
\newtheorem{defn}[thm]{Definition}
\newcommand{\bN}{\mathbb{N}}
\newcommand{\bZ}{\mathbb{Z}}
\newcommand{\fF}{\mathfrak{F}}
\newcommand{\fR}{\mathfrak{R}}
\newcommand{\fS}{\mathfrak{S}}
\newcommand{\fW}{\mathfrak{W}}
\newcommand{\cC}{\mathcal{C}}
\newcommand{\cH}{\mathcal{H}}
\newcommand{\cR}{\mathcal{R}}
\newcommand{\cU}{\mathcal{U}}
\DeclareMathOperator{\Hom}{Hom}
\DeclareMathOperator{\HOM}{HOM}
\DeclareMathOperator{\id}{Id}
\newcommand{\xra}[1]{\xrightarrow{#1}}
\long\def\@makecaption#1#2{%
    \vskip 10pt
    \setbox\@tempboxa\hbox{%
\small{#1: }\ignorespaces #2}%
    \ifdim \wd\@tempboxa >\captionwidth {%
        \rightskip=\@captionmargin\leftskip=\@captionmargin
        \unhbox\@tempboxa\par}%
      \else
        \hbox to\hsize{\hfil\box\@tempboxa\hfil}%
    \fi}
\newdimen\@captionmargin\@captionmargin=2\parindent
\newdimen\captionwidth\captionwidth=\hsize
\let\fullref\autoref
\def\makeautorefname#1#2{\expandafter\def\csname#1autorefname\endcsname{#2}}
\subjclass[2010]{81R50 (primary), and 17B37, 57M25, 18G60 (secondary)}
\title{Not even Khovanov homology}
\author{Pedro Vaz}
\address{Institut de Recherche en Math\'ematique et Physique\\
Universit\'e Catholique de Louvain\\ 
Chemin du Cyclotron 2\\ 
1348 Louvain-la-Neuve\\ 
Belgium}
\email{pedro.vaz@uclouvain.be}
\begin{document}
 \usetikzlibrary{decorations.pathreplacing,backgrounds,decorations.markings}
\tikzset{doubleblack/.style={draw=black,double=black!40!white,double distance=1.5pt,thin}}
\tikzset{doublered/.style={draw=myred,double=myred!40!white,double distance=1.5pt,thin}}
\tikzset{doubleblue/.style={draw=myblue,double=myblue!40!white,double distance=1.5pt,thin}}
\tikzset{doublegreen/.style={draw=mygreen,double=mygreen!40!white,double distance=1.5pt,thin}}
\tikzset{bdot/.style={fill,circle,color=blue,inner sep=3pt,outer sep=0}}
%
\newdimen\captionwidth\captionwidth=\hsize
%
%
\begin{abstract}
  We construct a supercategory that can be seen as a skew version of (thickened) KLR algebras for the type $A$ quiver.  
We use our supercategory to construct homological invariants of tangles and show that 
for every link our invariant gives a link homology theory 
supercategorifying the Jones polynomial. 
Our homology is distinct from even Khovanov homology and
we present evidence supporting the  
conjecture that it is isomorphic to odd Khovanov homology. 
We also show that cyclotomic quotients of our supercategory give supercategorifications 
of irreducible finite-dimensional representations of $\gln$ of level 2. 
\end{abstract}
\maketitle

%
%
\pagestyle{myheadings}
\markboth{\em\small Pedro Vaz}{\em\small Not even Khovanov homology}
%
%
%
\section{Introduction}\label{sec:intro}

After the appearance of odd Khovanov homology in~\cite{ORS} there has been a certain interest in odd
categorified structures and supercategorification (see for
example~\cite{Egilmez-Lauda,EKL,EllisLauda,EllisQi,KKO,KKO2,LaudaRussell,NV-OHn}). 
In contrast to (even) Khovanov homology, odd Khovanov homology has an anticommutative feature. 
Both theories categorify the Jones polynomial and both agree modulo 2, but they are intrinsically distinct   
(see~\cite{shumakovitch} for a study of the properties of odd Khovanov homology and a comparison with even
Khovanov homology).

A construction of odd Khovanov homology using higher representation theory is still missing. 
In the case of even Khovanov homology this question was solved in~\cite{webster} using categorification of
tensor products and the WRT invariant and in \cite{LQR} using categorical Howe duality. 

\smallskip

In this paper we construct a supercategorification of the Jones invariant for tangles 
using higher representation theory. 
In particular, we define a supercategory in the spirit of Khovanov and Lauda's diagrammatics   
that can be seen as a superalgebra version of KLR algebras~\cite{KL1,R1} of level 2 for the $A_n$ quiver. 
We present our supercategory in the form of a graphical calculus reminiscent of the thick calculus for
categorified $\slt$ ~\cite{KLMS} and
$\sln$~\cite{stosic} (see also~\cite{EKL} for a thick calculus for the odd nilHecke algebra).
Our supercategory admits cyclotomic quotients that supercategorify irreducibles of $U_q(\glk)$ of level 2.  

\smallskip 

We use cyclotomic quotients of our supercategories as input to Tubbenhauer's~\cite{tubbenhauer}
approach to Khovanov-Rozansky 
homologies. It is based on $q$-Howe duality and uses only the lower half of the quantum group $U_q(\glk)$
to produce an invariant of tangles. 
In our case we obtain an invariant that shares several similarities with odd Khovanov homology when restricted to links. 
For example, it decomposes as a direct sum of two copies of a reduced homology and it 
produces chronological Frobenius algebras, analogous to the ones that can be
extracted from~\cite{ORS} (see~\cite{putyra} for explanations).  
Both theories coincide over $\bZ/2 \bZ$.   
We also give computational evidence that our invariant is distinct from even Khovanov homology and that support the 
conjecture that for every link $L$ it coincides with the odd Khovanov homology of $L$.

\subsection*{Acknowledgements}
We thank Daniel Tubbenhauer, Kris Putyra and Gr\'egoire Naisse for interesting discussions.
The author was supported by the Fonds de la Recherche Scientifique - FNRS under Grant no. MIS-F.4536.19.

%
%
\section{The supercategory $\fR$}\label{sec:skewKLR}

\subsection{The supercategory $\fR(\nu)$}

We follow~\cite{BrundanEllis-Monoidal} regarding supercategories.  
For objects $X,Y$  in a supercategory $\cC$ we write $\Hom^0_\cC(X,Y)$ (resp. $\Hom^1_\cC(X,Y)$) for its space of
even (resp. odd) morphisms and we write $p(f)$ for the parity of $f\in\Hom^i_\cC(X,Y)$. 
If $\cC$ has additionally a $\bZ$-grading we denote by $q^sX$ a grading shift up of $X$ by $s$ units and we consider only 
morphisms that preserve the $\bZ$-grading. In this case we write $\Hom_\cC(X,Y)=\oplus_{s\in \bZ}\Hom_\cC(X,q^{s}Y)$. 
We follow the grading conventions in~\cite{LQR}, which are aligned with the tradition in link homology.
This means that a map of degree $s$ from $X$ to $Y$ yields a degree zero map from $X$ to $q^sY$. 

\medskip

Fix a unital ring $\Bbbk$. 
Let $\alpha_1,\dotsc,\alpha_n$ denote the simple roots of $\sln$ and $\brak{-,-}$
their inner product: 
$\brak{\alpha_i,\alpha_i}=2$, $\brak{\alpha_i,\alpha_{i\pm1}}=-1$, and $\brak{\alpha_i,\alpha_j}=0$ otherwise.
Fix also a choice of scalars $Q$ consisting of $r_i,t_{ij}\in\Bbbk^\times$ for all $i,j\in I:=\{1,\dotsc,n\}$, such that
$t_{ii}=1$ and $t_{ij}=t_{ji}$ when $\vert i- j\vert\neq 1$. 
Let also $p_{ij}$ be defined by $p_{ii} = p_{i+1,i} = 1$ and otherwise $p_{ij}=0$. 

\medskip

For each $\nu = \sum_{i \in I} \nu_i.i \in \bN_0[I]$, 
we consider the set of (colored) sequences of $\nu$,
\[
\cseq(\nu) :=
\bigl\{ i_1^{(\varepsilon_1)}\dotsm  i_r^{(\varepsilon_r)} \bigr\vert\, \varepsilon_s\in\{1,2\},\sum_s\varepsilon_si_s=\nu\bigr\} . 
\]
By convention we write simply $i_s$ for $i_s^{(1)}$.  Two sequences $\bi\in\cseq(\nu)$ and $\bj\in\cseq(\nu')$
can be concatenated into a sequence $\bi\bj$ in $\cseq(\nu+\nu')$.

\begin{defn}
  The supercategory $\fR(\nu)$ is defined by the following data:
\begin{enumerate}[(a)]
\item The objects of $\fR(\nu)$ are finite formal sums of grading shifts of elements of $\cseq(\nu)$.

\item The morphism space $\Hom_{\fR(\nu)}({\bi},{\bj})$ from ${\bi}$ to ${\bj}$ is the $\bZ$-graded $\Bbbk$-supervector space
  generated by vertical juxtaposition and horizontal juxtaposition of the diagrams below.
  Composition consists of vertical concatenation of diagrams.
  By convention we read diagrams from bottom to top and so, $ab$ consists of stacking the diagram for $a$
  atop the one for $b$. 
Diagrams are equipped with a Morse function that keeps trace of the relative height of the generators. 
We consider isotopy classes of such diagrams that do not change the relative height of generators. 
\end{enumerate}

\subsubsection*{Generators}$\,$\\ 
\n$\bullet$ Simple and double \emph{identities} 
\[
\tikz[very thick,scale=1.5,baseline={([yshift=.8ex]current bounding box.center)}]{
  \draw[mygreen] (1.5,-.5) node[below] {\tiny \textcolor{black}{$i$}}  -- (1.5,.5);
} \in\Hom^0_{\fR(\nu)}(i,i),
\mspace{100mu}
\tikz[very thick,scale=1.5,baseline={([yshift=.8ex]current bounding box.center)}]{
  \draw[doublegreen] (1.5,-.5) node[below] {\tiny $i$}  -- (1.5,.5);
} \in\Hom^0_{\fR(\nu)}(i^{(2)},i^{(2)}) , 
\]
\n$\bullet$ \emph{dots} 
\[
\tikz[very thick,scale=1.5,baseline={([yshift=0ex]current bounding box.center)}]{
  \draw[mygreen] (0,-.5) -- (0,.5)node[midway,fill=mygreen,circle,inner sep=2pt]{};
  \node at (-.2,-.4) {\tiny $i$}; 
} \in\Hom^1_{\fR(\nu)}(i,q^2i) , 
\]
\n$\bullet$ \emph{splitters}
\[
\tikz[very thick,scale=1.5,baseline={([yshift=-.8ex]current bounding box.center)}]{
  \draw[doublegreen] (0,-.4) -- (0,0);
  \draw[mygreen] (-.015,0) .. controls (0,.12) and (-.25,.12) .. (-.3,.5);
  \draw[mygreen] (.015,0) .. controls (0,.12) and (.25,.12) ..  (.3,.5);
\node at (-.2,-.25) {\tiny $i$}; 
}\in\Hom^1_{\fR(\nu)}(i^{(2)},q^{-1}ii) ,
\mspace{80mu}
\tikz[very thick,scale=1.5,baseline={([yshift=-.8ex]current bounding box.center)}]{
  \draw[doublegreen] (0,.4)  -- (0,0);
  \draw[mygreen] (-.015,0) .. controls (0,-.12) and (-.25,-.12) .. (-.3,-.5);
  \draw[mygreen] (.015,0) .. controls (0,-.12) and (.25,-.12) ..  (.3,-.5);
  \node at (-.2,.25) {\tiny $i$};
} \in\Hom^0_{\fR(\nu)}(ii,q^{-1}i^{(2)}) , 
\]
\n$\bullet$ and \emph{crossings}
\begingroup\allowdisplaybreaks
\begin{align*}
\tikz[very thick,baseline={([yshift=.9ex]current bounding box.center)}]{
       \draw[mygreen] (0,-.5) node[below] {\textcolor{black}{\tiny $i$}} .. controls (0,0) and (1,0) .. (1,.5);
       \draw[myblue] (1,-.5) node[below] {\textcolor{black}{\tiny $j$}}  .. controls (1,0) and (0,0) .. (0,.5);
  } &\in\Hom^{p_{ij}}_{\fR(\nu)}(ij,q^{-\brak{\alpha_i,\alpha_j}}ji) , 
&\tikz[very thick,baseline={([yshift=.9ex]current bounding box.center)}]{
       \draw[doublegreen] (0,-.5) node[below] {\tiny $i$} .. controls (0,0) and (1,0) .. (1,.5);
       \draw[myblue] (1,-.5) node[below] {\textcolor{black}{\tiny $j$}}  .. controls (1,0) and (0,0) .. (0,.5);
  } &\in\Hom^0_{\fR(\nu)}(i^{(2)}j,q^{-2\brak{\alpha_i,\alpha_j}}ji^{(2)}) , 
\\
\tikz[very thick,baseline={([yshift=.9ex]current bounding box.center)}]{
       \draw[mygreen] (0,-.5) node[below] {\textcolor{black}{\tiny $i$}} .. controls (0,0) and (1,0) .. (1,.5);
       \draw[doubleblue] (1,-.5) node[below] {\tiny $j$}  .. controls (1,0) and (0,0) .. (0,.5);
  }&\in\Hom^0_{\fR(\nu)}(ij^{(2)},q^{-2\brak{\alpha_i,\alpha_j}}j^{(2)}i) , 
  &\tikz[very thick,baseline={([yshift=.9ex]current bounding box.center)}]{
       \draw[doublegreen] (0,-.5) node[below] {\tiny $i$} .. controls (0,0) and (1,0) .. (1,.5);
       \draw[doubleblue] (1,-.5) node[below] {\tiny $j$}  .. controls (1,0) and (0,0) .. (0,.5);
  } &\in\Hom^0_{\fR(\nu)}(i^{(2)}j^{(2)},q^{-4\brak{\alpha_i,\alpha_j}}j^{(2)}i^{(2)}) . 
\end{align*}
\endgroup


\subsubsection*{Relations}
Morphisms are subject to the local relations~\eqref{eq:chronology} to~\eqref{eq:dpitchforks} below.\\

\n$\bullet$ For all $f,g$:

\begin{equation}\label{eq:chronology}
\tikz[very thick,scale=.6,baseline={([yshift=.7ex]current bounding box.center)}]{
\draw (-1,-.5) -- (1,-.5);\draw (-1, .5) -- (1, .5);
\draw (-1,-.5) -- (-1,.5);\draw (1,-.5) -- (1,.5);
\node at (0,0) {\small $f$};
\draw (-.75,-2.5)node[below]{\tiny $i_1$} -- (-.75,-.5);
\node at (0.05, -2.3) {$\dotsm$};
\draw (.75,-2.5)node[below]{\tiny $i_k$} -- (.75,-.5);
\draw (-.75,1) -- (-.75,.5);
\node at (0.05, .85) {$\dotsm$};
\draw (.75,1) -- (.75,.5);}
\mspace{15mu} 
\tikz[very thick,scale=.6,baseline={([yshift=.8ex]current bounding box.center)}]{
\draw (1.5,-.5) -- (3.5,-.5);\draw (1.5, .5) -- (3.5, .5);
\draw (1.5,-.5) -- (1.5,.5);\draw (3.5,-.5) -- (3.5,.5);
\node at (2.5,0) {\small $g$};
\draw (1.75,-1)node[below]{\tiny $i_1$} -- (1.75,-.5);
\node at (2.55, -.85) {$\dotsm$};
\draw (3.25,-1)node[below]{\tiny $i_k$} -- (3.25,-.5);
\draw (1.75,2.5) -- (1.75,.5);
\node at (2.55, 2.3) {$\dotsm$};
\draw (3.25,2.5) -- (3.25,.5);}
=
\tikz[very thick,scale=.6,baseline={([yshift=.8ex]current bounding box.center)}]{
\draw (-1,-.5) -- (1,-.5);\draw (-1, .5) -- (1, .5);
\draw (-1,-.5) -- (-1,.5);\draw (1,-.5) -- (1,.5);
\node at (0,0) {\small $f$};
\draw (-.75,-1.75)node[below]{\tiny $i_1$} -- (-.75,-.5); \node at (0.05, -1.5) {$\dotsm$};
\draw (.75,-1.75)node[below]{\tiny $i_k$} -- (.75,-.5);
\draw (-.75,1.75) -- (-.75,.5); \node at (0.05, 1.5) {$\dotsm$};
\draw (.75,1.75) -- (.75,.5);}
\mspace{15mu} 
\tikz[very thick,scale=.6,baseline={([yshift=.8ex]current bounding box.center)}]{
\draw (-1,-.5) -- (1,-.5);\draw (-1, .5) -- (1, .5);
\draw (-1,-.5) -- (-1,.5);\draw (1,-.5) -- (1,.5);
\node at (0,0) {\small $g$};
\draw (-.75,-1.75)node[below]{\tiny $i_1$} -- (-.75,-.5); \node at (0.05, -1.5) {$\dotsm$};
\draw (.75,-1.75)node[below]{\tiny $i_k$} -- (.75,-.5);
\draw (-.75,1.75) -- (-.75,.5); \node at (0.05, 1.5) {$\dotsm$};
\draw (.75,1.75) -- (.75,.5);}
= (-1)^{p(f)p(g)}
\tikz[very thick,scale=.6,baseline={([yshift=.8ex]current bounding box.center)}]{
\draw (1.5,-.5) -- (3.5,-.5);\draw (1.5, .5) -- (3.5, .5);
\draw (1.5,-.5) -- (1.5,.5);\draw (3.5,-.5) -- (3.5,.5);
\node at (2.5,0) {\small $f$};
\draw (1.75,-1)node[below]{\tiny $i_1$} -- (1.75,-.5);
\node at (2.55, -.85) {$\dotsm$};
\draw (3.25,-1)node[below]{\tiny $i_k$} -- (3.25,-.5);
\draw (1.75,2.5) -- (1.75,.5);
\node at (2.55, 2.3) {$\dotsm$};
\draw (3.25,2.5) -- (3.25,.5);}
\mspace{15mu}
\tikz[very thick,scale=.6,baseline={([yshift=.8ex]current bounding box.center)}]{
\draw (-1,-.5) -- (1,-.5);\draw (-1, .5) -- (1, .5);
\draw (-1,-.5) -- (-1,.5);\draw (1,-.5) -- (1,.5);
\node at (0,0) {\small $g$};
\draw (-.75,-2.5)node[below]{\tiny $i_1$} -- (-.75,-.5);
\node at (0.05, -2.3) {$\dotsm$};
\draw (.75,-2.5)node[below]{\tiny $i_k$} -- (.75,-.5);
\draw (-.75,1) -- (-.75,.5);
\node at (0.05, .85) {$\dotsm$};
\draw (.75,1) -- (.75,.5);}
\end{equation}

\n$\bullet$ For all $i,j,k\in I$: 
\begin{equation}\label{eq:dotnil}
\tikz[very thick,scale=1.0,baseline={([yshift=.8ex]current bounding box.center)}]{
  \draw[mygreen] (0,-.3)   -- (0,.5)
  node [near start,fill=mygreen,circle,inner sep=2pt]{}
  node [near end,fill=mygreen,circle,inner sep=2pt]{};
  \draw[mygreen] (0,.5) to (0,.65);\draw[mygreen] (0,-.45) node[below] {\textcolor{black}{\tiny $i$}} to (0,-.3);
}
= 0 . 
\end{equation}

\begin{align}\label{eq:klrR2}
\tikz[very thick,xscale=1,yscale=1,baseline={([yshift=.8ex]current bounding box.center)}]{
\draw[mygreen]  +(0,-.75) node[below] {\textcolor{black}{\tiny $i$}} .. controls (0,-.375) and (1,-.375) .. (1,0) .. controls (1,.375) and (0, .375) .. (0,.75);
\draw[myblue]  +(1,-.75) node[below] {\textcolor{black}{\tiny $j$}} .. controls (1,-.375) and (0,-.375) .. (0,0) .. controls (0,.375) and (1, .375) .. (1,.75);
 }
	 \mspace{10mu}=\mspace{10mu}
\begin{cases}
          \mspace{68mu} 0 & \text{ if }i = j,
          \\ \\ 
          \mspace{42mu}
         t_{ij} \tikz[very thick,xscale=1,yscale=0.75,baseline={([yshift=.8ex]current bounding box.center)}]{
	      \draw[mygreen]  (0,-.75) node[below] {\textcolor{black}{\tiny $i$}} -- (0,.75);
	      \draw[myblue]  (1,-.75) node[below] {\textcolor{black}{\tiny $j$}} -- (1,.75);
	 } & \text{ if }\vert i-j \vert > 1,
           \\ \\
         t_{ij} \tikz[very thick,xscale=1,yscale=0.75,baseline={([yshift=.8ex]current bounding box.center)}]{
     \draw[mygreen]  (0,-.75) node[below] {\textcolor{black}{\tiny $i$}} -- (0,.75) node [pos=0.7,fill=mygreen, circle,inner sep=2pt]{};
     \draw[myblue]  (1,-.75) node[below] {\textcolor{black}{\tiny $j$}} -- (1,.75);
	  }
          +\ t_{ji}
           \tikz[very thick,xscale=1,yscale=0.75,baseline={([yshift=.8ex]current bounding box.center)}]{
      \draw[mygreen]  (0,-.75) node[below] {\textcolor{black}{\tiny $i$}} -- (0,.75);
      \draw[myblue]  (1,-.75) node[below] {\textcolor{black}{\tiny $j$}} -- (1,.75) node [pos=0.7,fill=myblue, circle,inner sep=2pt]{};
	  } & \text{ if }\vert i-j\vert = 1,
\end{cases}
\end{align}
\begin{gather}\label{eq:dotslides}
  \allowdisplaybreaks
\tikz[very thick,baseline={([yshift=.9ex]current bounding box.center)}]{
       \draw[mygreen] (0,-.5) node[below] {\textcolor{black}{\tiny $i$}} .. controls (0,0) and (1,0) .. (1,.5);
       \draw[myblue] (1,-.5) node[below] {\textcolor{black}{\tiny $j$}}  .. controls (1,0) and (0,0) .. (0,.5) node [near end,fill=myblue,circle,inner sep=2pt]{};
	}
  \mspace{5mu} =  (-1)^{p_{ij}} 
	\tikz[very thick,baseline={([yshift=.9ex]current bounding box.center)}]{
       \draw[mygreen] (0,-.5) node[below] {\textcolor{black}{\tiny $i$}} .. controls (0,0) and (1,0) .. (1,.5);
       \draw[myblue] (1,-.5) node[below] {\textcolor{black}{\tiny $j$}}  .. controls (1,0) and (0,0) .. (0,.5) node [near start,fill=myblue,circle,inner sep=2pt]{};
	}  
	\mspace{60mu}
	\tikz[very thick,baseline={([yshift=.9ex]current bounding box.center)}]{
       \draw[mygreen] (0,-.5) node[below] {\textcolor{black}{\tiny $i$}} .. controls (0,0) and (1,0) .. (1,.5) node [near start,fill=mygreen,circle,inner sep=2pt]{};
       \draw[myblue] (1,-.5) node[below] {\textcolor{black}{\tiny $j$}}  .. controls (1,0) and (0,0) .. (0,.5);
	}
        \mspace{5mu} = (-1)^{p_{ij}}
	\tikz[very thick,baseline={([yshift=.9ex]current bounding box.center)}]{
       \draw[mygreen] (0,-.5) node[below] {\textcolor{black}{\tiny $i$}} .. controls (0,0) and (1,0) .. (1,.5) node [near end,fill=mygreen,circle,inner sep=2pt]{};
       \draw[myblue] (1,-.5) node[below] {\textcolor{black}{\tiny $j$}}  .. controls (1,0) and (0,0) .. (0,.5);
	} 
\mspace{30mu}\text{ for $i\neq j$,}
\\[3ex]\label{eq:dotjumpneib}
t_{i,i+1}\!\!
\tikz[very thick,baseline={([yshift=.9ex]current bounding box.center)}]{
       \draw[mygreen] (0,-.5) node[below] {\textcolor{black}{\tiny $i+1$}} .. controls (0,0) and (1,0) .. (1,.5);
       \draw[myblue] (1,-.5) node[below] {\textcolor{black}{\tiny $i$}}  .. controls (1,0) and (0,0) .. (0,.5) node [near end,fill=myblue,circle,inner sep=2pt]{};
	}
  \mspace{5mu} + \mspace{5mu} 
  t_{i+1,i}\!\!
  \tikz[very thick,baseline={([yshift=.9ex]current bounding box.center)}]{
       \draw[mygreen] (0,-.5) node[below] {\textcolor{black}{\tiny $i+1$}} .. controls (0,0) and (1,0) .. (1,.5) node [near end,fill=mygreen,circle,inner sep=2pt]{};
       \draw[myblue] (1,-.5) node[below] {\textcolor{black}{\tiny $i$}}  .. controls (1,0) and (0,0) .. (0,.5);
	}
          \mspace{5mu} = \mspace{5mu} 0 
\\[3ex]\label{eq:dotslide-nilH}
 	 \tikz[very thick,baseline={([yshift=.9ex]current bounding box.center)}]{
       \draw[mygreen] (0,-.5) node[below] {\textcolor{black}{\tiny $i$}} .. controls (0,0) and (1,0) .. (1,.5);
       \draw[mygreen] (1,-.5) node[below] {\textcolor{black}{\tiny $i$}}  .. controls (1,0) and (0,0) .. (0,.5) node [near end,fill=mygreen,circle,inner sep=2pt]{};
	}
	\mspace{10mu}+\mspace{10mu} 
	\tikz[very thick,baseline={([yshift=.9ex]current bounding box.center)}]{
       \draw[mygreen] (0,-.5) node[below] {\textcolor{black}{\tiny $i$}} .. controls (0,0) and (1,0) .. (1,.5);
       \draw[mygreen] (1,-.5) node[below] {\textcolor{black}{\tiny $i$}}  .. controls (1,0) and (0,0) .. (0,.5) node [near start,fill=mygreen,circle,inner sep=2pt]{};
	}
	\mspace{10mu}=\mspace{10mu}r_i
	\tikz[very thick,baseline={([yshift=.9ex]current bounding box.center)}]{
        \draw[mygreen] (0,-.5) node[below] {\textcolor{black}{\tiny $i$}} -- (0,.5);
        \draw[mygreen] (1,-.5) node[below] {\textcolor{black}{\tiny $i$}} -- (1,.5);
	}  
	\mspace{10mu}=\mspace{10mu}
	\tikz[very thick,baseline={([yshift=.9ex]current bounding box.center)}]{
       \draw[mygreen] (0,-.5) node[below] {\textcolor{black}{\tiny $i$}} .. controls (0,0) and (1,0) .. (1,.5) node [near start,fill=mygreen,circle,inner sep=2pt]{};
       \draw[mygreen] (1,-.5) node[below] {\textcolor{black}{\tiny $i$}}  .. controls (1,0) and (0,0) .. (0,.5);
	} 
	\mspace{10mu}+\mspace{10mu}
	\tikz[very thick,baseline={([yshift=.9ex]current bounding box.center)}]{
       \draw[mygreen] (0,-.5) node[below] {\textcolor{black}{\tiny $i$}} .. controls (0,0) and (1,0) .. (1,.5) node [near end,fill=mygreen,circle,inner sep=2pt]{};
       \draw (1,-.5)[mygreen] node[below] {\textcolor{black}{\tiny $i$}}  .. controls (1,0) and (0,0) .. (0,.5);
	}
\end{gather}

\begin{gather}\label{eq:klrR3}
  \allowdisplaybreaks
	\tikz[very thick,baseline={([yshift=.9ex]current bounding box.center)}]{
		\draw[mygreen]  +(0,0)node[below] {\textcolor{black}{\tiny $i$}} .. controls (0,0.5) and (2, 1) ..  +(2,2);
		\draw[myblue]  +(2,0)node[below] {\textcolor{black}{\tiny $k$}} .. controls (2,1) and (0, 1.5) ..  +(0,2);
		\draw[myred]  (1,0)node[below] {\textcolor{black}{\tiny $j$}} .. controls (1,0.5) and (0, 0.5) ..  (0,1) .. controls (0,1.5) and (1, 1.5) ..  (1,2);
        }
 	\mspace{5mu}=(-1)^{p_{jk}p_{ik}+p_{jk}p_{ij}+p_{ik}p_{ij}}
	\tikz[very thick,baseline={([yshift=.9ex]current bounding box.center)}]{
		\draw[mygreen]  +(0,0)node[below] {\textcolor{black}{\tiny $i$}} .. controls (0,1) and (2, 1.5) ..  +(2,2);
		\draw[myblue]  +(2,0)node[below] {\textcolor{black}{\tiny $k$}} .. controls (2,.5) and (0, 1) ..  +(0,2);
		\draw[myred]  (1,0)node[below]{\textcolor{black} {\tiny $j$}} .. controls (1,0.5) and (2, 0.5) ..  (2,1) .. controls (2,1.5) and (1, 1.5) ..  (1,2);
        }
         \mspace{40mu}\text{ unless }i=k\text{ and }\vert i-j\vert = 1 ,
         \\[3ex]\label{eq:R3serre}
         \tikz[very thick,baseline={([yshift=.9ex]current bounding box.center)}]{
		\draw[mygreen]  +(0,0)node[below] {\textcolor{black}{\tiny $i$}} .. controls (0,0.5) and (2, 1) ..  +(2,2);
		\draw[mygreen]  +(2,0)node[below] {\textcolor{black}{\tiny $i$}} .. controls (2,1) and (0, 1.5) ..  +(0,2);
		\draw[myblue]  (1,0)node[below] {\textcolor{black}{\tiny $j$}} .. controls (1,0.5) and (0, 0.5) ..  (0,1) .. controls (0,1.5) and (1, 1.5) ..  (1,2);
        }
 	\mspace{10mu}+\mspace{10mu}
	 \tikz[very thick,baseline={([yshift=.9ex]current bounding box.center)}]{
		\draw[mygreen]  +(0,0)node[below] {\textcolor{black}{\tiny $i$}} .. controls (0,1) and (2, 1.5) ..  +(2,2);
		\draw[mygreen]  +(2,0)node[below] {\textcolor{black}{\tiny $i$}} .. controls (2,.5) and (0, 1) ..  +(0,2);
		\draw[myblue]  (1,0)node[below] {\textcolor{black}{\tiny $j$}} .. controls (1,0.5) and (2, 0.5) ..  (2,1) .. controls (2,1.5) and (1, 1.5) .. (1,2);
	 }
         \mspace{10mu}=\mspace{10mu}
	 r_it_{ij}\ \tikz[very thick,baseline={([yshift=.9ex]current bounding box.center)}]{
	 \draw[mygreen] (  0,0) node[below] {\textcolor{black}{\tiny $i$}} -- (0,2);
    	 \draw[myblue] (1,0) node[below] {\textcolor{black}{\tiny $j$}} -- (1,2);
         \draw[mygreen] (2,0) node[below] {\textcolor{black}{\tiny $i$}} -- (2,2);
	 }
         \mspace{50mu}\text{ if }\vert i-j\vert = 1 ,
\end{gather}

\begin{equation}\label{eq:dumbelXing}
\tikz[very thick,scale=1.5,baseline={([yshift=.8ex]current bounding box.center)}]{
  \draw[doublegreen] (0,-.2) -- (0,0);
  \draw[mygreen] (-.015,0) .. controls (0,.12) and (-.25,.12) .. (-.3,.5);
  \draw[mygreen] (.015,0) .. controls (0,.12) and (.25,.12) ..  (.3,.5);
  \draw[mygreen] (-.015,-.2) .. controls (0,-.32) and (-.25,-.32) .. (-.3,-.7)  node[below] {\textcolor{black}{\tiny $j$}} ;
  \draw[mygreen] (.015,-.2) .. controls (0,-.32) and (.25,-.32) ..  (.3,-.7)  node[below] {\textcolor{black}{\tiny $j$}} ;
}
=
\tikz[very thick,scale=1.5,baseline={([yshift=.8ex]current bounding box.center)}]{
 \draw[mygreen] (0.15,-.7) node[below] {\textcolor{black}{\tiny $j$}}  .. controls (0.25,-.1) and (.75,-.1) ..  (.85,.5);
 \draw[mygreen] (.85,-.7) node[below] {\textcolor{black}{\tiny $j$}}  .. controls (.75,-.1) and (.25,-.1) ..  (.15,.5);
 	}
\end{equation}

\begin{equation} \label{eq:digons}
\tikz[very thick,scale=1,baseline={([yshift=.8ex]current bounding box.center)}]{
  \draw[doublegreen] (0,-1)node[below] {\textcolor{black}{\tiny $j$}} -- (0,-.5);
  \draw[doublegreen] (0,.5) -- (0,1);
  \draw[mygreen] (-.015,-.5) .. controls (-.4,-.2) and (-.4,.2) .. (-.015,.5)node [midway,fill=mygreen,circle,inner sep=2pt]{};
  \draw[mygreen] (.015,-.5) .. controls (.4,-.2) and (.4,.2) ..  (.015,.5);
}
=
\tikz[very thick,scale=1,baseline={([yshift=.8ex]current bounding box.center)}]{
  \draw[doublegreen] (0,-1)node[below] {\textcolor{black}{\tiny $j$}} -- (0,1);
}
=
\tikz[very thick,scale=1,baseline={([yshift=.8ex]current bounding box.center)}]{
  \draw[doublegreen] (0,-1)node[below] {\textcolor{black}{\tiny $j$}} -- (0,-.5);
  \draw[doublegreen] (0,.5) -- (0,1);
  \draw[mygreen] (-.015,-.5) .. controls (-.4,-.2) and (-.4,.2) .. (-.015,.5);
  \draw[mygreen] (.015,-.5) .. controls (.4,-.2) and (.4,.2) ..  (.015,.5) node [midway,fill=mygreen,circle,inner sep=2pt]{};
}
\mspace{120mu} 
  \tikz[very thick,scale=1,baseline={([yshift=.8ex]current bounding box.center)}]{
  \draw[doublegreen] (0,-1)node[below] {\textcolor{black}{\tiny $j$}} -- (0,-.5);
  \draw[doublegreen] (0,.5) -- (0,1);
  \draw[mygreen] (-.015,-.5) .. controls (-.4,-.2) and (-.4,.2) .. (-.015,.5);
  \draw[mygreen] (.015,-.5) .. controls (.4,-.2) and (.4,.2) ..  (.015,.5);
}
= 0
\end{equation}

\begin{equation}
\tikz[very thick,scale=1,baseline={([yshift=.8ex]current bounding box.center)}]{
  \draw[doublegreen] (0,-1)node[below] {\tiny $j$} -- (0,-.5);
  \draw[mygreen] (-.015,-.5) .. controls (-.4,-.2) and (-.4,.2) .. (.35,1);
  \draw[mygreen] (.015,-.5) .. controls (.4,-.2) and (.4,.2) ..  (-.35,1);
}
= 0 =
\tikz[very thick,scale=1,baseline={([yshift=.8ex]current bounding box.center)}]{
  \draw[doublegreen] (0,1) -- (0,.5);
  \draw[mygreen] (-.015,.5) .. controls (-.4,.2) and (-.4,-.2) .. (.35,-1)node[below] {\textcolor{black}{\tiny $j$}};
  \draw[mygreen] (.015,.5) .. controls (.4,.2) and (.4,-.2) ..  (-.35,-1) node[below] {\textcolor{black}{\tiny $j$}};
}
\end{equation}

\begingroup\allowdisplaybreaks
\begin{align}\label{eq:lpitchforks}
\tikz[very thick,scale=1.5,baseline={([yshift=.8ex]current bounding box.center)}]{
  \draw[doublegreen] (1,-.5)node[below] {\tiny $k$} -- (0.35,.15);
  \draw[mygreen] (0.35,.15)  .. controls (.2,.2) and (.1,.2) ..  (0,.3);
  \draw[mygreen] (0.35,.15)  .. controls (.3,.3) and (.3,.4) ..  (.2,.5);
  \draw[myblue] (0,-.5) node[below] {\textcolor{black}{\tiny $j$}}   .. controls (0,0) and (1,0) ..  (1,.5);
 	}
&=
\tikz[very thick,scale=1.5,baseline={([yshift=.8ex]current bounding box.center)}]{
  \draw[doublegreen] (1,-.5)node[below] {\tiny $k$} -- (0.7,-.2);
  \draw[mygreen] (0.7,-.2)  .. controls (.1,.1) and (.1,.2) ..  (0,.3);
  \draw[mygreen] (0.7,-.2)  .. controls (.4,.4) and (.3,.4) ..  (.2,.5);
  \draw[myblue] (0,-.5) node[below] {\textcolor{black}{\tiny $j$}}   .. controls (0,0) and (1,0) ..  (1,.5);
 	}
&
\tikz[very thick,scale=1.5,baseline={([yshift=.8ex]current bounding box.center)}]{
  \draw[doublegreen] (-1,.5) -- (-0.35,-.15);
  \draw[mygreen] (-0.35,-.15)  .. controls (-.2,-.2) and (-.1,-.2) ..  (0,-.3);
  \draw[mygreen] (-0.35,-.15)  .. controls (-.3,-.3) and (-.3,-.4) ..  (-.2,-.5)node[below] {\textcolor{black}{\tiny $k$}};
  \draw[myblue] (0,.5) .. controls (0,0) and (-1,0) ..  (-1,-.5) node[below] {\textcolor{black}{\tiny $j$}};
 	}
&=
\tikz[very thick,scale=1.5,baseline={([yshift=.8ex]current bounding box.center)}]{
  \draw[doublegreen] (-1,.5) -- (-0.7,.2);
  \draw[mygreen] (-0.7,.2)  .. controls (-.1,-.1) and (-.1,-.2) ..  (0,-.3);
  \draw[mygreen] (-0.7,.2)  .. controls (-.4,-.4) and (-.3,-.4) ..  (-.2,-.5) node[below] {\textcolor{black}{\tiny $k$}};
  \draw[myblue] (0,.5)  .. controls (0,0) and (-1,0) ..  (-1,-.5)node[below] {\textcolor{black}{\tiny $j$}};
 	}
\\ \label{eq:rpitchforks}
\tikz[very thick,scale=1.5,baseline={([yshift=.8ex]current bounding box.center)}]{
  \draw[doublegreen] (1,.5) -- (0.35,-.15);
  \draw[mygreen] (0.35,-.15)  .. controls (.2,-.2) and (.1,-.2) ..  (0,-.3);
  \draw[mygreen] (0.35,-.15)  .. controls (.3,-.3) and (.3,-.4) ..  (.2,-.5)node[below] {\textcolor{black}{\tiny $k$}};
  \draw[myblue] (0,.5)  .. controls (0,0) and (1,0) ..  (1,-.5) node[below] {\textcolor{black}{\tiny $j$}};
 	}
&=
\tikz[very thick,scale=1.5,baseline={([yshift=.8ex]current bounding box.center)}]{
  \draw[doublegreen] (1,.5) -- (0.7,.2);
  \draw[mygreen] (0.7,.2)  .. controls (.1,-.1) and (.1,-.2) ..  (0,-.3);
  \draw[mygreen] (0.7,.2)  .. controls (.4,-.4) and (.3,-.4) ..  (.2,-.5)node[below] {\textcolor{black}{\tiny $k$}};
  \draw[myblue] (0,.5) .. controls (0,0) and (1,0) ..  (1,-.5)node[below] {\textcolor{black}{\tiny $j$}};
 	}
&
\tikz[very thick,scale=1.5,baseline={([yshift=.8ex]current bounding box.center)}]{
  \draw[doublegreen] (-1,-.5)node[below] {\tiny $k$} -- (-0.35,.15);
  \draw[mygreen] (-0.35,.15)  .. controls (-.2,.2) and (-.1,.2) ..  (0,.3);
  \draw[mygreen] (-0.35,.15)  .. controls (-.3,.3) and (-.3,.4) ..  (-.2,.5);
  \draw[myblue] (0,-.5) node[below] {\textcolor{black}{\tiny $j$}} .. controls (0,0) and (-1,0) ..  (-1,.5);
 	}
&=
\tikz[very thick,scale=1.5,baseline={([yshift=.8ex]current bounding box.center)}]{
  \draw[doublegreen] (-1,-.5)node[below] {\tiny $k$} -- (-0.7,-.2);
  \draw[mygreen] (-0.7,-.2)  .. controls (-.1,.1) and (-.1,.2) ..  (0,.3);
  \draw[mygreen] (-0.7,-.2)  .. controls (-.4,.4) and (-.3,.4) ..  (-.2,.5);
  \draw[myblue] (0,-.5) node[below] {\textcolor{black}{\tiny $j$}} .. controls (0,0) and (-1,0) ..  (-1,.5);
 	}
\\ \label{eq:dpitchforks}
\tikz[very thick,scale=1.5,baseline={([yshift=.8ex]current bounding box.center)}]{
  \draw[doublegreen] (1,-.5)node[below] {\tiny $k$} -- (0.35,.15);
  \draw[mygreen] (0.35,.15)  .. controls (.2,.2) and (.1,.2) ..  (0,.3);
  \draw[mygreen] (0.35,.15)  .. controls (.3,.3) and (.3,.4) ..  (.2,.5);
  \draw[doubleblue] (0,-.5) node[below] {\tiny $j$}   .. controls (0,0) and (1,0) ..  (1,.5);
 	}
&=
\tikz[very thick,scale=1.5,baseline={([yshift=.8ex]current bounding box.center)}]{
  \draw[doublegreen] (1,-.5)node[below] {\tiny $k$} -- (0.7,-.2);
  \draw[mygreen] (0.7,-.2)  .. controls (.1,.1) and (.1,.2) ..  (0,.3);
  \draw[mygreen] (0.7,-.2)  .. controls (.4,.4) and (.3,.4) ..  (.2,.5);
  \draw[doubleblue] (0,-.5) node[below] {\tiny $j$}   .. controls (0,0) and (1,0) ..  (1,.5);
 	}
&
\tikz[very thick,scale=1.5,baseline={([yshift=.8ex]current bounding box.center)}]{
  \draw[doublegreen] (-1,.5) -- (-0.35,-.15);
  \draw[mygreen] (-0.35,-.15)  .. controls (-.2,-.2) and (-.1,-.2) ..  (0,-.3);
  \draw[mygreen] (-0.35,-.15)  .. controls (-.3,-.3) and (-.3,-.4) ..  (-.2,-.5)node[below] {\textcolor{black}{\tiny $k$}};
  \draw[doubleblue] (0,.5) .. controls (0,0) and (-1,0) ..  (-1,-.5) node[below] {\tiny $j$};
 	}
&=
\tikz[very thick,scale=1.5,baseline={([yshift=.8ex]current bounding box.center)}]{
  \draw[doublegreen] (-1,.5) -- (-0.7,.2);
  \draw[mygreen] (-0.7,.2)  .. controls (-.1,-.1) and (-.1,-.2) ..  (0,-.3);
  \draw[mygreen] (-0.7,.2)  .. controls (-.4,-.4) and (-.3,-.4) ..  (-.2,-.5) node[below] {\textcolor{black}{\tiny $k$}};
  \draw[doubleblue] (0,.5)  .. controls (0,0) and (-1,0) ..  (-1,-.5)node[below] {\tiny $j$};
 	}
\end{align}    
\endgroup

This ends the definition of $\fR(\nu)$. 
\end{defn}

In~\fullref{sec:polyaction} below we show that $\fR(\nu)$ acts on a supercommutative ring.

\begin{defn}
We define the monoidal supercategory 
\[
\fR = \bigoplus\limits_{\nu\in\bN_0[I]}\fR(\nu) , 
\]
the monoidal structure given by horizontal composition of diagrams.  
\end{defn}

\subsection{Further relations in $\fR(\nu)$}
We have several consequences of the defining relations. 

\begin{lem}
 For all $i\in I$, 
\begingroup\allowdisplaybreaks
 \begin{align}\label{eq:dotjumps}
\tikz[very thick,baseline={([yshift=.9ex]current bounding box.center)}]{
        \draw[mygreen] (0,-.5) node[below] {\textcolor{black}{\tiny $i$}} -- (0,.5)node [midway,fill=mygreen,circle,inner sep=2pt]{};
        \draw[mygreen] (1,-.5) node[below] {\textcolor{black}{\tiny $i$}} -- (1,.5);
	}  
	\mspace{10mu}-\mspace{10mu}
	\tikz[very thick,baseline={([yshift=.9ex]current bounding box.center)}]{
        \draw[mygreen] (0,-.5) node[below] {\textcolor{black}{\tiny $i$}} -- (0,.5);
        \draw[mygreen] (1,-.5) node[below] {\textcolor{black}{\tiny $i$}} -- (1,.5)node [midway,fill=mygreen,circle,inner sep=2pt]{};
        } &=\ 0 , 
\\ \label{eq:threestrands}
\tikz[very thick,baseline={([yshift=.9ex]current bounding box.center)}]{
	 \draw[mygreen] (0,-.6) node[below] {\textcolor{black}{\tiny $i$}} -- (0,.6);
    	 \draw[mygreen] (.75,-.6) node[below] {\textcolor{black}{\tiny $i$}} -- (.75,.6);
         \draw[mygreen] (1.5,-.6) node[below] {\textcolor{black}{\tiny $i$}} -- (1.5,.6);
	 } &=\ 0 ,
\\ \label{eq:morethanthreestrands}
\tikz[very thick,scale=1.5,baseline={([yshift=.8ex]current bounding box.center)}]{
  \draw[doublegreen] (.5,-.5) node[below] {\tiny $i$}  -- (.5,.5);
  \draw[mygreen] (0,-.5) node[below] {\textcolor{black}{\tiny $i$}}  -- (0,.5);
	}
=
\tikz[very thick,scale=1.5,baseline={([yshift=.8ex]current bounding box.center)}]{
  \draw[doublegreen] (0,-.5) node[below] {\tiny $i$}  -- (0,.5);
  \draw[mygreen] (.5,-.5) node[below] {\textcolor{black}{\tiny $i$}}  -- (.5,.5);
	}
=
\tikz[very thick,scale=1.5,baseline={([yshift=.8ex]current bounding box.center)}]{
  \draw[doublegreen] (.5,-.5) node[below] {\tiny $i$}  -- (.5,.5);
  \draw[doublegreen] (0,-.5) node[below] {\tiny $i$}  -- (0,.5);
}
&=\ 0 . 
 \end{align}
 \endgroup
\end{lem}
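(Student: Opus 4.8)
The plan is to prove the three identities in the order \eqref{eq:dotjumps}, \eqref{eq:morethanthreestrands}, \eqref{eq:threestrands}, each feeding the next, exploiting throughout that $r_i\in\Bbbk^\times$ and that dots are nilpotent. First I would establish the dot-jump \eqref{eq:dotjumps}. Writing $\psi$ for the same-colour crossing on $ii$ and $x_1,x_2$ for the dots on the left and right strand, the nilHecke relation \eqref{eq:dotslide-nilH} reads $x_1\psi+\psi x_2=r_i\,\mathrm{id}$. Composing this equality with $x_1$ on top and, separately, with $x_2$ on the bottom, the summand carrying a repeated dot dies by \eqref{eq:dotnil}, leaving $x_1\psi x_2=r_ix_1$ and $x_1\psi x_2=r_ix_2$. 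Hence $r_i(x_1-x_2)=0$, and invertibility of $r_i$ gives $x_1=x_2$.

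Next I would prove \eqref{eq:morethanthreestrands}, whose decisive case is $\mathrm{id}_{i^{(2)}i}=0$. Tensoring the digon relation \eqref{eq:digons} by $\mathrm{id}_i$ presents this identity as $M\,x_1\,S$, where $S$ splits the double into the two left strands of a three-strand diagram, $M$ merges them back, and $x_1$ is a dot on that left leg. Using \eqref{eq:dotjumps} I replace $x_1$ by the dot $x_3$ on the third, spectator strand, which meets neither vertex; sliding it past $S$ through \eqref{eq:chronology} (the resulting sign is immaterial) turns the diagram into $\pm (MS)\,x$, and the dotless digon $MS$ vanishes by \eqref{eq:digons}. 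Thus $\mathrm{id}_{i^{(2)}i}=0$, the mirror argument gives $\mathrm{id}_{ii^{(2)}}=0$, and $\mathrm{id}_{i^{(2)}i^{(2)}}=0$ follows by splitting one double and factoring the identity through the zero object $i^{(2)}ii$.

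Finally \eqref{eq:threestrands} falls out: by the dumbbell relation \eqref{eq:dumbelXing} the crossing $\psi_1$ of strands $1,2$ equals $S\,M$ with a double strand in the middle, so it factors as $iii\to i^{(2)}i\to iii$; since $\mathrm{id}_{i^{(2)}i}=0$ the middle object is zero and $\psi_1=0$, whereupon \eqref{eq:dotslide-nilH} gives $r_i\,\mathrm{id}_{iii}=x_1\psi_1+\psi_1x_2=0$, i.e. $\mathrm{id}_{iii}=0$. The step I expect to be the real obstacle is recognising that one must route through the double strand at all: with only dots and crossings on simple strands — even granting $x^2=0$, $\psi^2=0$, the signed braid move \eqref{eq:klrR3}, and $x_1=x_2=x_3$ — one can still build a nonzero two-dimensional model, so the splitter and digon relations \eqref{eq:dumbelXing}, \eqref{eq:digons} are genuinely indispensable to force the vanishing.
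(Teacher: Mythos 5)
Your proof of all three identities is correct, but it takes a genuinely different route from the paper for the last two, and in the opposite logical order. For \eqref{eq:dotjumps} you argue exactly as the paper does: compose \eqref{eq:dotslide-nilH} with an extra dot and kill the repeated-dot terms by \eqref{eq:dotnil}. After that, the paper proves \eqref{eq:threestrands} \emph{first}, entirely inside the simple-strand calculus: it expands $\mathrm{id}$ on the rightmost two strands by \eqref{eq:dotslide-nilH}, inserts that relation a second time (using $\psi^2=0$ from \eqref{eq:klrR2} to discard the cross terms) to produce double-crossing diagrams, jumps the middle dot onto the spectator strand by \eqref{eq:dotjumps} and \eqref{eq:chronology}, and is left with diagrams containing a dotless double crossing, which vanish by \eqref{eq:klrR2}; the relations \eqref{eq:morethanthreestrands} are then deduced from \eqref{eq:digons} together with \eqref{eq:threestrands}. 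You reverse this: you prove \eqref{eq:morethanthreestrands} first, by jumping the digon dot of \eqref{eq:digons} onto the spectator strand and sliding it off via \eqref{eq:chronology} so that the dotless digon remains, and you then obtain \eqref{eq:threestrands} by writing the crossing as the merge--split composite of \eqref{eq:dumbelXing}, so that $\psi\otimes\mathrm{id}$ factors through $i^{(2)}i$, whose identity you have just shown to vanish, whence \eqref{eq:dotslide-nilH} forces $r_i\,\mathrm{id}_{iii}=0$. Both arguments are valid; yours is arguably cleaner, since every vanishing is reduced to the principle that a morphism factoring through an object with zero identity is zero, whereas the paper's has the virtue of establishing \eqref{eq:threestrands} inside the sub-superalgebra $\overline{\fR}(\nu)$ of simple-strand diagrams, with no appeal to the thick generators at all.

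That last point is exactly where your closing remark goes wrong. You claim the splitter and digon relations are \emph{indispensable} for \eqref{eq:threestrands}, on the grounds that a nonzero two-dimensional model exists satisfying $x^2=0$, $\psi^2=0$, the braid move \eqref{eq:klrR3} and $x_1=x_2=x_3$. But this list omits the nilHecke-type relation \eqref{eq:dotslide-nilH}, which is a defining relation on simple strands; any such model must violate it (indeed $x_1\psi+\psi x_2=r_i\,\mathrm{id}$ with $r_i\in\Bbbk^\times$ is incompatible with your model, which is why it can exist). Once \eqref{eq:dotslide-nilH} is granted, the paper's computation derives \eqref{eq:threestrands} from \eqref{eq:chronology}, \eqref{eq:dotnil}, \eqref{eq:klrR2} and \eqref{eq:dotslide-nilH} alone. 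So routing through the double strand is a convenience of your argument, not a necessity of the problem.
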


\begin{proof}
By~\eqref{eq:dotnil} and~\eqref{eq:dotslide-nilH}, 
  \[
       r_i^{-1}\tikz[very thick,baseline={([yshift=.9ex]current bounding box.center)}]{
       \draw[mygreen] (0,-.5) node[below] {\textcolor{black}{\tiny $i$}} .. controls (0,0) and (1,0) .. (1,.5);
       \draw[mygreen] (1,-.5) node[below] {\textcolor{black}{\tiny $i$}}  .. controls (1,0) and (0,0) .. (0,.5);
       \node[fill=mygreen,circle,inner sep=2pt] at (0.06,.3) {};
       \node[fill=mygreen,circle,inner sep=2pt] at (0.28,.12) {};
	}
	\mspace{10mu}-\mspace{10mu}
	 r_i^{-1}\tikz[very thick,baseline={([yshift=.9ex]current bounding box.center)}]{
       \draw[mygreen] (0,-.5) node[below] {\textcolor{black}{\tiny $i$}} .. controls (0,0) and (1,0) .. (1,.5);
       \draw[mygreen] (1,-.5) node[below] {\textcolor{black}{\tiny $i$}}  .. controls (1,0) and (0,0) .. (0,.5);
       \node[fill=mygreen,circle,inner sep=2pt] at (0.94,-.32) {};
       \node[fill=mygreen,circle,inner sep=2pt] at (0.76,-.14) {};
  	} 
    \mspace{10mu}=\mspace{10mu} 
	\tikz[very thick,baseline={([yshift=.9ex]current bounding box.center)}]{
        \draw[mygreen] (0,-.5) node[below] {\textcolor{black}{\tiny $i$}} -- (0,.5)node [midway,fill=mygreen,circle,inner sep=2pt]{};;
        \draw[mygreen] (1,-.5) node[below] {\textcolor{black}{\tiny $i$}} -- (1,.5);
	}
	\mspace{10mu}-\mspace{10mu}
        \tikz[very thick,baseline={([yshift=.9ex]current bounding box.center)}]{
        \draw[mygreen] (0,-.5) node[below] {\textcolor{black}{\tiny $i$}} -- (0,.5);
        \draw[mygreen] (1,-.5) node[below] {\textcolor{black}{\tiny $i$}} -- (1,.5)node [midway,fill=mygreen,circle,inner sep=2pt]{};;
	}\mspace{10mu}=\mspace{10mu} 0 , 
        \]
which proves~\eqref{eq:dotjumps}. 

Also,
\begingroup\allowdisplaybreaks
\begin{align*}
\tikz[very thick,xscale=1,yscale=1,baseline={([yshift=.8ex]current bounding box.center)}]{
\draw[mygreen]  (-.75,-.75) node[below] {\textcolor{black}{\tiny $i$}} -- (-.75,.75);
\draw[mygreen]  (0,-.75) node[below] {\textcolor{black}{\tiny $i$}} -- (0,.75);
\draw[mygreen]  (.75,-.75) node[below] {\textcolor{black}{\tiny $i$}} -- (.75,.75);
	 }
\mspace{10mu} &= \mspace{10mu}
\tikz[very thick,xscale=1,yscale=1,baseline={([yshift=.8ex]current bounding box.center)}]{
\draw[mygreen]  (-.75,-.75) node[below] {\textcolor{black}{\tiny $i$}} -- (-.75,.75);
\draw[mygreen] (0,-.75) node[below] {\textcolor{black}{\tiny $i$}} .. controls (0,0) and (.75,0) .. (.75,.75)node [near start,fill=mygreen,circle,inner sep=2pt]{};;
\draw[mygreen] (.75,-.75) node[below] {\textcolor{black}{\tiny $i$}}  .. controls (.75,0) and (0,0) .. (0,.75);
}
\mspace{10mu}+\mspace{10mu}
\tikz[very thick,xscale=1,yscale=1,baseline={([yshift=.8ex]current bounding box.center)}]{
\draw[mygreen]  (-.75,-.75) node[below] {\textcolor{black}{\tiny $i$}} -- (-.75,.75);
\draw[mygreen] (0,-.75) node[below] {\textcolor{black}{\tiny $i$}} .. controls (0,0) and (.75,0) .. (.75,.75)node [near end,fill=mygreen,circle,inner sep=2pt]{};;
\draw[mygreen] (.75,-.75) node[below] {\textcolor{black}{\tiny $i$}}  .. controls (.75,0) and (0,0) .. (0,.75);
  }
\mspace{10mu}=\mspace{10mu}
\tikz[very thick,xscale=1,yscale=1,baseline={([yshift=.8ex]current bounding box.center)}]{
\draw[mygreen]  (-.75,-.75) node[below] {\textcolor{black}{\tiny $i$}} -- (-.75,.75);
  \draw[mygreen]  +(0,-.75) node[below] {\textcolor{black}{\tiny $i$}} .. controls (0,-.375) and (.75,-.375) .. (.75,0)node [near start,fill=mygreen,circle,inner sep=2pt]{} .. controls (.75,.375) and (0, .375) .. (0,.75);
  \draw[mygreen]  +(.75,-.75) node[below] {\textcolor{black}{\tiny $i$}} .. controls (.75,-.375) and (0,-.375) .. (0,0) .. controls (0,.375) and (.75, .375) .. (.75,.75);
    \node[fill=mygreen,circle,inner sep=2pt] at (0,0) {};
	 }
\mspace{10mu}+\mspace{10mu}
\tikz[very thick,xscale=1,yscale=1,baseline={([yshift=.8ex]current bounding box.center)}]{
\draw[mygreen]  (-.75,-.75) node[below] {\textcolor{black}{\tiny $i$}} -- (-.75,.75);
  \draw[mygreen]  +(0,-.75) node[below] {\textcolor{black}{\tiny $i$}} .. controls (0,-.375) and (.75,-.375) .. (.75,0) .. controls (.75,.375) and (0, .375) .. (0,.75);
  \draw[mygreen]  +(.75,-.75) node[below] {\textcolor{black}{\tiny $i$}} .. controls (.75,-.375) and (0,-.375) .. (0,0) .. controls (0,.375) and (.75, .375) .. (.75,.75)node [near end,fill=mygreen,circle,inner sep=2pt]{};
  \node[fill=mygreen,circle,inner sep=2pt] at (0,0) {};
}
\\
& = \mspace{10mu}
\tikz[very thick,xscale=1,yscale=1,baseline={([yshift=.8ex]current bounding box.center)}]{
  \draw[mygreen]  (-.75,-.75) node[below] {\tiny $i$} -- (-.75,.75)
   node[midway, fill=mygreen,circle,inner sep=2pt]{};;
  \draw[mygreen]  +(0,-.75) node[below] {\tiny $i$} .. controls (0,-.375) and (.75,-.375) .. (.75,0)node [near start,fill=mygreen,circle,inner sep=2pt]{} .. controls (.75,.375) and (0, .375) .. (0,.75);
  \draw[mygreen]  +(.75,-.75) node[below] {\tiny $i$} .. controls (.75,-.375) and (0,-.375) .. (0,0) .. controls (0,.375) and (.75, .375) .. (.75,.75);
	 }
\mspace{10mu}+\mspace{10mu}
\tikz[very thick,xscale=1,yscale=1,baseline={([yshift=.8ex]current bounding box.center)}]{
\draw[mygreen]  (-.75,-.75) node[below] {\textcolor{black}{\tiny $i$}} -- (-.75,.75)
   node[midway, fill=mygreen,circle,inner sep=2pt]{};;
\draw[mygreen]  +(0,-.75) node[below] {\textcolor{black}{\tiny $i$}} .. controls (0,-.375) and (.75,-.375) .. (.75,0) .. controls (.75,.375) and (0, .375) .. (0,.75);
  \draw[mygreen]  +(.75,-.75) node[below] {\textcolor{black}{\tiny $i$}} .. controls (.75,-.375) and (0,-.375) .. (0,0) .. controls (0,.375) and (.75, .375) .. (.75,.75)node [near end,fill=mygreen,circle,inner sep=2pt]{};
}\mspace{10mu}=\mspace{10mu} 0 ,
\end{align*}
\endgroup
and this proves~\eqref{eq:threestrands}. 
Relations~\eqref{eq:morethanthreestrands} are an easy consequence of~\eqref{eq:digons} together with~\eqref{eq:threestrands}. 
\end{proof}

\begin{lem}\label{lem:dotjumpdlb}
For all $i,j\in I$ with $\vert i-j\vert =1$,
\[
\tikz[very thick,scale=1.2,baseline={([yshift=.8ex]current bounding box.center)}]{
  \draw[mygreen] (.4,-.4) node[below] {\textcolor{black}{\tiny $i$}} --  (.4,.4);
  \draw[mygreen] (-.4,-.4)node[below] {\textcolor{black}{\tiny $i$}}  --  (-.4,.4)node[midway,fill=mygreen,circle,inner sep=2pt]{};
  \draw[doubleblue] (0,-.4)node[below] {\tiny $j$}  -- (0,.4);}
=
\tikz[very thick,scale=1.2,baseline={([yshift=.8ex]current bounding box.center)}]{
  \draw[mygreen] (.4,-.4) node[below] {\textcolor{black}{\tiny $i$}}  --  (.4,.4)node[midway,fill=mygreen,circle,inner sep=2pt]{};
  \draw[mygreen] (-.4,-.4)node[below] {\textcolor{black}{\tiny $i$}}  --  (-.4,.4);
  \draw[doubleblue] (0,-.4)node[below] {\textcolor{black}{\tiny $j$}}  -- (0,.4);}
\]
\end{lem}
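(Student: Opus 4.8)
The plan is to trade the statement about the double strand $j^{(2)}$ for a statement about single strands, where the nilHecke-type relations are available, and then to recombine. Concretely, I would first use the digon relation~\eqref{eq:digons} to rewrite the identity on the central $j^{(2)}$-strand as a splitter at the bottom, a dot on one of the two resulting single $j$-strands, and a merger at the top; performing this same rewriting on both sides of the claimed identity turns each side into a diagram built entirely from single strands in the pattern $i\,j\,j\,i$ (capped by a splitter/merger on the two $j$'s), the only difference being whether the remaining $i$-dot sits on the leftmost or the rightmost strand. The target then becomes: in this capped single-strand context, a dot on the left $i$ equals a dot on the right $i$.

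To transport the dot across, I would exploit that $\vert i-j\vert=1$, so the neighbour relations apply. Since the strands in the picture are parallel, crossings must first be introduced: I would slide the dotted $i$-strand toward the $j$-strands using the crossing generators, use the neighbour dot relation~\eqref{eq:dotjumpneib} to convert the dot on the outgoing $i$-strand into a dot on the adjacent $j$-strand (with coefficient $-t_{ji}/t_{ij}$), slide that dot along the $j$-block to the far side via the clean dot-slide~\eqref{eq:dotslides} (which for neighbours contributes only the sign $(-1)^{p_{ij}}$), and convert it back onto the right $i$-strand by a second application of~\eqref{eq:dotjumpneib}. The crossings are created and resolved using the pitchfork and dumbbell relations~\eqref{eq:lpitchforks}--\eqref{eq:dpitchforks} and~\eqref{eq:dumbelXing}. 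The reason the double strand is essential is that the error terms generated when a strand is passed across a neighbour (namely the dot corrections coming from~\eqref{eq:klrR2}) either pile two dots on the same $i$-strand, and so die by dot-nilpotency~\eqref{eq:dotnil}, or produce an extra dot on a $j$-strand that is reabsorbed into $\id_{j^{(2)}}$ when the two $j$-strands are recombined through the digon relation~\eqref{eq:digons}. At the end I would re-merge the $j$-strands, landing on the diagram with the dot on the right $i$-strand.

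The hard part will be the bookkeeping in the middle step. One must track the invertible scalars $t_{ij},t_{ji},r_i$ and, more delicately, the parity signs $(-1)^{p_{ij}}$, which are asymmetric in $i$ and $j$: recall $p_{ij}=1$ exactly when $i=j$ or $i=j+1$, so the two subcases $j=i+1$ and $j=i-1$ behave differently and may need to be checked separately. The whole argument rests on showing that the \emph{unwanted} contributions are precisely the ones annihilated by~\eqref{eq:dotnil} and by the vanishing of the dotless digon in~\eqref{eq:digons}, so that only the transported dot survives. The decisive point in my favour is that~\eqref{eq:dotjumpneib} is tailored to kill exactly the combination $t_{ij}(\text{dot on }i)+t_{ji}(\text{dot on }j)$ produced by~\eqref{eq:klrR2}; I would therefore try to organize the computation so that every correction term appears in this form, invoking~\eqref{eq:dotjumpneib} to discard it, and fall back on an explicit sign analysis only if the parities refuse to line up. A useful independent consistency check, once the diagrammatic argument is assembled, is to evaluate both sides under the action of $\fR(\nu)$ on the supercommutative ring constructed in~\fullref{sec:polyaction}.
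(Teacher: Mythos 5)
There is a genuine gap, and it sits exactly where you locate the difficulty: the transport step. The relation \eqref{eq:dotjumpneib} only exchanges dots between the two outputs of an \emph{existing} crossing, and nothing in your toolkit can legitimately create the crossings you need. Crossings between neighbouring colours are not invertible in $\fR(\nu)$: the only relation that produces a crossing pattern from parallel strands is \eqref{eq:klrR2} read backwards, and it does so by consuming the dot, $x_i=t_{ij}^{-1}\psi^2-(t_{ji}/t_{ij})x_j$. Running this inside the opened digon, the $x_j$-term indeed dies against the digon dot by \eqref{eq:dotnil}, as you predict, but the $\psi^2$-term is not an error term you can discard: by the same relation \eqref{eq:klrR2} it collapses right back to $t_{ij}$ times the dotted-digon diagram you started from, so the computation is circular. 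More fundamentally, every move you cite — \eqref{eq:klrR2}, \eqref{eq:dotjumpneib}, \eqref{eq:dotslides}, and the pitchfork/dumbbell relations \eqref{eq:lpitchforks}--\eqref{eq:dpitchforks}, \eqref{eq:dumbelXing}, which only push existing crossings past splitters or trade a merger-splitter composite for a $jj$-crossing — acts locally on the left $i$-strand and the adjacent $j$-strand; none of them ever couples the left half of the picture to the right $i$-strand, so no sequence of these moves can prove an identity relating the two $i$-strands. The paper escapes this trap with an ingredient your proposal never invokes: the Serre relation \eqref{eq:R3serre}. Its proof builds an auxiliary diagram in which the two $i$-strands cross \emph{each other} and weave through the opened digon; inside the digon they run parallel, so the dot jumps between them for free by the previously established relation \eqref{eq:dotjumps} (between same-coloured strands, not via the $j$'s); evaluating this single diagram in two ways — pushing the $ii$-crossing to the left or to the right with \eqref{eq:R3serre}, then simplifying with \eqref{eq:klrR2}, \eqref{eq:dotslides} and \eqref{eq:digons} — yields $-r_it_{ij}t_{ji}$ times each side of the claimed identity, and invertibility of $-r_it_{ij}t_{ji}$ finishes the proof.

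A second, smaller error is the bookkeeping claim that a stray $j$-dot is ``reabsorbed into $\id_{j^{(2)}}$'' by \eqref{eq:digons}. The identity of $j^{(2)}$ is a digon carrying exactly one dot (the dotless digon is zero), so an extra $j$-dot produces a digon with two dots; this is an endomorphism of $j^{(2)}$ of $q$-degree $2$, which cannot equal the identity for degree reasons. Only extra dots landing on the same single strand as an existing dot vanish, by \eqref{eq:dotnil}. So even if the transport mechanism were available, your error terms would have to be herded onto a single strand, which again requires a way of moving dots across the diagram — precisely the point at issue.
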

\begin{proof}
  Start from the equality
  \[
  \tikz[very thick,scale=1.25,baseline={([yshift=.8ex]current bounding box.center)}]{
  \draw[doubleblue] (0,-.75)node[below] {\textcolor{black}{\tiny $j$}} -- (0,-.5);
  \draw[doubleblue] (0,.5) -- (0,.75);
  \draw[myblue] (-.02,-.5) .. controls (-.45,-.2) and (-.45,.2) .. (-.02,.5);
  \draw[myblue] (.02,-.5) .. controls (.45,-.2) and (.45,.2) ..  (.02,.5);
  \draw[mygreen] (-.5,-.75) node[below] {\textcolor{black}{\tiny $i$}}   .. controls (-.4,-.2) and (.5,0) ..  (.5,.75);
  \draw[mygreen] (.5,-.75) node[below] {\textcolor{black}{\tiny $i$}}  .. controls (.4,.2) and (-.5,0) ..  (-.5,.75);
  \node[fill=mygreen,circle,inner sep=1.8pt] at (-.075,.09) {};
  }
  =
  \tikz[very thick,scale=1.25,baseline={([yshift=.8ex]current bounding box.center)}]{
  \draw[doubleblue] (0,-.75)node[below] {\tiny $j$} -- (0,-.5);
  \draw[doubleblue] (0,.5) -- (0,.75);
  \draw[myblue] (-.02,-.5) .. controls (-.45,-.2) and (-.45,.2) .. (-.02,.5);
  \draw[myblue] (.02,-.5) .. controls (.45,-.2) and (.45,.2) ..  (.02,.5);
  \draw[mygreen] (-.5,-.75) node[below] {\textcolor{black}{\tiny $i$}}   .. controls (-.4,-.2) and (.5,0) ..  (.5,.75);
  \draw[mygreen] (.5,-.75) node[below] {\textcolor{black}{\tiny $i$}}  .. controls (.4,.2) and (-.5,0) ..  (-.5,.75);
  \node[fill=mygreen,circle,inner sep=1.8pt] at (.2,.09) {};
  }  
  \]
  Sliding up the dot on the left-hand side using~\eqref{eq:dotslides} and~\eqref{eq:chronology}, followed
  by~\eqref{eq:R3serre} to pass the $ii$-crossing to the left, and simplifying using
  \eqref{eq:klrR2} and \eqref{eq:digons} gives
  \[
  -r_it_{ij}t_{ji}
  \tikz[very thick,scale=1.2,baseline={([yshift=.8ex]current bounding box.center)}]{
  \draw[mygreen] (.4,-.4) node[below] {\textcolor{black}{\tiny $i$}}  --  (.4,.4);
  \draw[mygreen] (-.4,-.4)node[below] {\textcolor{black}{\tiny $i$}}  --  (-.4,.4)node[midway,fill=mygreen,circle,inner sep=2pt]{};
  \draw[doubleblue] (0,-.4)node[below] {\tiny $j$}  -- (0,.4);}
  \]
  Proceeding similarly on the right-hand side, but  sliding the $ii$-crossing to the right gives
  \[
  -r_it_{ij}t_{ji}
   \tikz[very thick,scale=1.2,baseline={([yshift=.8ex]current bounding box.center)}]{
  \draw[mygreen] (.4,-.4) node[below] {\textcolor{black}{\tiny $i$}}  --  (.4,.4)node[midway,fill=mygreen,circle,inner sep=2pt]{};
  \draw[mygreen] (-.4,-.4)node[below] {\textcolor{black}{\tiny $i$}}  --  (-.4,.4);
  \draw[doubleblue] (0,-.4)node[below] {\tiny $j$}  -- (0,.4);}
   \]
   and the claim follows. 
\end{proof}

\begin{lem} 
For all $i,j\in I$ with $\vert i-j\vert =1$,
  \[ 
  \tikz[very thick,baseline={([yshift=.9ex]current bounding box.center)}]{
       \draw[doublegreen] (0,-.5) node[below] {\tiny $i$} .. controls (0,0) and (1,0) .. (1,.5);
       \draw[doubleblue] (1,-.5) node[below] {\tiny $j$}  .. controls (1,0) and (0,0) .. (0,.5);
  } =\ 0 . 
\]
\end{lem}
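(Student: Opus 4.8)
The plan is to reduce the statement to a relation among the \emph{simple} (single-strand) generators, where the Serre relation~\eqref{eq:R3serre} can be brought to bear. The key structural input is that the digon relation~\eqref{eq:digons} exhibits $i^{(2)}$ as a retract of the object $ii$: writing $s_i$ for the splitter $i^{(2)}\to ii$, $m_i$ for the merge $ii\to i^{(2)}$, and $d_i$ for a dot on one of the two legs, \eqref{eq:digons} says $m_i\circ d_i\circ s_i=\id_{i^{(2)}}$ while the dotless composite $m_i\circ s_i=0$; the same holds with $j$ in place of $i$. Consequently, to prove that the double--double crossing $X$ vanishes it suffices to show that $X$ becomes zero after splitting its outgoing $i^{(2)}$- and $j^{(2)}$-strands, since $X=(\id\otimes m_i d_i)\circ(\id\otimes s_i)\circ X$ and symmetrically for $j$.

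First I would split the outgoing strands and slide the resulting splitters through the crossing. Post-composing $X$ with $s_i$ and applying the double-strand pitchfork relation~\eqref{eq:dpitchforks} turns $X$ into the configuration in which $j^{(2)}$ crosses the two simple $i$-legs produced by a lower splitter; post-composing with $s_j$ as well and using the single-strand pitchforks~\eqref{eq:lpitchforks}--\eqref{eq:rpitchforks} replaces $j^{(2)}$ by its two legs, so that the whole morphism becomes the \emph{block crossing} $\beta\colon iijj\to jjii$ of four simple strands, precomposed with $s_i\otimes s_j$. Thus everything is reduced to proving $\beta\circ(s_i\otimes s_j)=0$. Along the way I would record the auxiliary vanishing $\tau_{ii}\circ s_i=0$, where $\tau_{ii}$ is the same-colour crossing: indeed the dumbbell relation~\eqref{eq:dumbelXing} gives $\tau_{ii}=s_i\circ m_i$, whence $\tau_{ii}\circ s_i=s_i\circ(m_i\circ s_i)=0$ by the dotless digon above (and symmetrically $m_i\circ\tau_{ii}=0$).

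The crux is the identity $\beta\circ(s_i\otimes s_j)=0$, and this is where I expect the real work to lie. The two $i$-legs emanating from $s_i$ run parallel throughout $\beta$, so to create the triple point needed for Serre I would invoke precisely the case excluded from the ordinary Reidemeister~III relation~\eqref{eq:klrR3} (namely $i=k$ with $\vert i-j\vert=1$), that is, the Serre relation~\eqref{eq:R3serre}, applied to the sub-diagram consisting of the two $i$-legs and an adjacent $j$-strand. Applying~\eqref{eq:R3serre} trades the relevant parallel $iji$-piece for the two weaving terms, each of which now carries an $ii$-crossing that can be brought adjacent to the splitter $s_i$ and so dies by $\tau_{ii}\circ s_i=0$, while the parallel term on the right-hand side of~\eqref{eq:R3serre} is annihilated on the $j$-side. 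The dotted correction terms produced when sliding dots and crossings into position are controlled by~\eqref{eq:klrR2}, \eqref{eq:dotjumpneib}, and \fullref{lem:dotjumpdlb} (the dot on the two $i$-legs may be moved freely across the intervening $j$-strand). The main obstacle is organising this last computation: maneuvering the weaving crossings of Serre so that they sit directly above the splitter where $\tau_{ii}\circ s_i=0$ applies, and keeping track of the chronological signs from~\eqref{eq:chronology} throughout.
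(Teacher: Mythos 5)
Your setup is fine, and it in fact shadows the paper's own opening moves: the paper also inserts the dotted digons of \eqref{eq:digons} and uses the pitchfork relations \eqref{eq:lpitchforks}--\eqref{eq:dpitchforks} to trade double strands for simple legs, and your auxiliary identities $\tau_{ii}\circ s_i=0$, $m_i\circ\tau_{ii}=0$ (dumbbell \eqref{eq:dumbelXing} plus the dotless digon) are correct. The gap is in the crux. Decompose $\beta$ on $iijj$ as four simple crossings and insert $\id_{iji}=\tfrac{1}{r_it_{ij}}(W_1+W_2)$ (Serre, \eqref{eq:R3serre}) at the slice just above the lowest crossing, where three adjacent strands read $i,j,i$. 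For $W_1$ your argument works: the three lowest crossings then form the word $\tau_{23}\tau_{12}\tau_{23}$ on labels $(i,i,j)$, the outer labels differ, so \eqref{eq:klrR3} \emph{does} apply, the $ii$-crossing drops to the bottom, and the term dies against $\tau_{ii}\circ s_i=0$. But it fails for $W_2$: its bottom crossing sits directly on top of the lowest crossing of $\beta$, on the same pair of strands, so this pair collapses by \eqref{eq:klrR2} into $t_{ij}(\text{dot on the $i$-leg})+t_{ji}(\text{dot on the $j$-leg})$; the $j$-dotted piece dies against $\tau_{ii}\circ s_i=0$, while the $i$-dotted piece simplifies by \eqref{eq:dotslide-nilH} (again using $\tau_{ii}\circ s_i=0$) to $r_i$ times the identity on the legs. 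The $W_2$ contribution is therefore exactly $r_it_{ij}\,\beta\circ(s_i\otimes s_j)$, which cancels the Serre right-hand side on the nose: the identity you obtain is $r_it_{ij}\,\beta(s_i\otimes s_j)=0+r_it_{ij}\,\beta(s_i\otimes s_j)$, a tautology. Nor can the $ii$-crossing of $W_2$ be ``brought adjacent to the splitter'' as you assert: it is separated from $s_i$ by $ij$-crossings with $\vert i-j\vert=1$, and sliding a crossing past those is precisely the case excluded from \eqref{eq:klrR3}; every attempt regenerates Serre correction terms, so the maneuver is circular. (The same happens, symmetrically, at the other $i,j,i$ slice.)

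What your crux never uses are the dots, and that is exactly the ingredient the paper's proof runs on. Since the dotless digon is zero, the identity of a double strand is the \emph{dotted} digon; the paper keeps these dots in play after the pitchfork moves and obtains the vanishing from the dot calculus: sliding dots through mixed crossings with \eqref{eq:dotslides}, then invoking the $\vert i-j\vert=1$-specific cancellation \eqref{eq:dotjumpneib} together with $\text{(dot)}^2=0$ from \eqref{eq:dotnil}. It is \eqref{eq:dotjumpneib}, not the Serre relation, that forces the double-double crossing to vanish; without it the argument cannot close, and the chronological signs you single out as the main obstacle are not the real difficulty.
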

\begin{proof}
We compute: 
\[
  \tikz[very thick,baseline={([yshift=.9ex]current bounding box.center)}]{
    \draw[doublegreen] (0,-.5) .. controls (0,0) and (1,0) .. (1,.5);
    \draw[doubleblue] (1,-.5)  .. controls (1,0) and (0,0) .. (0,.5);
    \draw[doublegreen] (0,-2) node[below] {\tiny $i$} -- (0,-.5);
    \draw[doubleblue] (1,-2) node[below] {\tiny $j$} -- (1,-.5);
  }   \overset{\eqref{eq:digons}}{=} 
  \tikz[very thick,scale=1,baseline={([yshift=.8ex]current bounding box.center)}]{
  \draw[doublegreen] (0,-.5) .. controls (0,0) and (1,0) .. (1,.5);
  \draw[doubleblue] (1,-.5)  .. controls (1,0) and (0,0) .. (0,.5);
  \draw[doublegreen] (0,-2)node[below] {\tiny $i$} -- (0,-1.5);
    \draw[doublegreen] (0,-.75) -- (0,-.5);
  \draw[mygreen] (-.015,-1.5) .. controls (-.4,-1.3) and (-.4,-.9) .. (-.015,-.75);
\node[fill=mygreen,circle,inner sep=2pt] at (.275,-1.125)  {};
  \draw[mygreen] (.015,-1.5) .. controls (.4,-1.3) and (.4,-.9) ..  (.015,-.75);
  \draw[doubleblue] (1,-2)node[below] {\tiny $j$} -- (1,-1.75);
  \draw[myblue] (1.0,-1.75) .. controls (.6,-1.3) and (.6,-.9) .. (1.0,-.5);
\node[fill=myblue,circle,inner sep=2pt] at (0.875,-1.6)  {};
  \draw[myblue] (1.015,-1.75) .. controls (1.4,-1.3) and (1.4,-.9) ..  (1.015,-.5);
  } \overset{\eqref{eq:dpitchforks}}{=} 
  \tikz[very thick,scale=1,baseline={([yshift=.8ex]current bounding box.center)}]{
  \draw[doublegreen] (0,-.5) .. controls (0,0) and (1,0) .. (1,.5);
  \draw[doubleblue] (0.1,.25) -- (0,.5);
  \draw[doublegreen] (0,-2)node[below] {\tiny $i$} -- (0,-1.5);
  \draw[doublegreen] (0,-.75) -- (0,-.5);
  \draw[mygreen] (-.015,-1.5) .. controls (-.4,-1.3) and (-.4,-.9) .. (-.015,-.75);
\node[fill=mygreen,circle,inner sep=2pt] at (.275,-1.125)  {};
  \draw[mygreen] (.015,-1.5) .. controls (.4,-1.3) and (.4,-.9) ..  (.015,-.75);
  \draw[doubleblue] (1,-2)node[below] {\tiny $j$} -- (1,-1.75);
  \draw[myblue] (1.0,-1.75) .. controls (.2,-1.0)  and (.3,.1) .. (0.075,.25);
  \draw[myblue] (1.015,-1.75) .. controls (1.4,-1.3) and (1.4,-.4) ..  (0.115,.25);
\node[fill=myblue,circle,inner sep=2pt] at (0.85,-1.6)  {};
  }
  \overset{\eqref{eq:rpitchforks}}{=}  
  \tikz[very thick,scale=1,baseline={([yshift=.8ex]current bounding box.center)}]{
  \draw[doubleblue] (0.1,.25) -- (0,.5);
  \draw[doublegreen] (0,-.5) .. controls (0,0) and (1,0) .. (1,.5);
  \draw[doublegreen] (0,-2)node[below] {\tiny $i$} -- (0,-1.5);
  \draw[doublegreen] (0,-.75) -- (0,-.5);
  \draw[mygreen] (-.015,-1.5) .. controls (-.4,-1.3) and (-.4,-.9) .. (-.015,-.75);
\node[fill=mygreen,circle,inner sep=2pt] at (-.2,-1.35)  {};
  \draw[mygreen] (.015,-1.5) .. controls (.4,-1.3) and (.4,-.9) ..  (.015,-.75);
  \draw[doubleblue] (1,-2)node[below] {\tiny $j$} -- (1,-1.75);
  \draw[myblue] (1.0,-1.75) .. controls (-1.2,-.5)  and (-.4,-.25) .. (0.075,.25);
  \draw[myblue] (1.015,-1.75) .. controls (1.4,-1.3) and (1.4,-.4) ..  (0.115,.25);
\node[fill=myblue,circle,inner sep=2pt] at (0.82,-1.672)  {};
  }   
\]
which is zero if $i=j\pm 1$ by~\eqref{eq:dotslides},~\eqref{eq:dotjumpneib} and~\eqref{eq:dotnil}.  
\end{proof}

The following are easy consequences of the defining relations of $\fR(\nu)$.

\begin{lem} 
For all $i$, $j\in I$,
  \[ 
  \tikz[very thick,baseline={([yshift=.9ex]current bounding box.center)}]{
    \draw[doublegreen] (0,-.5) node[below] {\tiny $i$} .. controls (0,0) and (1,0) .. (1,.5);
    \draw[myblue] (1,-.5) node[below] {\textcolor{black}{\tiny $j$}}  .. controls (1,0) and (0,0) .. (0,.5)node [near end,fill=myblue,circle,inner sep=2pt]{};
  }
  =
    \tikz[very thick,baseline={([yshift=.9ex]current bounding box.center)}]{
       \draw[doublegreen] (0,-.5) node[below] {\tiny $i$} .. controls (0,0) and (1,0) .. (1,.5);
       \draw[myblue] (1,-.5) node[below] {\textcolor{black}{\tiny $j$}}  .. controls (1,0) and (0,0) .. (0,.5)node [near start,fill=myblue,circle,inner sep=2pt]{};
    }
    \mspace{80mu}
\tikz[very thick,baseline={([yshift=.9ex]current bounding box.center)}]{
    \draw[mygreen] (0,-.5) node[below] {\textcolor{black}{\tiny $i$}} .. controls (0,0) and (1,0) .. (1,.5)node [near start,fill=mygreen,circle,inner sep=2pt]{};
    \draw[doubleblue] (1,-.5) node[below] {\tiny $j$}  .. controls (1,0) and (0,0) .. (0,.5);
  }
  =
 \tikz[very thick,baseline={([yshift=.9ex]current bounding box.center)}]{
    \draw[mygreen] (0,-.5) node[below] {\textcolor{black}{\tiny $i$}} .. controls (0,0) and (1,0) .. (1,.5)node [near end,fill=mygreen,circle,inner sep=2pt]{};
    \draw[doubleblue] (1,-.5) node[below] {\tiny $j$}  .. controls (1,0) and (0,0) .. (0,.5);
  } 
\]
\end{lem}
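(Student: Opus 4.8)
The plan is to reduce the double-simple crossing to a pair of ordinary simple-simple crossings, where the dot-slide relation \eqref{eq:dotslides} is available, and then to exploit the fact that two applications of that relation contribute the sign $(-1)^{2p_{ij}}=+1$. In other words, the freeness of the dot-slide through a doubled strand is \emph{entirely} a reflection of the doubled crossing being even while the thin crossing has parity $p_{ij}$: passing a dot through the two thin crossings that make up the thick one squares the sign and kills it.

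First I would open up the doubled strand. By the digon relation \eqref{eq:digons}, the identity on the double green $i$-strand equals the composite that splits $i^{(2)}$ into $ii$, places a dot on one of the two thin strands, and merges back; since this is literally the identity, I may insert it just below the crossing at no cost. Next I would push the crossing of the blue $j$-strand downward through the merge using the pitchfork relations \eqref{eq:lpitchforks}--\eqref{eq:rpitchforks}, which replaces the single double-simple crossing by two genuine simple-simple crossings of the $j$-strand with each of the two thin $i$-strands, with the auxiliary dot coming from the digon simply riding along on one green strand. With the crossing so resolved, the dot sitting near the top of the blue strand slides downward past each of the two thin crossings by \eqref{eq:dotslides}; each pass contributes $(-1)^{p_{ij}}$, so the two passes together contribute $(-1)^{2p_{ij}}=+1$ and deposit the dot near the bottom. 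Reversing the pitchfork moves and collapsing the digon back to the identity via \eqref{eq:digons} yields exactly the right-hand side, proving the first equality. The second equality is obtained in the same way, splitting the doubled blue $j$-strand instead and sliding the green dot through the two resulting thin crossings.

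The only real work is sign bookkeeping. Because every generator being rearranged here — the dots and the two splitters — is odd, moving the blue dot relative to the splitters and to the auxiliary green dot could a priori introduce Koszul factors through the interchange law \eqref{eq:chronology}. The point I would be most careful to verify is that the digon is inserted and later removed in a height-symmetric fashion, so that any such factors occur in cancelling pairs, leaving the genuine sign $(-1)^{2p_{ij}}=+1$ from the two dot-slides as the only contribution. Granting this, no relation beyond \eqref{eq:chronology}, \eqref{eq:dotslides}, \eqref{eq:digons}, and \eqref{eq:lpitchforks}--\eqref{eq:rpitchforks} is needed, which is why the statement is indeed an easy consequence of the defining relations.
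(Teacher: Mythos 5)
Your overall route --- open the doubled strand with the digon relation \eqref{eq:digons}, convert the thick crossing into two thin crossings by a pitchfork move \eqref{eq:lpitchforks}--\eqref{eq:rpitchforks}, slide the dot through both thin crossings at the cost of $(-1)^{2p_{ij}}=+1$, then reassemble --- is the right one; the paper omits the argument entirely (the lemma is listed among the ``easy consequences'' of the defining relations), and this is the natural way to supply it. However, the sign bookkeeping, which you yourself single out as the only real work, is wrong as stated. You assert that ``the dots and the two splitters'' are all odd. In $\fR(\nu)$ the two trivalent vertices do \emph{not} have the same parity: the upward splitter $i^{(2)}\to q^{-1}ii$ is odd, but the merger $ii\to q^{-1}i^{(2)}$ is even, lying in $\Hom^0_{\fR(\nu)}(ii,q^{-1}i^{(2)})$. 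This is not cosmetic. With your parities, the blue dot travelling from top to bottom would pass \emph{three} odd generators (splitter, auxiliary green dot, merger), contributing $(-1)^3=-1$, so your own cancellation-in-pairs claim would fail and the method would produce the lemma with a spurious minus sign. Your parity assignment also contradicts \eqref{eq:digons} itself: it would make the dotted digon an odd morphism equal to the even identity of $i^{(2)}$.

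The correct accounting is in fact simpler than the ``height-symmetric insertion'' you propose, and needs no care at all. The inserted digon, dot included, \emph{is} the identity of $i^{(2)}$ by \eqref{eq:digons}, hence an even morphism; so the super interchange law \eqref{eq:chronology} lets the odd blue dot pass the whole block with no Koszul factor (equivalently, it passes exactly two odd generators, the splitter and the auxiliary dot, and $(-1)^2=+1$). The only surviving signs are then the two factors $(-1)^{p_{ij}}$ from \eqref{eq:dotslides}, which square to $+1$; reversing the pitchfork and collapsing the digon gives the claim, and the second equality of the lemma follows symmetrically by splitting the doubled $j$-strand. Finally, \eqref{eq:dotslides} requires $i\neq j$, so the case $i=j$ needs a separate (trivial) word: there both sides vanish outright, since by \eqref{eq:morethanthreestrands} the identity of the object $i^{(2)}i$ is already zero.
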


\begin{lem}
For all $i$, $j\in I$,
\begingroup\allowdisplaybreaks
\begin{align*}
\tikz[very thick,xscale=1,yscale=1,baseline={([yshift=.8ex]current bounding box.center)}]{
\draw[mygreen]  +(0,-.75) node[below] {\textcolor{black}{\tiny $i$}} .. controls (0,-.375) and (1,-.375) .. (1,0) .. controls (1,.375) and (0, .375) .. (0,.75);
\draw[doubleblue]  +(1,-.75) node[below] {\tiny $j$} .. controls (1,-.375) and (0,-.375) .. (0,0) .. controls (0,.375) and (1, .375) .. (1,.75);
 }
 \mspace{10mu}=\mspace{10mu}
\begin{cases}
    t_{ij}^2 \tikz[very thick,xscale=1,yscale=0.75,baseline={([yshift=.8ex]current bounding box.center)}]{
    \draw[mygreen]  (0,-.75) node[below] {\textcolor{black}{\tiny $i$}} -- (0,.75);
     \draw[doubleblue]  (1,-.75) node[below] {\tiny $j$} -- (1,.75);
    } & \text{ if }\vert i-j \vert > 1,
    \\ \\
    \mspace{42mu} 0 & \text{ otherwise},
\end{cases}
\\[2ex]
\tikz[very thick,xscale=1,yscale=1,baseline={([yshift=.8ex]current bounding box.center)}]{
\draw[doublegreen]  +(0,-.75) node[below] {\tiny $i$} .. controls (0,-.375) and (1,-.375) .. (1,0) .. controls (1,.375) and (0, .375) .. (0,.75);
\draw[myblue]  +(1,-.75) node[below] {\textcolor{black}{\tiny $j$}} .. controls (1,-.375) and (0,-.375) .. (0,0) .. controls (0,.375) and (1, .375) .. (1,.75);
 }
 \mspace{10mu}=\mspace{10mu}
\begin{cases}
    t_{ij}^2 \tikz[very thick,xscale=1,yscale=0.75,baseline={([yshift=.8ex]current bounding box.center)}]{
    \draw[doublegreen]  (0,-.75) node[below] {\tiny $i$} -- (0,.75);
     \draw[myblue]  (1,-.75) node[below] {\textcolor{black}{\tiny $j$}} -- (1,.75);
    } & \text{ if }\vert i-j \vert > 1,
    \\ \\
    \mspace{42mu} 0 & \text{ otherwise},
\end{cases}
\\[2ex]
\tikz[very thick,xscale=1,yscale=1,baseline={([yshift=.8ex]current bounding box.center)}]{
\draw[doublegreen]  +(0,-.75) node[below] {\tiny $i$} .. controls (0,-.375) and (1,-.375) .. (1,0) .. controls (1,.375) and (0, .375) .. (0,.75);
\draw[doubleblue]  +(1,-.75) node[below] {\tiny $j$} .. controls (1,-.375) and (0,-.375) .. (0,0) .. controls (0,.375) and (1, .375) .. (1,.75);
 }
 \mspace{10mu}=\mspace{10mu}
\begin{cases}
    t_{ij}^4 \tikz[very thick,xscale=1,yscale=0.75,baseline={([yshift=.8ex]current bounding box.center)}]{
    \draw[doublegreen]  (0,-.75) node[below] {\tiny $i$} -- (0,.75);
     \draw[doubleblue]  (1,-.75) node[below] {\tiny $j$} -- (1,.75);
    } & \text{ if }\vert i-j \vert > 1,
    \\ \\
    \mspace{42mu} 0 & \text{ otherwise} . 
\end{cases}
\end{align*}
\endgroup
\end{lem}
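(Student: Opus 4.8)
The plan is to reduce every bigon involving a double strand to a bigon of \emph{thin} strands, where the crossing relation \eqref{eq:klrR2} applies directly. The key mechanism is that a strand passing a double strand can be resolved by splitting the double strand into two thin ones: using the splitter generators together with the pitchfork relations \eqref{eq:lpitchforks}, \eqref{eq:rpitchforks} and \eqref{eq:dpitchforks}, a splitter slides freely through a crossing, turning a crossing with a double strand into two consecutive crossings with thin strands. Concretely, for the first identity I would post-compose the bigon $B\colon ij^{(2)}\to ij^{(2)}$ with $\mathrm{id}_i\otimes(\text{split }j^{(2)}\to jj)$ and push the splitter downwards through both single--double crossings via \eqref{eq:rpitchforks}, obtaining $(\mathrm{id}_i\otimes\text{split})\circ B=B'\circ(\mathrm{id}_i\otimes\text{split})$ where $B'$ is the bigon of the thin strand $i$ over the pair $jj$. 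For the second identity one splits $i^{(2)}$ instead, and for the third one splits both double strands (also using \eqref{eq:dpitchforks}), reducing to the bigon of $ii$ over $jj$.

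For $\vert i-j\vert>1$ the thin bigons are evaluated by nested applications of \eqref{eq:klrR2}: resolving the inner bigon of $i$ over a single $j$ contributes a factor $t_{ij}$ and leaves the bigon of $i$ over the remaining $j$, contributing another $t_{ij}$. Thus $B'=t_{ij}^2\,\mathrm{id}$ in the first two cases and $B'=t_{ij}^4\,\mathrm{id}$ in the third (four thin bigons, one per pair), which already explains the exponents $2,2,4$. To transport these scalars back to the double--strand endomorphism I would insert the factorization $\mathrm{id}_{j^{(2)}}=\text{merge}\circ\text{dot}\circ\text{split}$ provided by the digon relation \eqref{eq:digons} above $B$: since $(\mathrm{id}_i\otimes\text{split})\circ B=t_{ij}^2(\mathrm{id}_i\otimes\text{split})$, this yields $B=t_{ij}^2\,\mathrm{id}$, and the same bookkeeping handles the other two identities.

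The vanishing (``otherwise'') cases split further. When $i=j$ the reduced thin bigon contains a bigon of two equal thin strands, which is zero by the top case of \eqref{eq:klrR2}; hence $B'=0$ and the digon step gives $B=0$ (consistently with $\text{merge}\circ\text{split}=0$). For the third identity with $\vert i-j\vert=1$ I would simply invoke the previous lemma, since the single double--double crossing already vanishes and so does its square. The genuinely delicate case, which I expect to be the main obstacle, is $\vert i-j\vert=1$ in the first two identities: here \eqref{eq:klrR2} resolves each thin bigon not into a scalar but into the sum $t_{ij}(\bullet)+t_{ji}(\bullet)$ of two singly-dotted strands. After re-merging the two thin $j$-strands into $j^{(2)}$ these dot terms must cancel, and the plan is to establish this using the neighbour dot relation \eqref{eq:dotjumpneib}, the dot-slides \eqref{eq:dotslides} and \eqref{eq:dotnil}, and \fullref{lem:dotjumpdlb} to transfer the dots onto the double strand, where the surviving combination is exactly the one forced to zero. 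One must track the parity signs coming from \eqref{eq:chronology} and \eqref{eq:dotslides}, although these are absent in the sign-free regime $\vert i-j\vert>1$ that produces the nonzero answers.
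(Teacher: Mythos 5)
The paper never writes a proof of this lemma at all: it appears under the heading ``easy consequences of the defining relations of $\fR(\nu)$'', so your proposal has to stand on its own merits rather than be compared to an official argument. Its architecture does stand: sliding a splitter through the two crossings of the bigon via \eqref{eq:lpitchforks}--\eqref{eq:dpitchforks} gives exactly $(\mathrm{id}\otimes\mathrm{split})\circ B=B'\circ(\mathrm{id}\otimes\mathrm{split})$ with $B'$ a bigon of thin strands, and re-inserting $\mathrm{id}_{j^{(2)}}=\mathrm{merge}\circ\mathrm{dot}\circ\mathrm{split}$ from \eqref{eq:digons} above $B$ correctly transports any evaluation of $B'$ back to $B$. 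This produces $t_{ij}^2$ and $t_{ij}^4$ when $\vert i-j\vert>1$, disposes of $i=j$ through the vanishing thin $(i,i)$-bigon in \eqref{eq:klrR2} (or even faster through \eqref{eq:morethanthreestrands}), and the double--double case with $\vert i-j\vert=1$ does follow from the preceding lemma, since both crossings in that bigon vanish.

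The one step that would fail as written is your mechanism for $\vert i-j\vert=1$ in the first two identities. Resolving the two nested thin bigons with \eqref{eq:klrR2} and sliding dots with \eqref{eq:dotslides} leaves, after killing the $(\mathrm{dot}_i)^2$ term by \eqref{eq:dotnil}, the three terms $t_{ij}t_{ji}\,\mathrm{dot}_{j_1}\mathrm{dot}_{i}$, $t_{ij}t_{ji}\,\mathrm{dot}_{i}\mathrm{dot}_{j_2}$ and $t_{ji}^2\,\mathrm{dot}_{j_1}\mathrm{dot}_{j_2}$ on the identity of $ijj$. These do \emph{not} cancel against each other, and neither \eqref{eq:dotjumpneib} (which concerns dots on the two strands of one and the same crossing, a configuration that never arises here) nor \fullref{lem:dotjumpdlb} (no double strand is present at this stage) applies. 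What closes the argument is that no cancellation is needed: after composing with the $\mathrm{merge}\circ(\mathrm{dot}_{j_1}\otimes\mathrm{id})$ coming from \eqref{eq:digons}, every term carries two dots on the parallel same-labeled pair $jj$, and any such product is zero, because \eqref{eq:dotjumps} lets a dot hop between the two strands of the pair, so that $\mathrm{dot}_{j_1}\mathrm{dot}_{j_2}=\mathrm{dot}_{j_1}\mathrm{dot}_{j_1}=0$ by \eqref{eq:dotnil}. In particular each term dies individually, so the parity signs you were worried about never need to be tracked. Replace \eqref{eq:dotjumpneib} and \fullref{lem:dotjumpdlb} by \eqref{eq:dotjumps} at this point and your proof is complete; the same remark fixes the symmetric case where $i^{(2)}$ is split instead.
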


\begin{lem}
If $\vert i - j \vert = 1$ and $i=k$, 
  \[
         \tikz[very thick,baseline={([yshift=.9ex]current bounding box.center)}]{
		\draw[mygreen]  +(0,0)node[below] {\textcolor{black}{\tiny $i$}} .. controls (0,0.5) and (2, 1) ..  +(2,2);
		\draw[mygreen]  +(2,0)node[below] {\textcolor{black}{\tiny $k$}} .. controls (2,1) and (0, 1.5) ..  +(0,2);
		\draw[doubleblue]  (1,0)node[below] {\tiny $j$} .. controls (1,0.5) and (0, 0.5) ..  (0,1) .. controls (0,1.5) and (1, 1.5) ..  (1,2);
        }
 	\mspace{10mu}-\mspace{10mu}
	 \tikz[very thick,baseline={([yshift=.9ex]current bounding box.center)}]{
		\draw[mygreen]  +(0,0)node[below] {\textcolor{black}{\tiny $i$}} .. controls (0,1) and (2, 1.5) ..  +(2,2);
		\draw[mygreen]  +(2,0)node[below] {\textcolor{black}{\tiny $k$}} .. controls (2,.5) and (0, 1) ..  +(0,2);
		\draw[doubleblue]  (1,0)node[below] {\tiny $j$} .. controls (1,0.5) and (2, 0.5) ..  (2,1) .. controls (2,1.5) and (1, 1.5) .. (1,2);
	 }
         \mspace{10mu}=\mspace{10mu}
	 r_it_{ij}^2\ \tikz[very thick,baseline={([yshift=.9ex]current bounding box.center)}]{
	   \draw[mygreen] (0,0) node[below] {\textcolor{black}{\tiny $i$}} -- (0,2)
           node[midway,fill=mygreen,circle,inner sep=2pt]{};;
    	 \draw[doubleblue] (1,0) node[below] {\tiny $j$} -- (1,2);
         \draw[mygreen] (2,0) node[below] {\textcolor{black}{\tiny $k$}} -- (2,2);
	 }
         \mspace{10mu}-\mspace{10mu}
	 r_it_{ij}^2\ \tikz[very thick,baseline={([yshift=.9ex]current bounding box.center)}]{
	 \draw[mygreen] (0,0) node[below] {\textcolor{black}{\tiny $i$}} -- (0,2);
    	 \draw[doubleblue] (1,0) node[below] {\tiny $j$} -- (1,2);
         \draw[mygreen] (2,0) node[below] {\textcolor{black}{\tiny $k$}} -- (2,2)
         node[midway,fill=mygreen,circle,inner sep=2pt]{};;
	 } 
\]
If $i\neq j\neq k$ and at least one of the strands is double,
then the right hand side is zero. 
\end{lem}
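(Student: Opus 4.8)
The plan is to prove the thick relation by opening the middle $j^{(2)}$-strand into two thin $j$-strands and appealing to the thin relations already recorded, exactly in the spirit of the preceding double-strand lemmas. Concretely, I would first use the digon relation \eqref{eq:digons} to write the identity on $j^{(2)}$ as a splitter $j^{(2)}\to jj$, followed by a single dot on one of the two resulting $j$-strands, and then a merger $jj\to j^{(2)}$, and insert this composite below each of the two diagrams on the left-hand side. Sliding the splitter upward through the two green $i$-strands by means of the pitchfork relations \eqref{eq:lpitchforks}--\eqref{eq:dpitchforks} (applied with the splitter on the $j$-strand) replaces every green-over-double-blue crossing by a pair of green-over-single-blue crossings. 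After this step the left-hand side becomes a difference of thin diagrams in which the two $i$-strands braid over two single $j$-strands, one of which carries a dot, the whole flanked by a splitter below and a merger above.

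Next I would simplify the thin braiding using the single-strand relations. As $|i-j|=1$, the Serre relation \eqref{eq:R3serre} applies to the two $i$-strands passing one $j$-strand and returns the term with straight $i$-strands and coefficient $r_it_{ij}$, while the bigon formed by an $i$-strand and the second $j$-strand is resolved by \eqref{eq:klrR2}, which deposits a dot of coefficient $t_{ij}$ on the $i$-strand (the companion $t_{ji}$-term, carrying a dot on the $j$-strand, drops out at the recombination step below). Collecting these contributions accounts for the factor $r_it_{ij}^2$. The signs require care: the parities $p_{ij}$ governing the crossings, the global sign of \eqref{eq:klrR3}, the chronology relation \eqref{eq:chronology}, and the dot-slides \eqref{eq:dotslides}, \eqref{eq:dotjumpneib} together convert the \emph{sum} appearing in \eqref{eq:R3serre} into the signed \emph{difference} of a dot on the left and a dot on the right $i$-strand.

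Finally I would recombine the two $j$-strands with the merger and invoke \eqref{eq:digons} once more: a dotless split--merge vanishes, while a split--merge bearing a single dot on a $j$-strand equals $\id_{j^{(2)}}$. Thus only the terms carrying exactly one dot on a $j$-strand survive; that dot is absorbed, the double strand is restored, and the remaining dot, with scalar $r_it_{ij}^2$, sits on an outer $i$-strand, giving the claimed right-hand side. In the second case, where $i\neq j\neq k$ and at least one strand is double, the colours lie outside the exceptional regime $i=k,\ |i-j|=1$ of \eqref{eq:klrR3}; splitting the double strand(s) as above reduces the two braidings to thin diagrams to which \eqref{eq:klrR3} applies verbatim, so they coincide and their difference, hence the recombined diagram, is zero.

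The main obstacle is precisely the sign-and-scalar bookkeeping of the middle paragraph: tracking the parities $p_{ij}$, the $t$-coefficients contributed by each $i$-$j$ crossing, and the $r_i$ from the nilHecke relation as dots slide and splitters pass the crossings, so that the dotless and doubly-dotted contributions cancel while exactly the single-dotted terms survive with the stated coefficient and sign. As a consistency check, \autoref{lem:dotjumpdlb} shows that for $|i-j|=1$ a dot on the left $i$-strand equals a dot on the right one, so the right-hand side in fact vanishes; this matches the alternative of proving directly that the left-hand side is zero, as in the preceding lemma.
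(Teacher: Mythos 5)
Your overall strategy is the right one, and it is evidently what the paper intends: the paper states this lemma without proof (as an ``easy consequence''), and the proofs it does give for the neighbouring statements (the vanishing of the $i^{(2)}j^{(2)}$-crossing and \fullref{lem:dotjumpdlb}) use precisely your scheme of opening doubles via \eqref{eq:digons}, sliding splitters/mergers with \eqref{eq:lpitchforks}--\eqref{eq:dpitchforks}, and reducing to the thin relations; your closing consistency check against \fullref{lem:dotjumpdlb} is also correct. The genuine gap is in your recombination step. If one carries the computation out (two applications of \eqref{eq:R3serre}, once forwards and once backwards; all interchange signs are $+1$ here because $p_{ij}p_{ji}=0$ when $\vert i-j\vert=1$), the difference of the two opened braidings equals $r_it_{ij}$ times the difference of two double crossings, each pairing an outer $i$-strand with its adjacent thin $j$-strand. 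Resolving these by \eqref{eq:klrR2} produces not only the two $t_{ij}$-terms carrying a dot on an outer $i$-strand --- these recombine through \eqref{eq:digons} into the stated right-hand side with coefficient $r_it_{ij}^2$ --- but also two $t_{ji}$-terms carrying a dot on a thin $j$-strand. One of these is killed against the opening dot by \eqref{eq:dotnil}, but the other yields, after recombination, a digon with one dot on \emph{each} thin $j$-strand, with coefficient $r_it_{ij}t_{ji}$. This configuration is \emph{not} covered by \eqref{eq:digons}, which only treats the dotless digon (zero) and the single-dotted digon (identity), so your assertion that ``only the terms carrying exactly one dot on a $j$-strand survive\dots by \eqref{eq:digons}'' is unjustified. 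The doubly-dotted digon does vanish, but that requires its own argument: insert a single-dotted digon identity below it, replace the resulting merge-then-split by the $jj$-crossing using \eqref{eq:dumbelXing}, apply \eqref{eq:dotslide-nilH} twice, and conclude with \eqref{eq:dotnil} and \eqref{eq:chronology} (one gets $X=r_jX-r_jX=0$). Whichever of the two one-dot openings you choose, exactly one such cross-term appears, so this auxiliary lemma cannot be avoided.

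Your justification of the final assertion is also incorrect as stated. First, ``$i\neq j\neq k$'' does not exclude $i=k$, so configurations such as $i^{(2)},\,j,\,i^{(2)}$ with $\vert i-j\vert=1$ fall under the second claim while lying squarely \emph{inside} the exceptional regime of \eqref{eq:klrR3}; there the dotted correction terms of \eqref{eq:R3serre} reappear and must be shown to cancel after recombination. Second, even for pairwise distinct labels, \eqref{eq:klrR3} does not apply ``verbatim'': it carries the sign $(-1)^{p_{jk}p_{ik}+p_{jk}p_{ij}+p_{ik}p_{ij}}$, which equals $-1$ for instance when $(i,j,k)=(a+2,a+1,a)$, so the two thin braidings need not coincide. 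What actually makes the difference vanish is that crossings involving a double strand are even generators: opening a double strand forces every thin R3 move and every interchange sign to occur an even number of times, so all signs appear squared. It is this parity-doubling argument, not a sign-free reading of \eqref{eq:klrR3}, that proves the second claim.
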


Let
\[
\seq(\nu) :=
\bigl\{ i_1^{(\varepsilon_1)}\dotsm  i_r^{(\varepsilon_r)}\in\cseq(\nu) \bigr\vert\, \varepsilon_s=1 \bigr\} \subset\cseq(\nu) . 
\]

The superalgebra
\[
\overline{\fR}(\nu) = \bigoplus\limits_{\bi,\bj \in\seq(\nu)}\Hom_{\fR(\nu)}(\bi,\bj) , 
\]
is the sub-superalgebra of the $\Hom$-superalgebra of $\fR(\nu)$ consisting of all diagrams having only simple strands. 
If we interpret $\overline{\fR}(\nu)$ as a superalgebra version of a level 2 cyclotomic KLR algebra for $\sln$ then
$\fR(\nu)$ can be seen as version of the thick calculus~\cite{KLMS,stosic} for this superalgebra. 
It is not hard to see that both the center and the supercenter of $\overline{\fR}(\nu)$ are zero.

\subsection{Cyclotomic quotients}

Fix a $\sln$-weight $\Lambda$ and denote by $R^\Lambda(\nu)$, $\overline{\fR}^\Lambda(\nu)$ and $\fR^\Lambda(\nu)$ the 
cyclotomic quotients of $R(\nu)$, $\overline{\fR}(\nu)$ and $\fR(\nu)$. The following is immediate. 
\begin{lem}\label{lem:Risomod}
If $\Lambda$ is of level 2 then the algebras $\overline{\fR}^\Lambda(\nu)\otimes_{\bZ}(\bZ/2\bZ)$ and 
  $R^\Lambda(\nu)\otimes_{\bZ}(\bZ/2\bZ)$ are isomorphic
  (after collapsing the $\bZ/2\bZ$ grading of $\overline{\fR}^\Lambda(\nu)$). 
\end{lem}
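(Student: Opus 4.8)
The plan is to compare presentations generator by generator and build mutually inverse maps. The ordinary cyclotomic KLR algebra $R^\Lambda(\nu)$ is generated by idempotents $e(\bi)$ (with $\bi$ ranging over $\seq(\nu)$), dots, and crossings, subject to the usual KLR relations and the cyclotomic relation; the superalgebra $\overline{\fR}^\Lambda(\nu)$ is generated by exactly the diagrams built from \emph{simple} strands only (simple identities, dots, and simple--simple crossings), subject to relations~\eqref{eq:chronology}--\eqref{eq:klrR3} restricted to simple strands, together with the cyclotomic relation. After tensoring with $\bZ/2\bZ$ and collapsing the parity grading, I would define a $\bZ$-graded algebra map $\Phi$ sending each simple identity, dot, and crossing of $\overline{\fR}^\Lambda(\nu)$ to the corresponding KLR idempotent, dot, and crossing of $R^\Lambda(\nu)$, and a map $\Psi$ in the opposite direction by the same assignment. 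Since these are mutually inverse on generators, it suffices to verify that each is well defined, i.e.\ that the defining relations of the source hold in the target.

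The key observation is that every sign distinguishing the skew relations from the ordinary KLR relations has the form $(-1)^{(\dotsm)}$ and hence becomes $+1$ in $\bZ/2\bZ$. Concretely, the chronology relation~\eqref{eq:chronology} collapses to ordinary commutativity of horizontally distant generators since $(-1)^{p(f)p(g)}\equiv 1$; the dot--slide relations~\eqref{eq:dotslides} lose their signs $(-1)^{p_{ij}}$; the braid-type relation~\eqref{eq:klrR3} loses the sign $(-1)^{p_{jk}p_{ik}+p_{jk}p_{ij}+p_{ik}p_{ij}}$; and the quadratic relation~\eqref{eq:klrR2}, the nilHecke-type relation~\eqref{eq:dotslide-nilH}, and the Serre-type relation~\eqref{eq:R3serre} match the corresponding KLR relations once the scalars $Q=(r_i,t_{ij})$ are identified with the KLR quiver parameters. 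Thus, on simple strands and modulo $2$, the relation set of $\overline{\fR}^\Lambda(\nu)$ agrees termwise with that of $R^\Lambda(\nu)$, with one genuine exception: the dot-nilpotency relation~\eqref{eq:dotnil}, $x^2=0$, which is built into $\overline{\fR}$ but is \emph{not} a relation of the ambient (noncyclotomic) KLR algebra.

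Reconciling this last relation is where the level $2$ hypothesis is essential, and I expect it to be the main obstacle. On the $R^\Lambda(\nu)$ side the cyclotomic relation kills $x_1^{\brak{\Lambda,\alpha_{i_1}}}e(\bi)$, and for $\Lambda$ of level $2$ one has $\brak{\Lambda,\alpha_{i_1}}\le 2$; propagating this bound to every strand through the (mod $2$) KLR relations should show that every dot squares to zero in $R^\Lambda(\nu)\otimes\bZ/2\bZ$, so that relation~\eqref{eq:dotnil} holds automatically in the target of $\Phi$, while conversely the cyclotomic relation of $\overline{\fR}^\Lambda(\nu)$ is imposed identically, making $\Psi$ well defined. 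The delicate point is precisely the nilpotency degree of dots on \emph{non-leftmost} strands: the cyclotomic relation only directly bounds $x_1$, so to conclude $x_s^2=0$ for all $s$ I would invoke the known nilpotency behaviour of dots in level-$\ell$ cyclotomic KLR algebras rather than argue purely formally. Once this is in hand, $\Phi$ and $\Psi$ are mutually inverse isomorphisms, proving the claim.
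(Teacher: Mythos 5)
Your overall strategy (identity-on-diagrams maps both ways, all signs dying mod $2$, so that the whole content is well-definedness) is the natural one; note that the paper itself gives no argument at all, declaring the lemma immediate, so the only question is whether your write-up actually closes the claim. It does not, for two concrete reasons.

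First, \eqref{eq:dotnil} is \emph{not} the only relation of $\overline{\fR}(\nu)$ without a KLR counterpart: relation \eqref{eq:dotjumpneib} is a second one, and you never verify it in $R^\Lambda(\nu)\otimes_{\bZ}(\bZ/2\bZ)$. Using \eqref{eq:klrR2} and \eqref{eq:dotslides}, relation \eqref{eq:dotjumpneib} amounts to $\psi_r^3e(\bi)=0$ whenever $\vert i_r-i_{r+1}\vert=1$, and this does not follow from the KLR relations together with dots squaring to zero: in $R(\alpha_i+\alpha_{i+1})\otimes\bZ/2\bZ$ modulo the two-sided ideal generated by all $x_s^2e(\bj)$, dots of distinct colors slide freely through the crossing, so that ideal is spanned by monomials containing a squared dot, and $\psi^3e(\bi)=(t_{i,i+1}x_1+t_{i+1,i}x_2)\psi e(\bi)$ survives as a nonzero combination of basis elements. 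Hence, even granting your nilpotency claim, your map $\Phi$ is not yet known to be well defined; checking \eqref{eq:dotjumpneib} in the level-2 cyclotomic quotient mod $2$ is a second, independent, level-2-specific verification (it is the mod-2 shadow of the even-theory identity saying that a saddle composed with the sum of the two dots equals $2(\dotsm)$, which vanishes only modulo $2$).

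Second, the step you yourself flag as delicate is not closed by what you propose to invoke. The "known nilpotency behaviour of dots" in cyclotomic KLR algebras (Kang--Kashiwara, Lauda--Vazirani) gives only that each $x_s$ is nilpotent of \emph{some} degree; it does not bound that degree by the level, and no such bound holds in general (already in affine type A at level 1 the dots have unbounded nilpotency degrees across blocks). The statement you actually need --- $x_s^2e(\bi)=0$ for every strand $s$ in $R^\Lambda(\nu)$ when $\Lambda$ is a level-2 weight of $\sln$ --- is a genuine theorem about \emph{finite} type A level-2 quotients: it follows, for instance, from their identification with arc-algebra-type endomorphism algebras, or from graded-dimension arguments via the categorification theorem, or by an explicit induction along strands in which the equal-label (nilHecke) case requires a real computation and does not reduce to formal manipulation. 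As written, both nonformal inputs your argument depends on remain unproved.
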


\medskip

We depict a morphism of $\fR^\lambda(\nu)$ by decorating the rightmost region of each diagram $D$ with the weight $\Lambda$.
This defines weights for all regions of $D$.

\smallskip

The supercategory $\fR^\Lambda :=\oplus_{\nu\in\bN_0[I]}\fR^\Lambda(\nu)$ is not monoidal anymore, but it is a 
(left) module category over $\fR$, where $\fR$ acts by adding diagrams of $\fR$ to the left of diagrams from $\fR^\Lambda$. 
This is expressed by a bifunctor
\begin{equation} \label{eq:bifunctor}
\Phi\colon  \fR \times \fR^\lambda \to  \fR^\lambda.
\end{equation}

\subsection{A super 2-category}

There is a super 2-category around $\fR(\nu)$, paralleling the case of Khovanov--Lauda and Rouquier.  
An element $\bi=i_1^{(\varepsilon_1)}\dotsm i_r^{(\varepsilon_r)}$ in $\cseq(\nu)$ corresponds to a  
root $\alpha_{\bi}:= \sum_s\varepsilon_s\alpha_s$. 
Let $\Lambda(n,d):=\{\mu\in\{0,1,2\}^n\vert \mu_1+\dotsm+\mu_n = d\}$. 

\smallskip

Define $\cR(n,d)$ as the super 2-category with objects the elements of
$\Lambda(n,d)$ and
with morphism supercategories $\HOM_{\cR(n,d)}(\mu,\mu')$ the various $\fR(\nu)$.
In other words, a 1-morphisms $\mu\to\mu'$ is a sequence $\bi$ such that $\mu'-\mu=\alpha_{\bi}$ 
and the 2-morphism space $\bi\to\bj$ is $\Hom_{\fR(\nu)}(\bi,\bj)$.

\smallskip

Similarly we define the super 2-category $\cR^\Lambda(n,d)$ by using the cyclotomic quotient with respect with
the integral dominant weight $\Lambda$.
Both super 2-categories $\cR^\Lambda(n,d)$ have diagrammatic presentations with regions labeled by objects $\Lambda$.
The 2-morphisms in $\cR^\lambda(n,d)$ are presented as a collection of 2-morphisms in 
$\cR(n,d)$ with rightmost region decorated with $\Lambda$, subjected to the same relations together with the
cyclotomic condition.
This defines a label for every region of a diagram of $\cR^\Lambda(n,d)$.

\smallskip

For later use, we denote
\[
F_\bi\lambda:=F_{i_1^{(\varepsilon_1)}\dotsm i_r^{(\varepsilon_r)}}\lambda:=F_{i_1}^{(\varepsilon_1)}\dotsm F_{i_r}^{(\varepsilon_r)}\lambda
\] 
the 1-morphisms of $\cR^\Lambda(n,d)$ and, by abuse of notation, the objects of $\fR^\Lambda$.

\subsection{Action on a supercommutative ring}\label{sec:polyaction}

We now construct an action of $\fR(\nu)$ on exterior spaces. 

\subsubsection{Demazure operators on an exterior algebra}

Let $V=\bV(y_1,\dotsc, y_d)$ be the exterior algebra in $d$ variables.
This algebra is naturally graded by word length. Denote by $\vert z\vert$ the degree of the homogeneous
element $z$. 

The symmetric group $\fS_d$ acts on $V$ by the permutation action,
\[
wy_i=y_{w(i)}
\]
for all $w\in\fS_d$.

Define operators $\partial_i$ for $i=1,\dotsc, d-1$ on $V$ by the following rules:
\begin{align*}
\partial_i(y_k)
&= \begin{cases}
  1 & i = k, k+1 ,
  \\
  0 & \text{otherwise} ,
\end{cases}
\intertext{and}
\partial_i(fg) &= \partial_i(f)g + (-1)^{\vert f\vert}f\partial_i(g) , 
\end{align*}
for all $f$, $g\in V$ such that $fg\neq 0$. 

The following can be checked through a simple computation. 
\begin{lem}
  The operators $\partial_i$ satisfy the relations $\partial_i^2=0$, $\partial_i\partial_j+\partial_j\partial_i=0$ if
  $\vert i -j \vert >1$, and $\partial_i\partial_{i+1}\partial_i=\partial_{i+1}\partial_{i}\partial_{i+1}$. 
\end{lem}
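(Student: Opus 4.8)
\subsection*{Proof plan}

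The plan is to exploit the single structural fact that each $\partial_i$ is an \emph{odd superderivation} of $V$, pinned down completely by its values on the generators, namely the scalars $\partial_i(y_k)=\delta_{ik}+\delta_{i+1,k}\in\Bbbk$. (Concretely, $\partial_i$ is contraction by the covector $y_i^*+y_{i+1}^*$, which is why the relations should come out cheaply.) First I would record three elementary facts about superderivations of the exterior algebra $V=\bV(y_1,\dotsc,y_d)$: any superderivation kills $\Bbbk$ and is determined by its action on $y_1,\dotsc,y_d$, so one vanishing on every generator is zero; the anticommutator $DE+ED$ of two odd superderivations $D,E$ is again a superderivation, now \emph{even}, because the signs produced by the twisted Leibniz rule cancel in pairs; and an even superderivation is likewise determined by its values on the generators.

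Granting this, I would settle anticommutation first. Since $\partial_i(y_k)\in\Bbbk$ is central and annihilated by every $\partial_j$, one gets $\partial_j\partial_i(y_k)=\partial_j(\text{scalar})=0$, so $\partial_i\partial_j+\partial_j\partial_i$ is an even superderivation vanishing on all generators, hence is identically zero. I would stress that this argument never uses $\vert i-j\vert>1$: the anticommutation in fact holds for \emph{all} $i,j$, the hypothesis in the statement being only what is needed downstream. The case $i=j$ gives $\partial_i^2(y_k)=0$ in the same way; to deduce $\partial_i^2=0$ over an \emph{arbitrary} unital ring $\Bbbk$ I would avoid dividing the relation $2\partial_i^2=0$ by $2$, and instead check by hand that $\partial_i^2$ satisfies the unsigned Leibniz identity $\partial_i^2(fg)=\partial_i^2(f)g+f\,\partial_i^2(g)$ — the two cross terms, carrying opposite signs $(-1)^{\vert f\vert}$ and $(-1)^{\vert f\vert+1}$, cancel — so that $\partial_i^2$ is an even superderivation killing the generators, whence $\partial_i^2=0$.

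The braid relation then needs no computation at all: combining $\partial_i\partial_{i+1}=-\partial_{i+1}\partial_i$ with $\partial_i^2=\partial_{i+1}^2=0$ yields
\[
\partial_i\partial_{i+1}\partial_i=-\partial_{i+1}\partial_i^2=0,\qquad
\partial_{i+1}\partial_i\partial_{i+1}=-\partial_i\partial_{i+1}^2=0,
\]
so both sides vanish and in particular agree. There is no genuine obstacle here, only two bookkeeping points deserving care: checking that $\partial_i$ is well defined on $V$ (equivalently, that the prescription respects $y_k^2=0$ and $y_ky_l=-y_ly_k$, which is immediate because $\partial_i(y_k)$ is a scalar), and the factor-of-$2$ issue above, which is precisely why I verify the Leibniz identity for $\partial_i^2$ directly rather than appealing to the anticommutator formalism.
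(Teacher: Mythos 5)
Your proof is correct, but it takes a genuinely different route from the paper, which offers no argument at all beyond asserting that the relations "can be checked through a simple computation" (i.e.\ a direct verification on monomials, expanding with the signed Leibniz rule). Your structural argument — identify $\partial_i$ with the interior product $\iota_{y_i^*+y_{i+1}^*}$, observe that a superderivation of $\bV(y_1,\dotsc,y_d)$ vanishing on all generators vanishes identically, and that anticommutators of odd superderivations are even derivations — buys two things the computational check does not make visible: it works uniformly over an arbitrary unital ring $\Bbbk$ (your refusal to divide $2\partial_i^2=0$ by $2$, replaced by the direct check that the cross terms in the Leibniz expansion of $\partial_i^2$ cancel, is exactly the right care), and it proves strictly more than the lemma states, namely that $\partial_i\partial_j+\partial_j\partial_i=0$ for \emph{all} $i,j$, so that both sides of the braid relation are in fact the zero operator. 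That degeneration is a real feature of these contraction operators (in contrast with the even or odd nilHecke settings, where the braid relation is nontrivial), and your proof exposes it. One point worth making explicit: the paper states the Leibniz rule only for pairs $f,g$ with $fg\neq 0$, whereas your composition arguments (e.g.\ showing $\partial_i\partial_j+\partial_j\partial_i$ is a derivation) apply Leibniz to products such as $\partial_j(f)\cdot g$ which may well be zero; this is harmless precisely because of your opening identification of $\partial_i$ with contraction, which is an honest odd superderivation satisfying the unrestricted Leibniz rule and agrees with the paper's prescription on generators — so that identification is not a cosmetic remark but the step that legitimizes the whole formalism, and it deserves to be stated as such.
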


\subsubsection{An action of $\fR(\nu)$ on supercommutative rings}
For $\bi\in\cseq(\nu)$ let
\[
P\bi=\bV(x_{1,1},x_{1,\varepsilon_1},\dotsc , x_{d,1},x_{d,\varepsilon_d} )\bi , 
\]
be an exterior algebra in $\sum_i\nu_i$ generators, and set
\[
P(\nu) =\bigoplus_{\bi\in\cseq(\nu)}P\bi .
\]
We extend the action of $\fS_d$ from $V$ to $P(\nu)$ by declaring that 
\[
w x_{r,1}=x_{w(r),1} ,
\mspace{20mu}
wx_{r,\varepsilon_r}= x_{w(r),\varepsilon_{r+1}}, 
\]
or $w\in\fS_d$.  

Below we denote by $\partial_{u,z}$ the Demazure operator with respect to the variables $u$ and $z$. 

\medskip

To the object $\bi\in\fR(\nu)$ we associate the idempotent $\bi\in P\bi$. 
The defining generators of $\fR(\nu)$ act on $P$ as follows. 
A diagram $D$ acts as zero on $P\bi$ unless the sequence of labels in the bottom of $D$ is $\bi$. 

\smallskip 
\n$\bullet$ Dots 
\[
\tikz[very thick,scale=1.5,baseline={([yshift=0ex]current bounding box.center)}]{
  \draw[mygreen] (0,-.5) -- (0,.5)node[midway,fill=mygreen,circle,inner sep=2pt]{};
  \node at (-.2,-.4) {\tiny $i_r$}; 
} \colon p\bi\mapsto x_{r,1}p\bi , 
\]
\n$\bullet$ Splitters
\begin{equation}\label{eq:splittersaction}
\tikz[very thick,scale=1.5,baseline={([yshift=-.8ex]current bounding box.center)}]{
  \draw[doublegreen] (0,-.4) -- (0,0);
  \draw[mygreen] (-.015,0) .. controls (0,.12) and (-.25,.12) .. (-.3,.5);
  \draw[mygreen] (.015,0) .. controls (0,.12) and (.25,.12) ..  (.3,.5);
\node at (-.2,-.25) {\tiny $i_r$}; 
} \colon p\bi\mapsto \partial_{x_{r,1},x_{r,2}}(p)\bi ,
\mspace{80mu}
\tikz[very thick,scale=1.5,baseline={([yshift=-.8ex]current bounding box.center)}]{
  \draw[doublegreen] (0,.4)  -- (0,0);
  \draw[mygreen] (-.015,0) .. controls (0,-.12) and (-.25,-.12) .. (-.3,-.5);
  \draw[mygreen] (.015,0) .. controls (0,-.12) and (.25,-.12) ..  (.3,-.5);
  \node at (-.2,.25) {\tiny $i_r$};
} \colon p\bi\mapsto x_{r,1}\partial_{x_{r,1},x_{r,2}}(p)\bi , 
\end{equation}

\n$\bullet$ Crossings
\begingroup\allowdisplaybreaks
\begin{align}\nonumber
\tikz[very thick,scale=1,baseline={([yshift=.8ex]current bounding box.center)}]{
  \draw[myblue] (0,-.5) node[below] {\textcolor{black}{\tiny $i_r$}}   .. controls (0,0) and (1,0) ..  (1,.5);
  \draw[mygreen] (1,-.5) node[below] {\textcolor{black}{\tiny $i_{r+1}$}}  .. controls (1,0) and (0,0) ..  (0,.5);
} & \colon p\bi \mapsto 
\begin{cases}
  r_{i_r}\partial_{x_{r,1},x_{r+1,1}}(p)\bi & \text{if}\quad i_r=i_{r+1},
  \\[1ex]
   (t_{i_{r+1}i_r}x_{r,1}+t_{i_ri_{r+1}}x_{r+1,1})s_r(p\bi) & \text{if}\quad i_r=i_{r+1}+1, 
  \\[1ex]
  s_r(p\bi)  & \text{else} , 
  \end{cases}
\\[2ex] \label{eq:thickX22}
\tikz[very thick,baseline={([yshift=.9ex]current bounding box.center)}]{
       \draw[doublegreen] (0,-.5) node[below] {\tiny $i_r$} .. controls (0,0) and (1,0) .. (1,.5);
       \draw[doubleblue] (1,-.5) node[below] {\tiny $i_{r+1}$}  .. controls (1,0) and (0,0) .. (0,.5);
  } &\colon p\bi\mapsto 
\begin{cases}
  0  & \text{if}\quad i_r=i_{r+1},\ \text{ or }\quad i_s=i_{s+1}+1, 
  \\[1ex]
  s_r(p\bi)  & \text{else} , 
  \end{cases}
\\[2ex]  \label{eq:thickX21}
\tikz[very thick,baseline={([yshift=.9ex]current bounding box.center)}]{
       \draw[doublegreen] (0,-.5) node[below] {\tiny $i_r$} .. controls (0,0) and (1,0) .. (1,.5);
       \draw[myblue] (1,-.5) node[below] {\textcolor{black}{\tiny $i_{r+1}$}}  .. controls (1,0) and (0,0) .. (0,.5);
} &\colon p\bi \mapsto 
\begin{cases}
  0  & \text{if}\quad i_r=i_{r+1},
  \\[1ex]
 f_{2,1}(x_{r,1},x_{r,2},x_{r+1,1}) s_r(p\bi) & \text{if}\quad i_s=i_{s+1}+1, 
  \\[1ex]
  s_r(p\bi)  & \text{else} , 
  \end{cases}
\\[2ex]  \label{eq:thickX12}
\tikz[very thick,baseline={([yshift=.9ex]current bounding box.center)}]{
       \draw[mygreen] (0,-.5) node[below] {\textcolor{black}{\tiny $i_r$}} .. controls (0,0) and (1,0) .. (1,.5);
       \draw[doubleblue] (1,-.5) node[below] {\tiny $i_{r+1}$}  .. controls (1,0) and (0,0) .. (0,.5);
} &\colon p\bi \mapsto 
\begin{cases}
  0  & \text{if}\quad i_r=i_{r+1},
  \\[1ex]
 f_{1,2}(x_{r,1},x_{r+1,1},x_{r+1,2}) s_r(p\bi) & \text{if}\quad i_s=i_{s+1}+1, 
  \\[1ex]
  s_r(p\bi)  & \text{else} , 
  \end{cases}
\end{align}
\endgroup
where
\begin{align*}
   f_{2,1}(x_{r,1},x_{r,2},x_{r+1,1}) &= t_{i_ri_{r+1}}t_{i_{r+1}i_r} x_{r,1}x_{r+1,1}+ t_{i_ri_{r+1}}t_{i_{r+1}i_r} x_{r,1}x_{r,2} + t_{i_{r+1}i_r}^2 x_{r,2}x_{r+1,1} . 
\\
f_{1,2}(x_{r,1},x_{r+1,1},x_{r+1,2}) &=
-t_{i_ri_{r+1}}^2 x_{r,1}x_{r,2}+ t_{i_ri_{r+1}}t_{i_{r+1}i_r} x_{r,2}x_{r+1,1} - t_{i_{r+1}i_r}t_{i_{r+1}i_r} x_{r,1}x_{r+1,1} . 
\end{align*}

\begin{prop}\label{prop:basis}
The assignment above defines an action of $\fR(\nu)$ on $P(\nu)$. 
\end{prop}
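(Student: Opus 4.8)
The plan is to verify that the assignment respects every defining relation of $\fR(\nu)$, namely~\eqref{eq:chronology} through~\eqref{eq:dpitchforks}. Since all relations are local, it suffices to check each one on an arbitrary element $p\bi\in P\bi$ for every admissible coloring of the strands appearing in the relation; the action of a diagram is zero unless the bottom sequence matches $\bi$, so only finitely many colorings per relation are relevant. I would organize the verification by first recording the basic facts I will use repeatedly: the operators $\partial_i$ satisfy $\partial_i^2=0$, the braid and anticommutation relations (the preceding Lemma), and the twisted Leibniz rule $\partial_i(fg)=\partial_i(f)g+(-1)^{\vert f\vert}f\partial_i(g)$; moreover $s_r$ is an algebra automorphism of the exterior algebra while $\partial_{u,z}$ is an odd derivation. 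The parity bookkeeping is dictated by the assignments: dots and the upward splitter act by odd operators (multiplication by a generator, resp.\ $\partial$), matching their declared parity $1$.

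\textbf{Order of verification.} First I would dispatch~\eqref{eq:chronology}, the supercommutativity/chronology relation, which reduces to checking that spatially separated generators act by operators that either commute or anticommute according to the product of their parities; this is immediate from the fact that disjoint generators touch disjoint variables and that odd operators on an exterior algebra supercommute. Next~\eqref{eq:dotnil} and~\eqref{eq:dotslides}--\eqref{eq:dotslide-nilH}: the nilHecke-type relation~\eqref{eq:dotslide-nilH} is the statement that $\partial_{x_{r,1},x_{r+1,1}}$ acts as a Demazure operator on the two simple strands, so the two sides differ by the defining relation $\partial(xy)-\ldots$; relation~\eqref{eq:dotnil} is $x_{r,1}^2=0$ in the exterior algebra; and the dot-slide~\eqref{eq:dotslides} across a crossing is the interaction between $s_r$ (which permutes variables) and multiplication by a generator, producing the sign $(-1)^{p_{ij}}$. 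Then the splitter/digon relations~\eqref{eq:dumbelXing}--\eqref{eq:digons} follow from $\partial_{x_{r,1},x_{r,2}}^2=0$ together with the composite $x_{r,1}\partial_{x_{r,1},x_{r,2}}$, giving the digon values $0$ and the identity as claimed.

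\textbf{The crossing relations are the crux.} The main obstacle will be~\eqref{eq:klrR2} and the two Reidemeister-III--type relations~\eqref{eq:klrR3}--\eqref{eq:R3serre}, together with the pitchfork relations~\eqref{eq:lpitchforks}--\eqref{eq:dpitchforks} that mix single and double strands through the explicit polynomials $f_{2,1}$ and $f_{1,2}$. For~\eqref{eq:klrR2} I would treat the three cases $i=j$, $\vert i-j\vert>1$, $\vert i-j\vert=1$ separately: the double-crossing $s_r^2=\id$ produces the scalar and dot corrections exactly when one tracks the $t_{ij}$-coefficients built into the crossing action. The genuinely delicate computations are the thick crossings~\eqref{eq:thickX22}--\eqref{eq:thickX12}: one must verify that the polynomial prefactors $f_{2,1}$ and $f_{1,2}$ are compatible with the splitter action~\eqref{eq:splittersaction}, i.e.\ that resolving a double strand into two simple strands via a splitter and then applying simple crossings reproduces the prescribed thick-crossing operator. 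Concretely, I expect that $f_{2,1}$ and $f_{1,2}$ are precisely engineered so that $\partial_{x_{r,1},x_{r,2}}$ of the appropriate product yields the simple-crossing polynomial $t_{i_{r+1}i_r}x+t_{i_ri_{r+1}}x'$ appearing in the $\vert i-j\vert=1$ case; checking the pitchfork relations then amounts to the compatibility of $\partial$ with $s_r$ and with these polynomial multiplications, a routine but lengthy computation in the exterior algebra. The hardest single verification is likely~\eqref{eq:klrR3}, where the sign $(-1)^{p_{jk}p_{ik}+p_{jk}p_{ij}+p_{ik}p_{ij}}$ must emerge from the reordering of three $s$-permutations and the parities of the three crossings acting on a three-variable exterior subalgebra; I would handle it by reducing to the underlying $\fS_3$ braid relation and separately matching the sign via the parity assignments $p_{ij}$.
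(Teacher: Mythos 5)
Your plan is viable and correctly identifies the key verifications, but it takes a genuinely different (and considerably more laborious) route than the paper. You propose to check \emph{every} defining relation \eqref{eq:chronology}--\eqref{eq:dpitchforks} directly on $P(\nu)$, including all the relations involving double strands (the digon relations \eqref{eq:dumbelXing}--\eqref{eq:digons}, the pitchforks \eqref{eq:lpitchforks}--\eqref{eq:dpitchforks}, and the thick crossings \eqref{eq:thickX22}--\eqref{eq:thickX12}). The paper instead verifies by direct computation only the relations among \emph{simple}-strand generators, i.e.\ the relations of $\overline{\fR}(\nu)$ (it spells out the Serre-type relation \eqref{eq:R3serre} as a sample), and then disposes of the entire thick part structurally: following the thick calculus of \cite{KLMS,stosic}, the double-strand identity, both splitters, and all mixed crossings are expressed as explicit composites of simple crossings and dots; one then computes that these composites act on $P(\nu)$ exactly by the operators \eqref{eq:splittersaction}, \eqref{eq:thickX22}, \eqref{eq:thickX21} and \eqref{eq:thickX12}, so the thick relations are inherited from the already-verified thin ones rather than checked one by one. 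You actually brush against this idea when you observe that resolving a double strand through a splitter and applying simple crossings should reproduce the thick-crossing operator, and that $f_{2,1}$, $f_{1,2}$ are ``engineered'' for this --- in the paper's argument this observation is not one check among many but the organizing principle that eliminates most of the case analysis. The trade-off: your direct verification is self-contained and does not rely on the thick-calculus identities holding as consequences of the thin relations, whereas the paper's reduction is far shorter but implicitly leans on that diagrammatic fact. One point of caution if you carry out your version: on sequences containing $i^{(2)}$ the object acts through the idempotent $x_{r,1}\partial_{x_{r,1},x_{r,2}}$ (note $\partial(x_{r,1}\partial(p))=\partial(p)$, so this is indeed idempotent), and the digon relations \eqref{eq:digons} hold as identities on the image of this idempotent, not on all of $\bV(x_{r,1},x_{r,2})$; your phrasing ``giving the digon values $0$ and the identity'' needs this qualification to be literally true.
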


\begin{proof}
By a long and rather tedious computation one can check that the operators above satisfy the defining relations of $\fR(\nu)$.

The relations involving the action of the generators of $\overline{\fR}(\nu)$ 
are easy to check by direct computation. 
For example, for $\nu=2i+j$, with $j=i+1$ we have
\begingroup\allowdisplaybreaks
\begin{align*}
\tikz[very thick,baseline={([yshift=.9ex]current bounding box.center)}]{
\draw[mygreen]  +(0,0)node[below] {\textcolor{black}{\tiny $i$}} .. controls (0,0.5) and (2, 1) ..  +(2,2);
\draw[mygreen]  +(2,0)node[below]{\textcolor{black} {\tiny $i$}} .. controls (2,1) and (0, 1.5) ..  +(0,2);
\draw[myblue] (1,0)node[below] {\textcolor{black}{\tiny $j$}} ..
controls (1,0.5) and (0, 0.5) ..  (0,1) .. controls (0,1.5) and (1, 1.5) ..  (1,2);
      } (f) &= (t_{ij}x_1+t_{ji}x_2)s_1r_i\partial_2s_1(f) ,
\intertext{and}
\tikz[very thick,baseline={([yshift=.9ex]current bounding box.center)}]{
\draw[mygreen]  +(0,0)node[below] {\textcolor{black}{\tiny $i$}} .. controls (0,1) and (2, 1.5) ..  +(2,2);
\draw[mygreen]  +(2,0)node[below] {\textcolor{black}{\tiny $i$}} .. controls (2,.5) and (0, 1) ..  +(0,2);
\draw[myblue]  (1,0)node[below] {\textcolor{black}{\tiny $j$}} ..
controls (1,0.5) and (2, 0.5) ..  (2,1) .. controls (2,1.5) and (1, 1.5) .. (1,2);
}(f) &= s_2r_i\partial_1(t_{ij}x_2+t_{ji}x_3)s_2(f)
=  r_it_{ij}f - (t_{ij}x_1+t_{ji}x_2)s_1r_i\partial_2s_1(f)  , 
\end{align*}
\endgroup
and so, for any $f(x_1,x_2,x_3)\in Piji$, 
\[
         \tikz[very thick,baseline={([yshift=.9ex]current bounding box.center)}]{
		\draw[mygreen]  +(0,0)node[below] {\textcolor{black}{\tiny $i$}} .. controls (0,0.5) and (2, 1) ..  +(2,2);
		\draw[mygreen]  +(2,0)node[below] {\textcolor{black}{\tiny $i$}} .. controls (2,1) and (0, 1.5) ..  +(0,2);
		\draw[myblue]  (1,0)node[below] {\textcolor{black}{\tiny $j$}} .. controls (1,0.5) and (0, 0.5) ..  (0,1) .. controls (0,1.5) and (1, 1.5) ..  (1,2);
        }(f)
 	\mspace{10mu}+\mspace{10mu}
	 \tikz[very thick,baseline={([yshift=.9ex]current bounding box.center)}]{
		\draw[mygreen]  +(0,0)node[below] {\textcolor{black}{\tiny $i$}} .. controls (0,1) and (2, 1.5) ..  +(2,2);
		\draw[mygreen]  +(2,0)node[below] {\textcolor{black}{\tiny $i$}} .. controls (2,.5) and (0, 1) ..  +(0,2);
		\draw[myblue]  (1,0)node[below] {\textcolor{black}{\tiny $j$}} .. controls (1,0.5) and (2, 0.5) ..  (2,1) .. controls (2,1.5) and (1, 1.5) .. (1,2);
	 }(f)
         \mspace{10mu}=\mspace{10mu}
	 r_it_{ij}\
         \tikz[very thick,baseline={([yshift=.9ex]current bounding box.center)}]{
           \draw[mygreen] (  0,0) node[below] {\textcolor{black}{\tiny $i$}} -- (0,2);
 \draw[myblue] (1,0) node[below] {\textcolor{black}{\tiny $j$}} -- (1,2);
\draw[mygreen] (2,0) node[below] {\textcolor{black}{\tiny $i$}} -- (2,2);
} (f)  . 
 \]

Setting as in~\cite{KLMS}, 
\[
\tikz[very thick,scale=1.5,baseline={([yshift=.8ex]current bounding box.center)}]{
  \draw[doublegreen] (1.5,-.5) node[below] {\tiny $i$}  -- (1.5,.5);
} :=
\tikz[very thick,xscale=1,yscale=1,baseline={([yshift=.8ex]current bounding box.center)}]{
\draw[mygreen] (0,-.75) node[below] {\textcolor{black}{\tiny $i$}}  .. controls (0,0) and (.75,0) .. (.75,.75);
\draw[mygreen] (.75,-.75) node[below] {\textcolor{black}{\tiny $i$}}  .. controls (.75,0) and (0,0) .. (0,.75)node [near end,fill=mygreen,circle,inner sep=2pt]{};;
} 
\mspace{80mu}
\tikz[very thick,scale=1.5,baseline={([yshift=.8ex]current bounding box.center)}]{
  \draw[doublegreen] (0,-.5)node[below] {\tiny $i$} -- (0,0);
  \draw[mygreen] (-.015,0) .. controls (0,.12) and (-.25,.12) .. (-.3,.5);
  \draw[mygreen] (.015,0) .. controls (0,.12) and (.25,.12) ..  (.3,.5);
}:=
\tikz[very thick,xscale=1,yscale=1,baseline={([yshift=.8ex]current bounding box.center)}]{
\draw[mygreen] (0,-.75) node[below] {\textcolor{black}{\tiny $i$}} .. controls (0,0) and (.75,0) .. (.75,.75);
\draw[mygreen] (.75,-.75) node[below] {\textcolor{black}{\tiny $i$}}  .. controls (.75,0) and (0,0) .. (0,.75);
}
\mspace{80mu}
\tikz[very thick,scale=1.5,baseline={([yshift=.8ex]current bounding box.center)}]{
  \draw[doublegreen] (0,0)  -- (0,.5);
  \draw[mygreen] (-.015,0) .. controls (0,-.12) and (-.25,-.12) .. (-.3,-.5)node[below] {\textcolor{black}{\tiny $i$}};
  \draw[mygreen] (.015,0) .. controls (0,-.12) and (.25,-.12) ..  (.3,-.5)node[below] {\textcolor{black}{\tiny $i$}};
  }:=
\tikz[very thick,xscale=1,yscale=1,baseline={([yshift=.8ex]current bounding box.center)}]{
\draw[mygreen] (0,-.75) node[below] {\textcolor{black}{\tiny $i$}}  .. controls (0,0) and (.75,0) .. (.75,.75);
\draw[mygreen] (.75,-.75) node[below] {\textcolor{black}{\tiny $i$}}  .. controls (.75,0) and (0,0) .. (0,.75)node [near end,fill=mygreen,circle,inner sep=2pt]{};;
} 
\]
and
\[
  \tikz[very thick,scale=1.2,baseline={([yshift=.9ex]current bounding box.center)}]{
       \draw[mygreen] (0,-.5) node[below] {\textcolor{black}{\tiny $i$}} .. controls (0,0) and (1,0) .. (1,.5);
       \draw[doubleblue] (1,-.5) node[below] {\tiny $j$}  .. controls (1,0) and (0,0) .. (0,.5);
  } :=
\tikz[very thick,scale=1.2,baseline={([yshift=.8ex]current bounding box.center)}]{
 \draw[mygreen] (0,-.5) node[below] {\textcolor{black}{\tiny $i$}} .. controls (0,0) and (1,0) .. (1,.5);
 \draw[doubleblue] (0.15,.35) -- (0,.5);  \draw[doubleblue] (.875,-.35) -- (1,-.5)node[below] {\tiny $j$};
 \draw[myblue] (.15,.35) .. controls (0,.1)  and (0,-.2) .. (0.875,-.35);
 \draw[myblue] (.15,.35) .. controls (1,.2)  and (1,-.1) .. (0.875,-.35);
 \node[fill=myblue,circle,inner sep=2pt] at (.55,-.275) {};
}
\mspace{80mu}
  \tikz[very thick,,scale=1.2,baseline={([yshift=.9ex]current bounding box.center)}]{
       \draw[myblue] (-0,-.5) node[below] {\textcolor{black}{\tiny $j$}} .. controls (0,0) and (-1,0) .. (-1,.5);
       \draw[doublegreen] (-1,-.5) node[below] {\tiny $i$}  .. controls (-1,0) and (0,0) .. (0,.5);
  } :=
\tikz[very thick,scale=1.2,baseline={([yshift=.8ex]current bounding box.center)}]{
 \draw[myblue] (0,-.5) node[below] {\textcolor{black}{\tiny $j$}} .. controls (0,0) and (-1,0) .. (-1,.5);
 \draw[doublegreen] (-0.15,.35) -- (0,.5);  \draw[doublegreen] (-.875,-.35) -- (-1,-.5)node[below] {\tiny $i$};
 \draw[mygreen] (-.15,.35) .. controls (0,.1)  and (0,-.2) .. (-0.875,-.35);
 \draw[mygreen] (-.15,.35) .. controls (-1,.2)  and (-1,-.1) .. (-0.875,-.35);
 \node[fill=mygreen,circle,inner sep=2pt] at (-.55,-.275) {};
}
\mspace{80mu}
  \tikz[very thick,,scale=1.2,baseline={([yshift=.9ex]current bounding box.center)}]{
       \draw[doubleblue] (-0,-.5) node[below] {\tiny $j$} .. controls (0,0) and (-1,0) .. (-1,.5);
       \draw[doublegreen] (-1,-.5) node[below] {\tiny $i$}  .. controls (-1,0) and (0,0) .. (0,.5);
  } :=
\tikz[very thick,scale=1.2,baseline={([yshift=.8ex]current bounding box.center)}]{
 \draw[doublegreen] (-0.15,.35) -- (0,.5);  \draw[doublegreen] (-.875,-.35) -- (-1,-.5)node[below] {\tiny $i$};
 \draw[mygreen] (-.15,.35) .. controls (-.1,.1)  and (-.1,-.2) .. (-0.875,-.35);
 \draw[mygreen] (-.15,.35) .. controls (-.9,.2)  and (-.9,-.1) .. (-0.875,-.35);
 \node[fill=mygreen,circle,inner sep=1.5pt] at (-.7,-.32) {};
 \draw[doubleblue] (-.85,.35) -- (-1,.5);  \draw[doubleblue] (-.125,-.35) -- (0,-.5)node[below] {\tiny $j$};
 \draw[myblue] (-.85,.35) .. controls (-.9,.1)  and (-.9,-.2) .. (-.125,-.35);
 \draw[myblue] (-.85,.35) .. controls (-.1,.2)  and (-.1,-.1) .. (-.125,-.35);
 \node[fill=myblue,circle,inner sep=1.5pt] at (-.72,.32) {};
}
\] 
then it follows that the action of the generators of $\fR(\nu)$  on $P(\nu)$ is given by the
operators~\eqref{eq:splittersaction}, \eqref{eq:thickX22}, \eqref{eq:thickX21} and ~\eqref{eq:thickX12}
and satisfy the defining relations of $\fR(\nu)$.   
\end{proof}

%
%
\section{A topological invariant}\label{sec:tangOKH}

In~\cite{tubbenhauer}  $q$-skew Howe duality is used to show how to write as a web
in a form 
that uses only the
lower part of $U_q(\glk)$. In this language, the formula for the $\slt$-commutator becomes one of Luzstig's higher quantum Serre relations from~\cite[\S7]{Lu}.
It is also proved in~\cite{tubbenhauer}
that this results in a well defined evaluation
of closed webs allowing to write any link diagram as a linear combination of words in the various $F_i$ in $U^-:=U_q^-(\glk)$.

\smallskip

This allows a categorification of webs using only (cyclotomic) KLR algebras~\cite{KL1,R1} instead of the whole
2-quantum group  $\cU(\glk)$~\cite{KL3,R1}. 
In this context, the unit and co-unit maps of the several adjunctions in $\cU(\glk)$ that are used as differentials in 
the Khovanov--Rozansky chain complex can be written as composition with elements of the KLR algebra. 
Taking cyclotomic KLR algebras of level 2 gives Khovanov homology. 
The approach in~\cite{tubbenhauer} is easily adapted to tangles, which we do in 
in this section for level 2 in the context of the supercategories introduced in~\fullref{sec:skewKLR}.

\subsection{Supercategorification of $\glt$-webs and flat tangles}\label{sec:scatflat}

Our webs have strands labeled from $\{0,1,2\}$ which we depict as ``invisible'', ``simple'', and ``double'',
as in the example below. All the strands point either up or to the right and sometimes we omit the orientations in 
the pictures.
\[    \tikz[very thick,scale=.25,baseline={([yshift=-.8ex]current bounding box.center)}]{
	\draw (-2,-4)node[below] {\tiny $1$} to (-2,0);
	\draw [directed=1] (-2,0) [dotted] to (-2,4)node[above] {\tiny $0$};
	\draw [doubleblack] (2,-4)node[below] {\tiny $2$} to (2,-2);	\draw (2,-2) to (2,0);
	\draw [directed=1] (2,2.5) to (2,4)node[above] {\tiny $1$};
      	\draw [doubleblack] (2,0) to (2,2.5);
	\draw [rdirected=.55] (2,0) to (-2,0);	\draw [rdirected=.55] (6,-2) to (2,-2);
	\draw [dotted] (6,-4)node[below] {\tiny $0$} to (6,-2);	\draw (6,-2) to (6,2.5);
       	\draw [doubleblack] (6,2.5) to (6,4)node[above] {\tiny $2$};
	\draw [rdirected=.55] (6,2.5) to (2,2.5); }
\]

\smallskip

For $\lambda=(\lambda_1,\dotsm ,\lambda_k)\in\{0,1,2\}^k$ and $\epsilon\in\{0,1\}$ with
$\vert\lambda\vert=2\ell+\epsilon$, we put $\Lambda=(2)^{\ell}\epsilon=(2,\dots,2,\epsilon,0,\dotsc ,0)$
and we define 
\[
\fW(\lambda) = \HOM_{\cR^\Lambda(k,\vert\lambda\vert)}(\Lambda,\lambda) . 
\]

\medskip

Let $W$ be a $\glt$-web with all ladders pointing to the right.
Suppose that $W$ has the bottom boundary labelled $\lambda$ and the
top boundary labelled $\mu$, whith $\lambda,\mu\in\{ 0,1,2\}^k$ and  $\vert\lambda\vert =\vert\mu\vert$.
We write $W$ as a word in the $F_i$ in $U^-_q(\glk)$ applied to a vector $v_\lambda$ of $\glk$-weight $\lambda$. 
\[
\tikz[very thick,baseline={([yshift=-.8ex]current bounding box.center)}]{
\draw (-1,-.5) -- (1,-.5);\draw (-1, .5) -- (1, .5);
\draw (-1,-.5) -- (-1,.5);\draw (1,-.5) -- (1,.5);
\node at (0,0) {$W$};
\draw (-.75,-1)node[below]{\tiny $\lambda_1$} -- (-.75,-.5);
\node at (0, -.85) {$\dotsm$};
\draw (.75,-1)node[below]{\tiny $\lambda_k$} -- (.75,-.5);
\draw (-.75,1)node[above]{\tiny $\mu_1$} -- (-.75,.5);
\node at (0, .85) {$\dotsm$};
\draw (.75,1)node[above]{\tiny $\mu_k$} -- (.75,.5);
} = F_{i_1}\dotsm F_{i_r}(v_\lambda) . 
\]
This gives a 1-morphism $F(W)$ in $\cR(k,\vert\lambda\vert)$. 
Composition of 1-morphisms in $\cR(k,\vert\lambda\vert)$ defines a superfunctor
\[
\fF(W)\colon \fW(\lambda) \to \fW(\mu) .  
\]
 If $\lambda$ is dominant and $\mu$ is antidominant then $\fF(W)$ is a superfunctor
 from $\Bbbk\smod$ to $\Bbbk\smod$ that is, a direct sum of grading shifts
 of the identity superfunctor. 
 In this case,  there is a canonical 1-morphism $F_{{can}}(W)$ in $\Hom_{\cR^\Lambda(k,\vert\lambda\vert)}(\lambda,\mu)$ 
\begin{equation}\label{eq:canF} 
  F_{can} =
  F_{ (k-\ell-1)^{(2)}\dotsm (1)^{(2)}}
    \dotsm
  F_{ (k-3)^{(2)}\dotsm (\ell-1)^{(2)}}
  F_{ (k-2)^{(2)}\dotsm \ell^{(2)} }
  F_{ (k-1)^{(\epsilon)}\dotsm (\ell+1)^{(\epsilon)} }
  (2)^{\ell}\epsilon , 
\end{equation}
which in terms of webs takes the form of the following example:    
\[
\dotsm\mspace{30mu} 
\tikz[very thick,scale=.225,baseline={([yshift=0ex]current bounding box.center)}]{
\draw[doubleblack] (-2,-15)node[below] {\tiny $2$} to (-2,-3);\draw [dotted](-2,-3) to (-2,3); 
\draw[doubleblack] ( 2,-15)node[below] {\tiny $2$} to (2,-8);\draw[dotted] (2,-8) to (2,-3);
      \draw[doubleblack] (2,-3) to (2,-1);\draw[dotted] (2,-1) to (2,3);   
\draw (6,-15)node[below] {\tiny $1$} to (6,-13);\draw[dotted] (6,-13) to (6,-8);
      \draw[doubleblack] (6,-8) to (6,-6);\draw[doubleblack] (6,-8) to (6,-6);\draw[dotted] (6,-6) to (6,-1);
      \draw[doubleblack] (6,-1) to (6,1);\draw[dotted] (6,1) to (6,3);
\draw [dotted] (10,-15)node[below] {\tiny $0$} to (10,-13);\draw (10,-13) to (10,-11);
      \draw[dotted] (10,-11) to (10,-6);\draw[doubleblack] (10,-6) to (10,-4);\draw[dotted] (10,-4) to (10,1);
       \draw[doubleblack] (10,1) to (10,3);   
\draw [dotted] (14,-15)node[below] {\tiny $0$} to (14,-11);\draw (14,-11) to (14,-9);
       \draw[dotted] (14,-9) to (14,-4);\draw[doubleblack] (14,-4) to (14,3);
\draw [dotted] (18,-15)node[below] {\tiny $0$} to (18,-9);\draw [->] (18,-9) to (18,3);
\draw [doubleblack] (-2,-3) to (2,-3);
\draw [doubleblack] (2,-1) to (6,-1);
\draw [doubleblack] (6,1) to (10,1);
\draw [doubleblack] (2,-8) to (6,-8);
\draw [doubleblack] (6,-6) to (10,-6);
\draw [doubleblack] (10,-4) to (14,-4);
\draw (6,-13) to (10,-13);
\draw (10,-11) to (14,-11);
\draw (14,-9) to (18,-9);
}
\mspace{30mu}\dotsm  
\]
We have that $\fF(W)=\Hom_{\cR^\lambda(k,\vert\lambda\vert)}(\lambda,\mu)$ is isomorphic to the graded
$\Bbbk$-supervector space 
$\Hom_{\fR^\Lambda}(F_{can}(W),F(W))$.

\subsection{The chain complex}

As explained in~\cite{tubbenhauer} any oriented tangle diagram $T$ can be written in the form of a web $W_T$ with
all horizontal strands pointing to the right. In this case we say that \emph{$T$ is in $F$-form}.

\begin{ex}\label{ex:hopf}
For the Hopf link we have the following web diagram. 
\[
    \tikz[very thick,scale=.25,baseline={([yshift=-.8ex]current bounding box.center)}]{
\draw [doubleblack](-2,-4)node[below] {\tiny $2$} to (-2,2);\draw (-2,2) to (-2,7);\draw[dotted] (-2,7) to (-2,11.5);
\draw [doubleblack]( 2,-4)node[below] {\tiny $2$} to (2,-2.5);	\draw (2,-2.5) to ( 2,1);
\draw (2,3) to (2,5);\draw[dotted] (2,5) to (2,7);\draw (2,7) to (2,9);\draw[dotted] (2,9) to (2,11.5);   
\draw [dotted] (6,-4)node[below] {\tiny $0$} to (6,-2.5);\draw (6,-2.5) to ( 6,-.5);\draw (6,2) to (6,4); 
     \draw (6,6) to (6,9);\draw[doubleblack] (6,9) to (6,11.5); 
\draw [dotted] (10,-4)node[below] {\tiny $0$} to (10,-.5);\draw (10,-.5) to (10,5);\draw[doubleblack] (10,5) to (10,11.5);
	\draw [directed=.55] (2,-2.5) to (6,-2.5);
	\draw [directed=.55] (6,-.5) to (10,-.5);
	\draw [directed=.35] (-2,2) to (6,2);
	\draw [dotted] (6,-.5) to ( 6,1.5);
      	\draw [directed=.35] (2,5) to (10,5);
       	\draw [directed=.55] (-2,7) to (2,7);
       	\draw [directed=.55] (2,9) to (6,9);
\draw (2,2.5) to (2,4);
\draw[green] (-5,.5) -- (13,.5);\draw[green] (-5,3.25) -- (13,3.25);\draw[green] (-5,6.25) -- (13,6.25);
    }
\]
\end{ex}

Suppose the bottom boundary of $W_T$ is $(\lambda_1,\dotsm,\lambda_k)$ and the top boundary is $(\mu_1,\dotsm,\mu_k)$. 
Let $\Kom(\lambda,\mu)$ be the category of complexes of $\HOM_{\cR(k,\vert\lambda\vert)}(\fW(\lambda),\fW(\mu))$
and $\Kom_{/h}(\lambda,\mu)$ its homotopy category (these are not supercategories).  
To each tangle in $F$-form as above we associate an object in $\Kom_{/h}(\lambda,\mu)$ as follows.
The usual constructions with chain complexes (homomorphisms, homotopies, cones, etc.) work in the same way
as with nonsupercategories. Since we are in a supercategory, some signs have to be introduced when taking tensor products (further details will appear in a follow-up paper).

We first chop the diagram vertically in such  way that each slice contains either a web without crossings, or 
a single crossing together with vertical pieces (as in~\fullref{ex:hopf}).
Each slice then gives either a superfunctor or a complex of superfunctors, as explained below.
By composition we get a complex
${\fF}(W_T)$ of superfunctors from $\fW(\lambda)$ to $\fW(\mu)$. 

\subsubsection{Basic tangles}
\begin{itemize}
\item If $T$ is a flat tangle, then we're done by~\fullref{sec:scatflat}. 

\item To the positive crossing we associate the chain complex
\begin{equation}\label{eq:pXing}
\tikz[very thick,scale=.25,baseline={([yshift=-.6ex]current bounding box.center)}]{
\draw (2,-1) to (2,1);\draw[->] (2,3) to (2,5);   
\draw (-1,2) to (2,2);\draw[->] (2,2) to (5,2);
}
  \mapsto 
q^{-1}\fF\left(  \tikz[very thick,scale=.25,baseline={([yshift=-.8ex]current bounding box.center)}]{
	\draw (-2,-4)node[below] {\tiny $1$} to (-2,0);
	\draw [directed=1] (-2,0) [dotted] to (-2,4)node[above] {\tiny $0$};
	\draw (2,-4)node[below] {\tiny $1$} to (2,-2.5);
	\draw (2,-2.5)[dotted] to (2,0);
	\draw [directed=1] (2,0) to (2,4)node[above] {\tiny $1$};
	\draw [rdirected=.55] (2,0) to (-2,0);
	\draw [dotted] (6,-4)node[below] {\tiny $0$} to (6,-2.5);
	\draw (6,-2.5) to (6,2);
       	\draw [directed=1] (6,2) to (6,4)node[above] {\tiny $1$};
	\draw [rdirected=.55] (6,-2.5) to (2,-2.5);
}\right) \xra{\tikz[very thick,baseline={([yshift=.9ex]current bounding box.center)}]{
       \draw[mygreen] (0,-.5) .. controls (0,0) and (1,0) .. (1,.5);
       \draw[myblue] (1,-.5) .. controls (1,0) and (0,0) .. (0,.5);
       \node at (-.15,-.425) {\tiny $1$}; \node at (1.15,-.425) {\tiny $2$}; 
  }}
\fF\left(
    \tikz[very thick,scale=.25,baseline={([yshift=-.8ex]current bounding box.center)}]{
	\draw (-2,-4)node[below] {\tiny $1$} to (-2,0);
	\draw [directed=1] (-2,0) [dotted] to (-2,4)node[above] {\tiny $0$};
	\draw (2,-4)node[below] {\tiny $1$} to (2,0);
	\draw [directed=1] (2,2.5) to (2,4)node[above] {\tiny $1$};
      	\draw [doubleblack] (2,0) to (2,2.5);
	\draw [rdirected=.55] (2,0) to (-2,0);
	\draw [dotted] (6,-4)node[below] {\tiny $0$} to (6,2.5);
       	\draw [directed=1] (6,2.5) to (6,4)node[above] {\tiny $1$};
	\draw [rdirected=.55] (6,2.5) to (2,2.5);
    }\right) 
    \end{equation}
with the leftmost term in homological degree zero.     
Algebraically this can be written
\[
\beta_+ \mapsto q^{-1}F_1F_2(1,1,0) \xra{\ \tau_1 \ } F_2F_1(1,1,0) ,
\]
where $\tau$ is the diagram above. 

\item To the negative crossing we associate the chain complex
\begin{equation} \label{eq:nXing}
\tikz[very thick,scale=.25,baseline={([yshift=-.6ex]current bounding box.center)}]{
\draw (2,-1) to (2,2);\draw[->] (2,2) to (2,5);   
\draw (-1,2) to (1,2);\draw[->] (3,2) to (5,2);
}  \mapsto
\fF\left(
    \tikz[very thick,scale=.25,baseline={([yshift=-.8ex]current bounding box.center)}]{
	\draw (-2,-4)node[below] {\tiny $1$} to (-2,0);
	\draw [directed=1] (-2,0) [dotted] to (-2,4)node[above] {\tiny $0$};
	\draw (2,-4)node[below] {\tiny $1$} to (2,0);
	\draw [directed=1] (2,2.5) to (2,4)node[above] {\tiny $1$};
      	\draw [doubleblack] (2,0) to (2,2.5);
	\draw [rdirected=.55] (2,0) to (-2,0);
	\draw [dotted] (6,-4)node[below] {\tiny $0$} to (6,2.5);
       	\draw [directed=1] (6,2.5) to (6,4)node[above] {\tiny $1$};
	\draw [rdirected=.55] (6,2.5) to (2,2.5);
    }\right)\xra{\tikz[very thick,baseline={([yshift=.9ex]current bounding box.center)}]{
       \draw[myblue] (0,-.5) .. controls (0,0) and (1,0) .. (1,.5);
       \draw[mygreen] (1,-.5) .. controls (1,0) and (0,0) .. (0,.5);
       \node at (-.15,-.425) {\tiny $2$}; \node at (1.15,-.425) {\tiny $1$}; 
  }}q\fF\left(
  \tikz[very thick,scale=.25,baseline={([yshift=-.8ex]current bounding box.center)}]{
	\draw (-2,-4)node[below] {\tiny $1$} to (-2,0);
	\draw [directed=1] (-2,0) [dotted] to (-2,4)node[above] {\tiny $0$};
	\draw (2,-4)node[below] {\tiny $1$} to (2,-2.5);
	\draw (2,-2.5)[dotted] to (2,0);
	\draw [directed=1] (2,0) to (2,4)node[above] {\tiny $1$};
	\draw [rdirected=.55] (2,0) to (-2,0);
	\draw [dotted] (6,-4)node[below] {\tiny $0$} to (6,-2.5);
	\draw (6,-2.5) to (6,2);
       	\draw [directed=1] (6,2) to (6,4)node[above] {\tiny $1$};
	\draw [rdirected=.55] (6,-2.5) to (2,-2.5);
  }\right)
\end{equation}
with the rightmost term in homological degree zero.
Algebraically
 \[ 
\beta_-  \mapsto   F_2F_1(1,1,0)\xra{\ \tau_1 \ }qF_1F_2(1,1,0) . 
\]
\end{itemize}

\begin{rem}
  Caution should be taken when applying~\eqref{eq:pXing} and~\eqref{eq:nXing}: when passing from a tangle diagram to it
  $F$-form some crossings may change from positive to negative and vice-versa.
  To have an invariant of all tangles some grading shifts have to be introduced locally whenever this occurs. 
  We shift~\eqref{eq:nXing} by $-1$ in the $q$-grading and $1$ in the homological grading when it comes from a positive crossing  and the opposite whenever~\eqref{eq:nXing} comes from a positive crossing. 
\end{rem}
  
\subsubsection{The normalized complex}

Let $n_{\pm}$ be the number of positive/negative crossings in $W_T$ and let $w=n_+-n_-$ be the writhe of $W_T$. 
We define the normalized complex 
\begin{equation}\label{eq:nrmlzcplx}
\fF(W_T) := q^{2w}\overline{\fF}(W_T) . 
\end{equation}

\subsection{Topological invariance}

\begin{thm}\label{thm:invariance}
For every tangle diagram $T$ the homotopy type of $\fF(W_T)$ is invariant under the Reidemeister moves. 
\end{thm}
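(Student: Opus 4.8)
The plan is to prove homotopy invariance move-by-move, exploiting that $\fF(W_T)$ is assembled by composition and horizontal juxtaposition of the superfunctors attached to flat tangles and the two-term crossing complexes \eqref{eq:pXing} and \eqref{eq:nXing}. A Reidemeister move alters a diagram only inside a small disk, so — once planar-isotopy invariance is in hand (this is the well-definedness of the flat-tangle superfunctors from \fullref{sec:scatflat} together with functoriality of composition in $\cR^\Lambda(k,\vert\lambda\vert)$) — it suffices to exhibit, for each move in a generating set of oriented variants, a homotopy equivalence between the complexes of $1$-morphisms of $\fR^\Lambda$ attached to the two sides, natural under gluing the unchanged part of the tangle to the left and right. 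Every such homotopy is a morphism of $\fR$, so all the work takes place inside the calculus of relations \eqref{eq:chronology}--\eqref{eq:dpitchforks}, and every cancellation is an instance of Gaussian elimination, which is legitimate because $\Kom_{/h}(\lambda,\mu)$ is an ordinary additive homotopy category.

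First I would dispose of R1 and R2. For R2 one composes \eqref{eq:pXing} with \eqref{eq:nXing}: the total complex has four terms, its differential squares to zero once the Koszul signs dictated by \eqref{eq:chronology} are inserted, and the connecting map between the two middle terms becomes invertible after the doubled crossing is resolved by \eqref{eq:klrR2} (and, where a double strand appears, \eqref{eq:digons}); Gaussian elimination then removes a contractible summand and leaves the flat identity. For the R1 curl, passing to $F$-form produces a single crossing capped off, and the associated two-term complex reduces again via \eqref{eq:digons} to the identity up to an overall grading shift, which is exactly absorbed by the writhe normalization \eqref{eq:nrmlzcplx} together with the local shifts flagged in the Remark after \eqref{eq:nXing} (these handle the fact that a crossing may change sign in $F$-form). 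In each case the signs entering the cancellation are controlled by \eqref{eq:dotslides} and \eqref{eq:chronology}.

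The heart of the matter is R3. Each side is a triple tensor product of crossing complexes, producing multi-term complexes that I would first prune by Gaussian elimination of the manifestly contractible pieces (again via \eqref{eq:digons} and \fullref{lem:dotjumpdlb}). The surviving complexes should then be matched by an explicit chain isomorphism built from crossings, with the chain homotopies supplied by the categorified Serre relation \eqref{eq:R3serre} and the thick $R3$ relation \eqref{eq:klrR3}; these are precisely the incarnations of Lusztig's higher quantum Serre relation that underlie Tubbenhauer's evaluation of closed webs, so the combinatorial skeleton of the equivalence is already available.

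The main obstacle I expect is tracking the chronological signs of \eqref{eq:chronology} through the R3 homotopy. Reducing modulo $2$ via \fullref{lem:Risomod} collapses all signs and returns Tubbenhauer's (equivalently Khovanov's) even computation, which fixes the shape of every differential, chain map, and homotopy; what the mod-$2$ picture cannot detect is exactly the sign data, so the genuine content is to promote those maps to honest morphisms of $\fR$ over $\Bbbk$ with consistent signs. The clean way to organize this is to fix a height-order of the generators in each diagram so that \eqref{eq:chronology} pins down all signs unambiguously, and then to verify that $d^2=0$, that the comparison map commutes with the differentials, and that the homotopy identities hold on the nose. The consistency of the relations established in \fullref{sec:skewKLR}, confirmed concretely by the action of \fullref{prop:basis}, is ultimately what forces the required signs to work out.
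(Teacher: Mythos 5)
Your handling of R1 and R2 is essentially the paper's: decompose the crossing complexes \eqref{eq:pXing}--\eqref{eq:nXing} into thick summands via the splitter isomorphisms, simplify the components of the differential with \eqref{eq:klrR2} and \eqref{eq:digons}, cancel the resulting identity-multiples by Gaussian elimination in $\Kom_{/h}$, and let the normalization \eqref{eq:nrmlzcplx} together with the local shifts from the remark absorb the leftover grading shifts. The divergence is at R3, and it is substantial. The paper does not expand both sides into full cubes of resolutions and match them by an explicit chain isomorphism with homotopies drawn from \eqref{eq:R3serre} and \eqref{eq:klrR3}. Instead it follows Putyra's argument: $\fF(D_L)$ is exhibited as the mapping cone of a chain map between the two complexes obtained by resolving one distinguished crossing; one of these partial complexes is shown (an easy exercise) to be isomorphic to the corresponding partial complex for $D_R$; and then the already-established R2 equivalences are transported through the cone. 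No new chain homotopy specific to R3 is ever constructed. This buys a short proof and, crucially, avoids precisely the sign bookkeeping your route must confront head-on.

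That bookkeeping is where your proposal has a genuine gap. Your plan for R3 is to reduce modulo $2$ via \fullref{lem:Risomod}, let the even (Tubbenhauer/Khovanov) computation ``fix the shape'' of every differential, comparison map and homotopy, and then check that consistent signs exist over $\Bbbk$. But in the super setting the existence of a consistent sign assignment is exactly the content of the theorem, not something the mod-$2$ picture can supply: the paper stresses that the odd and even theories agree mod $2$ yet are genuinely distinct, so the mod-$2$ shape underdetermines the maps, and naive sign choices in a multi-crossing cube are precisely what fail to square to zero or to commute with comparison maps in the odd world. Moreover, even forming your ``triple tensor product of crossing complexes'' is not free of charge here: the paper explicitly notes that tensor products of complexes in this supercategory require extra sign corrections, so the cube you propose to prune does not come with a canonical differential. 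In short, the sentence ``verify that $d^2=0$, that the comparison map commutes with the differentials, and that the homotopy identities hold on the nose'' is not a final check — it is the entire proof, and your proposal does not produce the maps or carry out the verification. Either perform that (finite but long, and height-sensitive by \eqref{eq:chronology}) computation explicitly, or adopt the paper's reduction of R3 to R2 plus an isomorphism of partial complexes via the mapping cone.
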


\begin{thm} 
  For every link $L$ the homology of $\fF(L)$ is a $\bZ$-graded supermodule over $\bZ$ whose
  graded Euler characteristic equals the Jones polynomial. 
\end{thm}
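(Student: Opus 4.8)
The plan is to reduce the statement to a graded Euler-characteristic computation and then invoke the decategorification of Tubbenhauer's $q$-Howe duality construction.

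First I would record that a link $L$ is a closed diagram, so its $F$-form $W_L$ has empty top and bottom boundary. In the notation of \fullref{sec:scatflat} this means $\lambda=\mu=\varnothing$ and $\fW(\varnothing)=\Bbbk\smod$, so every superfunctor appearing in $\fF(W_L)$ is an endofunctor of $\Bbbk\smod$ equal to a finite direct sum of grading shifts of the identity; such a functor is determined by a single finitely generated $\bZ$-graded $\Bbbk$-supermodule, namely its value on $\Bbbk$. Taking $\Bbbk=\bZ$ and using that the cyclotomic quotients $\fR^\Lambda(\nu)$ have finite rank in each $q$-degree, $\fF(L)$ is a bounded complex of finitely generated $\bZ$-graded $\bZ$-supermodules, the $\bZ/2\bZ$-grading being the parity $p(-)$. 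Hence its homology is a $\bZ$-graded $\bZ$-supermodule, which is the first assertion.

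Next I would introduce the graded Euler characteristic
\[
\chi_q(\fF(L)) \;=\; \sum_{i} (-1)^i \gdim_q C^i \;\in\; \bZ[q,q^{-1}],
\]
where $C^i$ is the chain supermodule in homological degree $i$ and $\gdim_q$ is its $q$-graded rank over $\bZ$ (the $\bZ/2\bZ$ parity is not recorded by $\gdim_q$, exactly as in the computation of the Euler characteristic of odd Khovanov homology in~\cite{ORS}). Since $\chi_q$ is unchanged under chain homotopy equivalence it equals the graded Euler characteristic of $H(\fF(L))$, and by \fullref{thm:invariance} it is a genuine invariant of $L$. It therefore remains to identify $\chi_q(\fF(L))$ with the Jones polynomial.

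For this I would view $\fF\mapsto\chi_q(\fF)$ as the decategorification of the construction and compare it piece by piece with the web calculus of~\cite{tubbenhauer}. On a flat web $W$ it sends $\fF(W)$ to $\gdim_q\Hom_{\fR^\Lambda}(F_{can}(W),F(W))$, and by \fullref{lem:Risomod} this graded rank agrees with the corresponding one for the ordinary level-$2$ cyclotomic KLR algebra $R^\Lambda(\nu)$; the latter categorifies the relevant weight spaces of the level-$2$ irreducible $U_q(\glk)$-module and therefore reproduces the $U_q(\slt)$ (MOY) evaluation of $W$. On a crossing, the local complexes \eqref{eq:pXing} and \eqref{eq:nXing} have graded Euler characteristics
\[
\chi_q(\beta_+) = q^{-1}[F_1F_2]-[F_2F_1], \qquad \chi_q(\beta_-) = q[F_1F_2]-[F_2F_1],
\]
which are exactly the two smoothings of the Reshetikhin--Turaev $R$-matrix of the vector representation of $U_q(\slt)$ used in~\cite{tubbenhauer}. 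Composing all the local pieces, $\chi_q(\overline{\fF}(W_L))$ equals the unnormalized Reshetikhin--Turaev invariant of $L$ coloured by the vector representation; the global writhe factor $q^{2w}$ of \eqref{eq:nrmlzcplx} then supplies the framing correction, yielding the Jones polynomial. The main obstacle is the flat identification: showing that $\gdim_q\Hom_{\fR^\Lambda}(F_{can}(W),F(W))$ really equals the quantum evaluation of $W$. Granting \fullref{lem:Risomod} and the known decategorification of level-$2$ cyclotomic KLR algebras, this reduces to checking that passing to the super setting does not change the $q$-graded ranks. This is indeed the case: the $\bZ/2\bZ$ structure only inserts signs into the differentials and into the interchange relation \eqref{eq:chronology}, and these signs do not affect the graded dimensions counted by $\gdim_q$; the explicit basis coming from the action on exterior algebras in \fullref{prop:basis} makes this transparent. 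The remaining invariance checks at the level of $\chi_q$ are already subsumed by \fullref{thm:invariance}.
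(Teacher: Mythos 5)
You cannot be matched against the paper's own argument here, because the paper does not contain one: this theorem is stated immediately after \fullref{thm:invariance} and the text passes directly to the proof of invariance, the Euler-characteristic claim being left implicit in the construction. Your decategorification strategy is the natural way to fill this in, and its skeleton is sound: the chain groups are finitely generated graded supermodules, $\chi_q$ ignores parity and is invariant under homotopy equivalence, the local complexes \eqref{eq:pXing} and \eqref{eq:nXing} decategorify to $q^{-1}[F_1F_2]-[F_2F_1]$ and $q[F_1F_2]-[F_2F_1]$, and the factor $q^{2w}$ in \eqref{eq:nrmlzcplx} is exactly the framing correction (one can check these conventions against \fullref{lem:reidIa} and the trefoil computation, which return $q+q^{-1}$ for the unknot).

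Two steps nevertheless need repair. First, a link in $F$-form does \emph{not} have empty boundaries in this formalism: as in \fullref{ex:hopf}, $W_L$ has dominant bottom boundary $(2,\dotsc,2,\epsilon,0,\dotsc,0)$ and antidominant top boundary, and the statement you actually need is the one in \fullref{sec:scatflat}, that for such boundaries $\fF(W)$ is a direct sum of grading shifts of the identity functor on $\Bbbk\smod$, with chain groups $\Hom_{\fR^\Lambda}(F_{can}(W),F(W))$; this is cosmetic. The genuine gap is the flat-web identification. \fullref{lem:Risomod} is only an isomorphism after tensoring with $\bZ/2\bZ$; to deduce that the $\bZ$-graded ranks of $\Hom_{\fR^\Lambda}(F_{can}(W),F(W))$ agree with those for the even level-2 cyclotomic KLR algebra, you must know these Hom spaces are graded free of finite rank over $\bZ$, i.e.\ you need a basis theorem for the super cyclotomic quotient, which the paper never proves: \fullref{prop:basis}, despite its label, only constructs an action, and $F_{can}$ involves divided powers, so it does not even live in the thin subalgebra $\overline{\fR}^\Lambda(\nu)$ to which \fullref{lem:Risomod} applies. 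Moreover your fallback principle --- that the super structure ``only inserts signs'' and so cannot change $q$-graded dimensions --- is false as a general principle: by \eqref{eq:dotnil} dots in $\fR(\nu)$ square to zero, whereas even KLR dots are free polynomial generators, so before taking cyclotomic quotients the two theories have completely different Hom spaces; the coincidence of graded ranks is special to level-2 cyclotomic quotients and is precisely the statement that requires proof. So your reduction of the theorem to a rank comparison is correct, but the comparison itself (freeness over $\bZ$ plus agreement of graded ranks, resolution by resolution) is a real theorem that neither you nor the paper supplies.
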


\begin{proof}[Proof of~\fullref{thm:invariance}] 
The following is immediate.
\begin{lem}
  For $\beta_{\pm}$ a positive/negative crossing let $W_t$ and $W_b$ be the following tangles in $F$-form:
\[ W_t = \
\tikz[very thick,baseline={([yshift=-.8ex]current bounding box.center)}]{
\draw (-1,-.5) -- (1,-.5);\draw (-1, .5) -- (1, .5);
\draw (-1,-.5) -- (-1,.5);\draw (1,-.5) -- (1,.5);
\node at (0,0) {$\beta_{\pm}$};
\draw (-.75,-1)node[below]{\tiny $1$} -- (-.75,-.5);
\draw (0,-1)node[below]{\tiny $1$} -- (0,-.5);
\draw [dotted] (.75,-1)node[below]{\tiny $0$}  -- (.75,-.5);
\draw [dotted] (-.75,.5) -- (-.75,2)node[above]{\tiny $0$};
\draw (0,1) -- (0,.5);\draw (0,1) -- (0,1.5);\draw[rdirected=.55] (.75,1.5) -- (0,1.5);\draw (.75,1.5) -- (.75,2)node[above]{\tiny $1$};
\draw (.75,1) -- (.75,.5);\draw[rdirected=.55] (1.5,1) -- (.75,1);\draw (1.5,1) -- (1.5,2)node[above]{\tiny $1$};
\draw [dotted] (1.5,-1)node[below]{\tiny $0$} -- (1.5,1);
\draw [dotted] (.75,1) -- (.75,1.5);\draw [dotted] (0,1.5) -- (0,2)node[above]{\tiny $0$};
} 
\mspace{30mu}\text{and}\mspace{30mu}
W_b = \
\tikz[very thick,baseline={([yshift=-.8ex]current bounding box.center)}]{
\draw (1,.5) -- (-1,.5);\draw (1, -.5) -- (-1,-.5);
\draw (1,.5) -- (1,-.5);\draw (-1,.5) -- (-1,-.5);
\node at (0,0) {$\beta_{\pm}$};
\draw (.75,1)node[above]{\tiny $1$} -- (.75,.5);
\draw (0,1)node[above]{\tiny $1$} -- (0,.5);
\draw [dotted] (-.75,1)node[above]{\tiny $0$}  -- (-.75,.5);
\draw [dotted] (.75,-.5) -- (.75,-2)node[below]{\tiny $0$};
\draw (0,-1) -- (0,-.5);\draw (0,-1) -- (0,-1.5);\draw[rdirected=.55] (0,-1.5) -- (-.75,-1.5);\draw (-.75,-1.5) -- (-.75,-2)node[below]{\tiny $1$};
\draw (-.75,-1) -- (-.75,-.5);\draw[rdirected=.55] (-.75,-1) -- (-1.5,-1);\draw (-1.5,-1) -- (-1.5,-2)node[below]{\tiny $1$};
\draw [dotted] (-1.5,1)node[above]{\tiny $0$} -- (-1.5,-1);
\draw [dotted] (-.75,-1) -- (-.75,-1.5);\draw [dotted] (0,-1.5) -- (0,-2)node[below]{\tiny $0$};
} 
\]
Then the complexes $\fF(W_t)$ and $\fF(W_b)$  are isomorphic. 
\end{lem}

%
%
\begin{lem}[Reidemeister I]\label{lem:reidIa}
Consider diagrams $D_1^+$ and $D_0$ that differ as below. 
\[
D_1^+ = 
\tikz[very thick,scale=.25,baseline={([yshift=.8ex]current bounding box.center)}]{
\draw (-2,-5)node[below] {\tiny $1$} to (-2,2);\draw [dotted](-2,2) to (-2,5); 
\draw [doubleblack]( 2,-5)node[below] {\tiny $2$} to (2,-2);\draw (2,-2) to (2,1);\draw[->] (2,3) to (2,5);   
\draw [dotted] (6,-5)node[below] {\tiny $0$} to (6,-2);\draw (6,-2) to (6,2); \draw[doubleblack] (6,2) to (6,5);  
	\draw [directed=.6] (2,-2) to (6,-2);
	\draw [directed=.6] (-2,2) to (2,2);\draw [directed=.55] (2,2) to (6,2);
}\mspace{80mu}
D_0 = 
\tikz[very thick,scale=.25,baseline={([yshift=.8ex]current bounding box.center)}]{
\draw (-2,-5)node[below] {\tiny $1$} to (-2,2);\draw [dotted](-2,2) to (-2,5); 
\draw [doubleblack]( 2,-5)node[below] {\tiny $2$} to (2,-2);\draw[dotted] (2,-2) to (2,2);\draw[->] (2,2) to (2,5);   
\draw [dotted] (6,-5)node[below] {\tiny $0$} to (6,-2);\draw[doubleblack] (6,-2) to (6,2); \draw[doubleblack] (6,2) to (6,5);  
	\draw [doubleblack] (2,-2) to (6,-2);
	\draw [directed=.6] (-2,2) to (2,2);
}
\]
Then $\fF(D_1^+)$ and $\fF(D_0)$ are isomorphic in $\Kom_{/h}\big( (1,2,0),(0,1,2) \big)$. 
\end{lem}
\begin{proof}
  We have 
\[
\begin{tikzpicture}
  \node at (-3.9, 0) {$\overline{\fF}(D_1^+) = q^{-1}F_1F_2F_2(1,2,0)$};
  \node at ( 3, 0) {$F_1F_1F_2(1,2,0).$};
  \draw[semithick,directed=1] (-1.2,0) -- (1.5,0);
\node at (.1,.65) {
\tikz[very thick,baseline={([yshift=.9ex]current bounding box.center)}]{
       \draw[mygreen] (0,-.5) .. controls (0,0) and (1,0) .. (1,.5);
       \draw[myblue] (1,-.5) .. controls (1,0) and (0,0) .. (0,.5);
       \draw[myblue] (1.75,-.5) -- (1.75,.5);
       \node at (-.15,-.425) {\tiny $1$}; \node at (1.15,-.425) {\tiny $2$}; \node at (1.9,-.425) {\tiny $2$};}
};  
\end{tikzpicture}  
\]

The first term is isomorphic to $F_1F^{(2)}_2(1,2,0)\oplus q^{-2}F_1F^{(2)}_2(1,2,0)$ via the map 
\[
\begin{tikzpicture}
  \node at (-5.5, 0) {$F_1F^{(2)}_2(1,2,0)\oplus q^{-2}F_1F^{(2)}_2(1,2,0)$};
  \node at ( 4, 0) {$q^{-1}F_1F_2^2(1,2,0) ,$};
  \draw[semithick,directed=1] (-2,0) -- (2.3,0);
  \node at (0.25,-.25) {\tiny $\simeq$};
\node at (.1,.9) {
\Bigg(
\tikz[very thick,scale=1.2,baseline={([yshift=.8ex]current bounding box.center)}]{
  \draw[mygreen] (-.75,-.5 )node[below] {\textcolor{black}{\tiny $1$}} -- (-.75,.5);
  \draw[doubleblue] (0,-.5)node[below] {\tiny $2$} -- (0,0);
  \draw[myblue] (-.015,0) .. controls (0,.12) and (-.25,.12) .. (-.3,.5);
  \draw[myblue] (.015,0) .. controls (0,.12) and (.25,.12) ..  (.3,.5);
}\  , \  
\tikz[very thick,scale=1.2,baseline={([yshift=.8ex]current bounding box.center)}]{
  \draw[mygreen] (-.75,-.5 )node[below] {\textcolor{black}{\tiny $1$}} -- (-.75,.5);
  \draw[doubleblue] (0,-.5)node[below] {\tiny $2$} -- (0,0);
  \draw[myblue] (-.015,0) .. controls (0,.12) and (-.25,.12) .. (-.3,.5);
  \draw[myblue] (.015,0) .. controls (0,.12) and (.25,.12) ..  (.3,.5);
\node[fill=myblue,circle,inner sep=2pt] at (-.225,.25) {}; 
}\Bigg)
};  
\end{tikzpicture}  
\]
while for the second term there is an isomorphism
\[
\begin{tikzpicture}
  \node at (-5.5, 0) {$F_2F_1F_2(1,2,0)$};
  \node at (0.65, 0) {$F_1F^{(2)}_2(1,2,0),$};
  \draw[semithick,directed=1] (-3.75,0) -- (-1.1,0);
\node at (-2.5,-.25) {\tiny $\simeq$};
\node at (-2.5,1) {
\tikz[very thick,scale=1.2,baseline={([yshift=.8ex]current bounding box.center)}]{
 \draw[mygreen] (-.225,-.5) node[below] {\textcolor{black}{\tiny $1$}}  .. controls (-.25,-.1) and (-.75,-.1) ..  (-.75,.5);
  \draw[doubleblue] (0,.5) -- (0,0);
  \draw[myblue] (-.015,0) .. controls (0,-.12) and (-.25,.06) .. (-.75,-.5)node[below] {\textcolor{black}{\tiny $2$}};
  \draw[myblue] (.015,0) .. controls (0,-.12) and (.25,-.12) ..  (.3,-.5)node[below] {\textcolor{black}{\tiny $2$}}; }
};  
\end{tikzpicture} 
\]
so that $\overline{\fF}(D_1^+)$ is isomorphic to the complex 
\[
\begin{tikzpicture}
  \node at (-7, 0) {$\begin{pmatrix}
  F_1F_2^{(2)}(1,2,0)
  \\[1.5ex]
  q^{-2}F_1F_2^{(2)}(1,2,0) 
\end{pmatrix}$};
  \node at (1.65, 0) {$F_1F_2^{(2)}(1,2,0) .$};
  \draw[semithick,directed=1] (-4.75,0) -- (-0.1,0);
\node at (-2.5,0.75) {
$\bigg(
  t_{2,1}\tikz[very thick,baseline={([yshift=-.2ex]current bounding box.center)}]{
       \draw[mygreen] (0,-.5) -- (0,.5);
       \draw[doubleblue] (.5,-.5) -- (.5,.5);
       \node at (-.15,-.425) {\tiny $1$}; \node at (.675,-.425) {\tiny $2$};   
  } ,\ \ 
  t_{1,2}\tikz[very thick,baseline={([yshift=-.2ex]current bounding box.center)}]{
       \draw[mygreen] (0,-.5)node[midway,fill=mygreen,circle,inner sep=2pt]{} -- (0,.5);
       \draw[doubleblue] (.5,-.5) -- (.5,.5);
       \node at (-.15,-.425) {\tiny $1$}; \node at (.675,-.425) {\tiny $2$};   
  }\Big)$
};  
\end{tikzpicture}
\]

By Gaussian elimination one gets that the complex $\overline{\fF}(D_1^+)$ is homotopy equivalent to the one term complex
$q^{-2}F_1F_2^{(2)}(1,2,0)$ concentrated in homological degree zero, which after normalization 
is $\fF(D_0)$.  
\end{proof}
The other types of Reidemeister I move can be verified similarly. For example, replacing the positive crossing by
a negative crossing in~\fullref{lem:reidIa} and using the inverses of the various isomorphisms above 
results in a complex isomorphic to $\overline{\fF}(D_1^-)$ that is homotopy equivalent to the 1-term complex
$q^2F_1F_2^{(2)}(1,2,0)$ concentrated in homological degree zero.

\begin{lem}[Reidemeister IIa]
Consider diagrams $D_1$ and $D_0$ that differ as below. 
\[
D_1 = 
\tikz[very thick,scale=.25,baseline={([yshift=.8ex]current bounding box.center)}]{
\draw (-2,-5)node[below] {\tiny $1$} to (-2,-2);\draw [dotted](-2,-2) to (-2,5); 
\draw ( 2,-5)node[below] {\tiny $1$} to (2,-3);\draw (2,-1) to (2,2);\draw [dotted] (2,2) to (2,5);   
\draw [dotted] (6,-5)node[below] {\tiny $0$} to (6,-2);\draw[->] (6,-2) to (6,5);  
\draw [dotted] (10,-5)node[below] {\tiny $0$} to (10,2);\draw[->] (10,2) to (10,5);        
        \draw [directed=.6] (-2,-2) to (2,-2);\draw [directed=.6] (2,-2) to (6,-2);
	\draw [directed=.6] (2,2) to (5,2);\draw [directed=.55] (7,2) to (10,2);
}\mspace{80mu}
D_0 = 
\tikz[very thick,scale=.25,baseline={([yshift=.8ex]current bounding box.center)}]{
\draw (-2,-5)node[below] {\tiny $1$} to (-2,0);\draw [dotted](-2,0) to (-2,5); 
\draw ( 2,-5)node[below] {\tiny $1$} to (2,-2.5);\draw [dotted]( 2,-5) to (2,2.5);
      \draw (2,0) to (2,2.5);\draw [dotted] (2,2.5) to (2,5);   
\draw [dotted] (6,-5)node[below] {\tiny $0$} to (6,-2.5);\draw (6,-2.5) to (6,0);
      \draw[dotted] (6,0) to (6,2.5);\draw[->] (6,2.5) to (6,5);  
\draw [dotted] (10,-5)node[below] {\tiny $0$} to (10,0);\draw[->] (10,0) to (10,5);
\draw [directed=.6](2,-2.5) to (6,-2.5);\draw [directed=.6] (2,2.5) to (6,2.5);
\draw [directed=.6] (-2,0) to (2,0);\draw [directed=.6] (6,0) to (10,0);
}
\]
Then $\fF(D_1)$ and $\fF(D_0)$ are isomorphic in $\Kom_{/h}\big( (1,1,0,0),(0,0,1,1) \big)$.
  \end{lem}
\begin{proof}
In the following we write $\mu$ instead of $(1,1,0,0)$.  
The complex $\overline{\fF}(D_1)$ is 
\[
\begin{tikzpicture}
  \node at (-6, 0) {$q^{-1}F_3F_2F_1F_2\mu$};
  \node at ( 0, 1.5) {$F_3F_2F_2F_1\mu$};
  \node at ( 0,-1.5) {$F_2F_3F_1F_2\mu$}; 
  \node at ( 6, 0) {$qF_2F_3F_2F_1\mu,$};
  \node at (0,0) {$\bigoplus$};
  \draw[semithick,directed=1] (-6, 0.5) -- (-1.5, 1.5);
  \draw[semithick,directed=1] (-6,-0.5) -- (-1.5,-1.5);
  \draw[semithick,directed=1] (1.5, 1.5) -- (6,  0.5);
  \draw[semithick,directed=1] (1.5,-1.5) -- (6, -0.5);
\node at (-4.5,1.75) {
   \tikz[very thick,scale=0.9,baseline={([yshift=.9ex]current bounding box.center)}]{
     \draw[mygreen] (-1.5,-.5) -- (-1.5,.5);\node at (-1.65,-.425) {\tiny $3$};
     \draw[myblue] (-.75,-.5) -- (-.75,.5);\node at (-.90,-.425) {\tiny $2$};
     \draw[myred] (0,-.5) .. controls (0,0) and (1,0) .. (1,.5); \node at (-.15,-.425) {\tiny $1$};
     \draw[myblue] (1,-.5) .. controls (1,0) and (0,0) .. (0,.5);  \node at (1.15,-.425) {\tiny $2$};}  };  
\node at (4.2,1.75) {$-$
   \tikz[very thick,scale=0.9,baseline={([yshift=-.2ex]current bounding box.center)}]{
     \draw[mygreen] (0,-.5) .. controls (0,0) and (1,0) .. (1,.5); \node at (-.15,-.425) {\tiny $3$};
     \draw[myblue] (1,-.5) .. controls (1,0) and (0,0) .. (0,.5);  \node at (1.15,-.425) {\tiny $2$};  
     \draw[myblue] (1.75,-.5) -- (1.75,.5);\node at (1.90,-.425) {\tiny $2$};
     \draw[myred] (2.5,-.5) -- (2.5,.5);\node at (2.65,-.425) {\tiny $1$};}  };
\node at (-4.5,-1.75) {
   \tikz[very thick,scale=0.9,baseline={([yshift=-.2ex]current bounding box.center)}]{
     \draw[mygreen] (0,-.5) .. controls (0,0) and (1,0) .. (1,.5); \node at (-.15,-.425) {\tiny $3$};
     \draw[myblue] (1,-.5) .. controls (1,0) and (0,0) .. (0,.5);  \node at (1.15,-.425) {\tiny $2$};  
     \draw[myred] (1.75,-.5) -- (1.75,.5);\node at (1.90,-.425) {\tiny $1$};
     \draw[myblue] (2.5,-.5) -- (2.5,.5);\node at (2.65,-.425) {\tiny $2$};}  };
\node at (4.5,-1.75) {
   \tikz[very thick,scale=0.9,baseline={([yshift=.9ex]current bounding box.center)}]{
     \draw[myblue] (-1.5,-.5) -- (-1.5,.5);\node at (-1.65,-.425) {\tiny $2$};
     \draw[mygreen] (-.75,-.5) -- (-.75,.5);\node at (-.90,-.425) {\tiny $3$};
     \draw[myred] (0,-.5) .. controls (0,0) and (1,0) .. (1,.5); \node at (-.15,-.425) {\tiny $1$};
     \draw[myblue] (1,-.5) .. controls (1,0) and (0,0) .. (0,.5);  \node at (1.15,-.425) {\tiny $2$};}  };  
\end{tikzpicture}
\]
From the isomorphisms
\[
\begin{tikzpicture}
  \node at (-5, 0) {$F_3F_2F_1F_2\mu$};
  \node at ( 0, 0) {$F_3F_2^{(2)}F_1\mu$};
  \node at ( 5, 0) {$F_3F_2F_1F_2\mu,$};  
  \draw[semithick,directed=1] (-3.75,0) -- (-1.1,0);
  \draw[semithick,directed=1] (1.1,0)  --  (3.75,0);
\node at (-2.5,-.25) {\tiny $\simeq$};\node at (2.5,-.25) {\tiny $\simeq$};
\node at (-2.5,1) {
\tikz[very thick,scale=1.2,baseline={([yshift=.8ex]current bounding box.center)}]{
 \draw[myred] (.225,-.5) node[below] {\textcolor{black}{\tiny $1$}}  .. controls (.25,-.1) and (.75,-.1) ..  (.75,.5);
  \draw[doubleblue] (0,.5) -- (0,0);
  \draw[myblue] (.015,0) .. controls (0,-.12) and (.25,.06) .. (.75,-.5)node[below] {\textcolor{black}{\tiny $2$}};
  \draw[myblue] (-.015,0) .. controls (0,-.12) and (-.25,-.12) ..  (-.3,-.5)node[below] {\textcolor{black}{\tiny $2$}};
  \draw[mygreen] (-.75,-.5) node[below] {\textcolor{black}{\tiny $3$}} -- (-.75,.5);}
};  
\node at (2.4,1) {
\tikz[very thick,scale=1.2,baseline={([yshift=.8ex]current bounding box.center)}]{
 \draw[myred] (.225,.5)  .. controls (.25,.1) and (.75,.1) ..  (.75,-.5)node[below] {\textcolor{black}{\tiny $1$}};
  \draw[doubleblue] (0,-.5)node[below] {\textcolor{black}{\tiny $2$}} -- (0,0);
  \draw[myblue] (.015,0) .. controls (0,.12) and (.25,-.06) .. (.75,.5);
  \draw[myblue] (-.015,0) .. controls (0,.12) and (-.25,.12) ..  (-.3,.5);
  \draw[mygreen] (-.75,.5) -- (-.75,-.5) node[below] {\textcolor{black}{\tiny $3$}};}
};
\end{tikzpicture}
\]
\[
\begin{tikzpicture}
  \node at (-5, 0) {$F_2F_3F_2F_1\mu$};
  \node at ( 0, 0) {$F_3F_2^{(2)}F_1\mu$};
  \node at ( 5, 0) {$F_2F_3F_2F_1\mu,$};  
  \draw[semithick,directed=1] (-3.75,0) -- (-1.1,0);
  \draw[semithick,directed=1] (1.1,0)  --  (3.75,0);
\node at (-2.5,-.25) {\tiny $\simeq$};\node at (2.5,-.25) {\tiny $\simeq$};
\node at (-2.5,1) {
  \tikz[very thick,scale=1.2,baseline={([yshift=.8ex]current bounding box.center)}]{
 \draw[mygreen] (-.225,-.5) node[below] {\textcolor{black}{\tiny $3$}}  .. controls (-.25,-.1) and (-.75,-.1) ..  (-.75,.5);
  \draw[doubleblue] (0,.5) -- (0,0);
  \draw[myblue] (-.015,0) .. controls (0,-.12) and (-.25,.06) .. (-.75,-.5)node[below] {\textcolor{black}{\tiny $2$}};
  \draw[myblue] (.015,0) .. controls (0,-.12) and (.25,-.12) ..  (.3,-.5)node[below] {\textcolor{black}{\tiny $2$}};
  \draw[myred] (.75,-.5) node[below] {\textcolor{black}{\tiny $1$}} -- (.75,.5);}
};  
\node at (2.4,1) {
\tikz[very thick,scale=1.2,baseline={([yshift=.8ex]current bounding box.center)}]{
 \draw[mygreen] (-.225,.5)  .. controls (-.25,.1) and (-.75,.1) ..  (-.75,-.5)node[below] {\textcolor{black}{\tiny $3$}};
  \draw[doubleblue] (0,-.5)node[below] {\textcolor{black}{\tiny $2$}} -- (0,0);
  \draw[myblue] (-.015,0) .. controls (0,.12) and (-.25,-.06) .. (-.75,.5);
  \draw[myblue] (.015,0) .. controls (0,.12) and (.25,.12) ..  (.3,.5);
  \draw[myred] (.75,.5) -- (.75,-.5) node[below] {\textcolor{black}{\tiny $1$}};}  
};
\end{tikzpicture}
\]
and
\[
\begin{tikzpicture}
  \node at (-5, 0) {$F_3F_2F_2F_1\mu$};
  \node at ( 0, 1.5) {$qF_3F_2^{(2)}F_1\mu$};
  \node at ( 0,-1.5) {$q^{-1}F_3F_2^{(2)}F_1\mu$};
  \node at ( 5, 0) {$F_3F_2F_2F_1\mu,$};  
  \node at ( 0, 0) {$\bigoplus$};
  \draw[semithick,directed=1] (-4,.4) -- (-1.35,1.3);
  \draw[semithick,directed=1] (1.35,1.3)  --  (4,.4);
  \draw[semithick,directed=1] (-4,-.4) -- (-1.35,-1.4);
  \draw[semithick,directed=1] (1.35,-1.4)  --  (4,-.4);
\node at (-3.3,1.5) {
\tikz[very thick,scale=.8,baseline={([yshift=.8ex]current bounding box.center)}]{
  \draw[mygreen] (-.75,-.5 )node[below] {\textcolor{black}{\tiny $1$}} -- (-.75,.5);
  \draw[doubleblue] (0,.5) -- (0,0);
  \draw[myblue] (-.015,0) .. controls (0,-.12) and (-.25,-.12) .. (-.3,-.5)node[below] {\textcolor{black}{\tiny $2$}};
  \draw[myblue] (.015,0) .. controls (0,-.12) and (.25,-.12) ..  (.3,-.5)node[below] {\textcolor{black}{\tiny $2$}};
  \node[fill=myblue,circle,inner sep=1.5pt] at (-.225,-.25) {};
  \draw[myred] (.75,-.5 )node[below] {\textcolor{black}{\tiny $3$}} -- (.75,.5);}
};  
\node at (-3.3,-1.7) {
\tikz[very thick,scale=.8,baseline={([yshift=.8ex]current bounding box.center)}]{
  \draw[mygreen] (-.75,-.5 )node[below] {\textcolor{black}{\tiny $1$}} -- (-.75,.5);
  \draw[doubleblue] (0,.5) -- (0,0);
  \draw[myblue] (-.015,0) .. controls (0,-.12) and (-.25,-.12) .. (-.3,-.5)node[below] {\textcolor{black}{\tiny $2$}};
  \draw[myblue] (.015,0) .. controls (0,-.12) and (.25,-.12) ..  (.3,-.5)node[below] {\textcolor{black}{\tiny $2$}};
  \draw[myred] (.75,-.5 )node[below] {\textcolor{black}{\tiny $3$}} -- (.75,.5);}
};
\node at (3.3,1.5) {
\tikz[very thick,scale=.8,baseline={([yshift=.8ex]current bounding box.center)}]{
  \draw[mygreen] (-.75,.5 ) -- (-.75,-.5)node[below] {\textcolor{black}{\tiny $1$}};
  \draw[doubleblue] (0,-.5)node[below] {\tiny $2$} -- (0,0);
  \draw[myblue] (-.015,0) .. controls (0,.12) and (-.25,.12) .. (-.3,.5);
  \draw[myblue] (.015,0) .. controls (0,.12) and (.25,.12) ..  (.3,.5);
  \draw[myred] (.75,.5 ) -- (.75,-.5)node[below] {\textcolor{black}{\tiny $3$}};}
};
\node at (3.3,-1.7) {
\tikz[very thick,scale=.8,baseline={([yshift=.8ex]current bounding box.center)}]{
  \draw[mygreen] (-.75,.5 ) -- (-.75,-.5)node[below] {\textcolor{black}{\tiny $1$}};
  \draw[doubleblue] (0,-.5)node[below] {\tiny $2$} -- (0,0);
  \draw[myblue] (-.015,0) .. controls (0,.12) and (-.25,.12) .. (-.3,.5);
  \draw[myblue] (.015,0) .. controls (0,.12) and (.25,.12) ..  (.3,.5);
  \draw[myred] (.75,.5 ) -- (.75,-.5)node[below] {\textcolor{black}{\tiny $3$}};
  \node[fill=myblue,circle,inner sep=1.5pt] at (-.225,.25) {};}
};  
\end{tikzpicture}
\]
and simplifying the maps using the relations in $\fR(\nu)$ one gets that $\fF(D_1)$ is isomorphic to the complex
\[
\begin{tikzpicture}
  \node at (-6.5, 0) {$q^{-1}F_3F_2^{(2)}F_1\mu$};
  \node at ( 0, 2) {$qF_3F_2^{(2)}F_1\mu$};
   \node at (0,1) {$\bigoplus$};  
  \node at ( 0, 0) {$q^{-1} F_3F_2^{(2)}F_1\mu$};
   \node at (0,-1) {$\bigoplus$};  
  \node at ( 0, -2) {$F_3F_2^{(2)}F_1\mu$};
  \node at ( 6.5, 0) {$qF_3F_2^{(2)}F_1\mu,$};
  \draw[semithick,directed=1] (-5, .5) -- (-1.5, 1.9);
  \draw[semithick,directed=1] (-5, 0) -- (-1.5,  0);
  \draw[semithick,directed=1] (-5,-.5) -- (-1.5,-1.9);
  \draw[semithick,directed=1] (1.5,1.9) -- (5, .5);
  \draw[semithick,directed=1] (1.5, 0) -- (5,  0);
  \draw[semithick,directed=1] (1.5,-1.9) -- (5,-.5);
\node at (-4.25,1.75) {\tiny{$t_{12}\!\!\!$}
  \tikz[very thick,scale=1,baseline={([yshift=.8ex]current bounding box.center)}]{
  \draw[myred] (.4,-.4) node[below] {\textcolor{black}{\tiny $1$}}  --  (.4,.4)node[midway,fill=myred,circle,inner sep=2pt]{};
  \draw[mygreen] (-.4,-.4)node[below] {\textcolor{black}{\tiny $3$}}  --  (-.4,.4);
  \draw[doubleblue] (0,-.4)node[below] {\tiny $2$}  -- (0,.4);}
};
\node at (-4.25,-2) {
\tikz[very thick,scale=1,baseline={([yshift=.8ex]current bounding box.center)}]{
  \draw[myred] (.225,.5) .. controls (.2,.1) and (.25,.1) ..  (.7,-.5)node[below] {\textcolor{black}{\tiny $1$}};
  \draw[mygreen] (-.225,.5) .. controls (-.6,.2) and (-.75,.1) ..  (-.7,-.5)node[below] {\textcolor{black}{\tiny $3$}};
  \draw[doubleblue] (0,-.5)node[below] {\tiny $2$} -- (0,0);
  \draw[myblue] (-.015,0) .. controls (0,.1) and (-.25,-.05) .. (-.7,.5);
  \draw[myblue] ( .015,0) .. controls (0,.1) and ( .25,-.05) ..  (.7,.5);}
};
\node at (-3.2,.3) {\tiny{$t_{21}\id$}
};
\node at (2,.6) {\tiny{$-t_{32}$}
  \tikz[very thick,scale=1,baseline={([yshift=.8ex]current bounding box.center)}]{
  \draw[myred] (.4,-.4) node[below] {\textcolor{black}{\tiny $1$}}  --  (.4,.4);
  \draw[mygreen] (-.4,-.4)node[below] {\textcolor{black}{\tiny $3$}}  --  (-.4,.4)node[midway,fill=mygreen,circle,inner sep=2pt]{};
  \draw[doubleblue] (0,-.4)node[below] {\tiny $2$}  -- (0,.4);
  }};
\node at (3.3,1.5) {\tiny{$-t_{23}\id$}
};
\node at (4,-2.05) {
\tikz[very thick,scale=1,baseline={([yshift=.8ex]current bounding box.center)}]{
  \draw[myred] (.225,-.5)node[below] {\textcolor{black}{\tiny $1$}} .. controls (.6,-.1) and (.75,-.1) ..  (.7,.5);
  \draw[mygreen] (-.225,-.5)node[below] {\textcolor{black}{\tiny $3$}} .. controls (-.2,-.2) and (-.25,-.1) ..  (-.7,.5);
  \draw[doubleblue] (0,.5) -- (0,0);
  \draw[myblue] (-.015,0) .. controls (0,-.1) and (-.25,.05) .. (-.7,-.5)node[below] {\textcolor{black}{\tiny $2$}};
  \draw[myblue] ( .015,0) .. controls (0,-.1) and ( .25,.05) ..  (.7,-.5)node[below] {\textcolor{black}{\tiny $2$}};}
};
\end{tikzpicture}
\]
By Gaussian elimination of the acyclic two-term complexes $q^{-1}F_3F_2^{(2)}F_1\mu\xra{t_{21}\id}q^{-1}F_3F_2^{(2)}F_1\mu$
and $qF_3F_2^{(2)}F_1\mu\xra{-t_{23}\id}qF_3F_2^{(2)}F_1\mu$
one obtains that $\overline{\fF}(D_1)$ is homotopy equivalent to the complex
\[
\begin{tikzpicture}
  \node at (-2.5,0) {$0$};
  \node at ( 0,0) {$F_3F_2^{(2)}F_1\mu$};
  \node at ( 2.5,-.02) {$0 ,$};
  \draw[semithick,directed=1] (-2.2, 0) -- (-1.25, 0);
  \draw[semithick,directed=1] (1.25, 0) -- (2.2,  0);
\end{tikzpicture}  
\]
with the middle-term in homological degree zero.
\end{proof}

\begin{lem}
[Reidemeister III]
Consider diagrams $D_L$ and $D_R$ that differ as below.
\[
D_L = 
\tikz[very thick,scale=.225,baseline={([yshift=.8ex]current bounding box.center)}]{
\draw (-2,-7.5)node[below] {\tiny $1$} to (-2,-2.5);\draw [dotted](-2,-2.5) to (-2,7.5); 
\draw ( 2,-7.5)node[below] {\tiny $1$} to (2,-3.5);\draw (2,-1.5) to (2,2.5);\draw [dotted] (2,2.5) to (2,7.5);   
\draw (6,-7.5)node[below] {\tiny $1$} to (6,-5);\draw[dotted] (6,-5) to (6,-2.5);\draw (6,-2.5) to (6,0);
   \draw[dotted] (6,0) to (6,2.5);\draw (6,2.5) to (6,5);\draw[dotted] (6,5) to (6,7.5);
\draw [dotted] (10,-7.5)node[below] {\tiny $0$} to (10,-5);\draw (10,-5) to (10,-1);\draw[->] (10,1) to (10,7.5);        
\draw [dotted] (14,-7.5)node[below] {\tiny $0$} to (14,0);\draw (14,0) to (14,2.5);\draw [dotted] (14,2.5) to (14,5);
   \draw [->] (14,5) to (14,7.5);
\draw [dotted] (18,-7.5)node[below] {\tiny $0$} to (18,2.5);\draw [->] (18,2.5) to (18,7.5);
\draw [directed=.6] (-2,-2.5) to (2,-2.5);\draw [directed=.6] (2,-2.5) to (6,-2.5);
\draw [directed=.6] (6,0) to (10,0);\draw [directed=.6] (10,0) to (14,0);
\draw [directed=.6] (6,5) to (9,5);\draw [directed=.55] (11,5) to (14,5);
\draw [directed=.55] (6,-5) to (10,-5);\draw [directed=.55] (2,2.5) to (6,2.5);\draw [directed=.55] (14,2.5) to (18,2.5);
}
\mspace{80mu}
D_R = 
\tikz[very thick,scale=.225,baseline={([yshift=.8ex]current bounding box.center)}]{
\draw (-2,-7.5)node[below] {\tiny $1$} to (-2,-2.5);\draw [dotted](-2,-2.5) to (-2,7.5); 
\draw ( 2,-7.5)node[below] {\tiny $1$} to (2,-5);\draw[dotted] (2,-5) to (2,-2.5);\draw (2,-2.5) to (2,0);
\draw[dotted] (2,0) to (2,7.5);   
\draw (6,-7.5)node[below] {\tiny $1$} to (6,-1);\draw (6,1) to (6,2.5);\draw (6,3.5) to (6,5);
  \draw (6,2.5) to (6,5);\draw[dotted] (6,5) to (6,7.5);
\draw [dotted] (10,-7.5)node[below] {\tiny $0$} to (10,-5);\draw (10,-5) to (10,-2.5);\draw[dotted] (10,-2.5) to (10,0);
  \draw (10,0) to (10,2.5);\draw[dotted] (10,2.5) to (10,5);\draw[->] (10,5) to (10,7.5);   
\draw [dotted] (14,-7.5)node[below] {\tiny $0$} to (14,-2.5);\draw (14,-2.5) to (14,1.5);\draw [->] (14,3.5) to (14,7.5);
\draw [dotted] (18,-7.5)node[below] {\tiny $0$} to (18,2.5);\draw [->] (18,2.5) to (18,7.5);
\draw [directed=.6] (2,0) to (6,0);\draw [directed=.6] (6,0) to (10,0);
\draw [directed=.6] (10,2.5) to (14,2.5);\draw [directed=.6] (14,2.5) to (18,2.5);
\draw [directed=.6] (2,-5) to (5,-5);\draw [directed=.55] (7,-5) to (10,-5);
\draw [directed=.55] (6,5) to (10,5);\draw [directed=.55] (10,-2.5) to (14,-2.5);\draw [directed=.55] (-2,-2.5) to (2,-2.5);
}
\]
Then $\fF(D_L)$ and $\fF(D_R)$ are isomorphic in $\Kom_{/h}\big( (1,1,1,0,0,0),(0,0,0,1,1,1) \big)$.
\end{lem}
\begin{proof}
  The proof is inspired by~\cite[Lemma 7.9]{putyra} (see also~\cite[\S4.3.3]{putyraThesis} for further details). The complex associated to $D_L$ is the mapping cone of the map
\[
q^{-1}\fF\left(
\tikz[very thick,scale=.225,baseline={([yshift=0ex]current bounding box.center)}]{
\draw (-2,-7.5)node[below] {\tiny $1$} to (-2,-2.5);\draw [dotted](-2,-2.5) to (-2,7.5); 
\draw ( 2,-7.5)node[below] {\tiny $1$} to (2,-3.5);\draw (2,-1.5) to (2,3);\draw [dotted] (2,3) to (2,7.5);   
\draw (6,-7.5)node[below] {\tiny $1$} to (6,-5);\draw[dotted] (6,-5) to (6,-2.5);\draw (6,-2.5) to (6,1);
   \draw[dotted] (6,1) to (6,3);\draw (6,3) to (6,5);\draw[dotted] (6,5) to (6,7.5);
   \draw [dotted] (10,-7.5)node[below] {\tiny $0$} to (10,-5);\draw (10,-5) to (10,-1);\draw[dotted] (10,-1) to (10,1);
   \draw[->] (10,1) to (10,7.5);        
\draw [dotted] (14,-7.5)node[below] {\tiny $0$} to (14,-1);\draw (14,-1) to (14,2.5);\draw [dotted] (14,2.5) to (14,5);
   \draw [->] (14,5) to (14,7.5);
\draw [dotted] (18,-7.5)node[below] {\tiny $0$} to (18,2.5);\draw [->] (18,2.5) to (18,7.5);
\draw [directed=.6] (-2,-2.5) to (2,-2.5);\draw [directed=.6] (2,-2.5) to (6,-2.5);
\draw [directed=.6] (6,1) to (10,1);\draw [directed=.6] (10,-1) to (14,-1);
\draw [directed=.6] (6,5) to (9,5);\draw [directed=.55] (11,5) to (14,5);
\draw [directed=.55] (6,-5) to (10,-5);\draw [directed=.55] (2,3) to (6,3);\draw [directed=.55] (14,2.5) to (18,2.5);
}\right)
\xra{\dotsm\tikz[very thick,scale=.8,baseline={([yshift=0ex]current bounding box.center)}]{
       \draw[myblue] (0,-.5) .. controls (0,0) and (1,0) .. (1,.5);
       \draw[mygreen] (1,-.5) .. controls (1,0) and (0,0) .. (0,.5);
       \node at (-.15,-.425) {\tiny $3$}; \node at (1.15,-.425) {\tiny $4$}; 
  }\dotsm}
\fF\left(
\tikz[very thick,scale=.225,baseline={([yshift=.8ex]current bounding box.center)}]{
\draw (-2,-7.5)node[below] {\tiny $1$} to (-2,-2.5);\draw [dotted](-2,-2.5) to (-2,7.5); 
\draw ( 2,-7.5)node[below] {\tiny $1$} to (2,-3.5);\draw (2,-1.5) to (2,2.5);\draw [dotted] (2,2.5) to (2,7.5);   
\draw (6,-7.5)node[below] {\tiny $1$} to (6,-5);\draw[dotted] (6,-5) to (6,-2.5);\draw (6,-2.5) to (6,-1);
   \draw[dotted] (6,0) to (6,2.5);\draw (6,2.5) to (6,5);\draw[dotted] (6,5) to (6,7.5);
   \draw [dotted] (10,-7.5)node[below] {\tiny $0$} to (10,-5);\draw (10,-5) to (10,-1);\draw[doubleblack] (10,-1) to (10,1);
   \draw[->] (10,1) to (10,7.5);        
\draw [dotted] (14,-7.5)node[below] {\tiny $0$} to (14,1);\draw (14,1) to (14,3);\draw [dotted] (14,3) to (14,5);
   \draw [->] (14,5) to (14,7.5);
\draw [dotted] (18,-7.5)node[below] {\tiny $0$} to (18,3);\draw [->] (18,3) to (18,7.5);
\draw [directed=.6] (-2,-2.5) to (2,-2.5);\draw [directed=.6] (2,-2.5) to (6,-2.5);
\draw [directed=.6] (6,-1) to (10,-1);\draw [directed=.6] (10,1) to (14,1);
\draw [directed=.6] (6,5) to (9,5);\draw [directed=.55] (11,5) to (14,5);
\draw [directed=.55] (6,-5) to (10,-5);\draw [directed=.55] (2,2.5) to (6,2.5);\draw [directed=.55] (14,3) to (18,3);
}
\right)
\]
An easy exercise shows that the second complex is isomorphic to the complex
\[
\fF\left(\tikz[very thick,scale=.225,baseline={([yshift=0ex]current bounding box.center)}]{
\draw (-2,-7.5)node[below] {\tiny $1$} to (-2,-3);\draw [dotted](-2,-3) to (-2,7.5); 
\draw ( 2,-7.5)node[below] {\tiny $1$} to (2,-5);\draw[dotted] (2,-5) to (2,-3);\draw (2,-3) to (2,-1);
\draw[dotted] (2,-1) to (2,7.5);   
\draw (6,-7.5)node[below] {\tiny $1$} to (6,-1);\draw[doubleblack] (6,-1) to (6,1);\draw (6,1) to (6,2.5);\draw (6,3.5) to (6,5);
  \draw (6,2.5) to (6,5);\draw[dotted] (6,5) to (6,7.5);
\draw [dotted] (10,-7.5)node[below] {\tiny $0$} to (10,-5);\draw (10,-5) to (10,-2.5);\draw[dotted] (10,-2.5) to (10,1);
  \draw (10,1) to (10,3);\draw[dotted] (10,3) to (10,5);\draw[->] (10,5) to (10,7.5);   
\draw [dotted] (14,-7.5)node[below] {\tiny $0$} to (14,-2.5);\draw (14,-2.5) to (14,2);\draw [->] (14,4) to (14,7.5);
\draw [dotted] (18,-7.5)node[below] {\tiny $0$} to (18,3);\draw [->] (18,3) to (18,7.5);
\draw [directed=.6] (2,-1) to (6,-1);\draw [directed=.6] (6,1) to (10,1);
\draw [directed=.6] (10,3) to (14,3);\draw [directed=.6] (14,3) to (18,3);
\draw [directed=.6] (2,-5) to (5,-5);\draw [directed=.55] (7,-5) to (10,-5);
\draw [directed=.55] (6,5) to (10,5);\draw [directed=.55] (10,-2.5) to (14,-2.5);\draw [directed=.55] (-2,-3) to (2,-3);
}\right)
\]
In~\cite[\S4.3.3]{putyraThesis} it is explained in detail how to use an isomorphism like this
together with the maps associated to two Reidemeister 2
moves on the first complex to prove that $\fF(D_L)$ is homotopy equivalent to $\fF(D_R)$. 
\end{proof}

This finishes the proof of~\fullref{thm:invariance}. 
\end{proof}

\subsection{Not even Khovanov homology}

We now show that for links the invariant $\cH(L)$ is distinct from even Khovanov homology and shares common properties with odd Khovanov homology.

\subsubsection{Reduced homology}\label{sec:red}

\begin{thm}\label{thm:reduced}
For every link $L$ there is an invariant $H_{\text{reduced}}(L)$ with the property 
\[
H(L) \simeq q H_{\text{reduced}}(L) \oplus q^{-1}H_{\text{reduced}}(L) .
\]
\end{thm}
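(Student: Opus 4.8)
The plan is to produce $H_{\text{reduced}}(L)$ by marking a basepoint on $L$ and splitting off the associated dot action, exactly as one separates a free module factor. First I would fix a basepoint $p$ on $L$ lying on a simple ($1$-labelled) strand of the web $W_L$, chosen generically so that $p$ does not sit at a crossing, a splitter, or a merger. Placing a dot at $p$ then defines an endomorphism $X$ of the complex $\fF(W_L)$ of $q$-degree $2$ and odd parity, and by the dot-nilpotency relation \eqref{eq:dotnil} we have $X^2=0$. Thus $X$ generates a copy of the (chronological) Frobenius algebra $A:=\Bbbk[X]/(X^2)$ acting on $\fF(W_L)$.

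Next I would check that $X$ is a morphism of complexes. The differential of $\fF(W_L)$ is assembled from the crossing maps $\tau$ of \eqref{eq:pXing} and \eqref{eq:nXing} together with the splitter and merger maps, all of which are supported away from $p$. Using the dot-slide relations \eqref{eq:dotslides}, \eqref{eq:dotjumpneib}, \eqref{eq:dotjumps} and \eqref{eq:dotslide-nilH}, the dot at $p$ can be transported past each elementary piece of the differential at the cost of the chronological signs recorded in \eqref{eq:chronology}; tracking these signs shows that $X$ supercommutes with $d$ and hence descends to an action of $A$ on $H(L)$.

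The heart of the argument is to show that $\fF(W_L)$ is free over $A$. Here I would invoke the exterior-algebra action of \fullref{prop:basis}: each chain group is a grading shift of an exterior algebra in the dot variables, and the variable $x_p$ attached to the marked strand contributes a free tensor factor $\bV(x_p)\cong\Bbbk[x_p]/(x_p^2)$ on which $X$ acts by multiplication. Because $p$ is placed away from every crossing and splitter, the differential never involves $x_p$ and is therefore $\bV(x_p)$-linear, so the complex factors as $\fF(W_L)\cong A\otimes_\Bbbk\overline{C}$ for the reduced complex $\overline{C}:=\fF(W_L)/X\fF(W_L)$. Setting $H_{\text{reduced}}(L):=H(\overline{C})$ and using the graded identification $A\cong q\,\Bbbk\oplus q^{-1}\Bbbk$ (after the normalising shift) then yields $H(L)\simeq qH_{\text{reduced}}(L)\oplus q^{-1}H_{\text{reduced}}(L)$.

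Finally I would argue that $H_{\text{reduced}}(L)$ is itself an invariant: its Reidemeister invariance is inherited from \fullref{thm:invariance} once one checks that the marked strand can be carried along through each move, and independence of the location of $p$ within its component follows by sliding the dot, which is legitimate since $X$ is a chain map. The main obstacle I anticipate is the freeness step in the super and chronological setting: one must verify that the odd parity of the dot and the anticommutativity of the exterior variables do not obstruct the separation of the $x_p$-factor, i.e. that $X$ generates a genuinely free (rather than merely torsion) $A$-summand over $\bZ$. This integral splitting is precisely the feature that distinguishes the present theory from even Khovanov homology, where the analogous statement fails, so it is here that the anticommutative nature of $\fR(\nu)$ must enter in an essential way.
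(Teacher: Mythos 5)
Your strategy---find an action of $A=\Bbbk[X]/(X^2)$ on the complex and split off a free factor---is in the right spirit, but the step where the splitting is actually produced has a genuine gap. You claim that because the basepoint $p$ sits away from all crossings and splitters, ``the differential never involves $x_p$,'' and hence the complex factors as $A\otimes_\Bbbk\overline{C}$. This is false: whenever a resolution change splits the circle carrying $p$, the corresponding component of the differential is a comultiplication-type map whose matrix entries multiply by the dot variables on that circle. This is visible in the paper's own computations (the maps $\delta$ and $\delta'$ accompanying~\eqref{eq:frob} and~\eqref{eq:frobb}), where $\delta$ applied to the undotted generator is $-t_{12}\,a_1+t_{21}\,a_2$, i.e.\ the differential creates dots on the strands through any chosen basepoint. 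Note also that your argument, as written, makes no essential use of the odd structure and would apply verbatim to even Khovanov homology over $\bZ$, where the conclusion $H(L)\simeq qH_{\text{reduced}}(L)\oplus q^{-1}H_{\text{reduced}}(L)$ is known to fail; so the inference cannot be valid. (And mere $A$-linearity of $d$ together with freeness of the chain groups over $A$ does not give a tensor factorization of complexes: the two-term complex $A\xra{X}A$ is a counterexample.)

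What is needed---and what the paper does---is a second operator. The paper never chooses a basepoint; instead it works on the factor $\Hom_{\cR^\Lambda}(F_{can},F_{i_1i_1\dotsm i_ki_k})$ common to all chain groups, defining $X$ (remove the dot from the first splitter) and $\Delta$ (add a dot at every undotted splitter, extended by the Leibniz rule). It then proves that both commute with the differential, that $\Delta^2=0$, and the Clifford-type identity $X\Delta+\Delta X = \id_{\fF(D)}$. This identity makes $\Delta$ acyclic and splits the complex as $\ker(\Delta)\oplus q^2\ker(\Delta)$, giving the theorem with $\fF_{\text{reduced}}(D)=q\ker(\Delta)$. The anticommutativity that you correctly anticipate as essential enters exactly there: the Leibniz signs are what make $\Delta$ a chain map and make the cross terms in $X\Delta+\Delta X$ cancel, whereas in the even theory over $\bZ$ the analogous operator is not a chain map. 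If you wish to keep your basepoint formulation, the repair is the same: besides multiplication by $x_p$ you must introduce the odd contraction $\partial_{x_p}$, verify that it is a chain map (this is the essential use of the odd structure), and use $x_p\partial_{x_p}+\partial_{x_p}x_p=\id$ to split the complex; the freeness over $A$ is then a consequence of the splitting, not an input to it.
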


The proof of~\fullref{thm:reduced} follows a reasoning analogous to the proof of Theorem 3.2.A.
in~\cite{shumakovitch-torsion}, for the analogous decomposition for Khovanov homology over $\bZ/2\bZ$ in terms of 
reduced Khovanov homology.   

Before proving the theorem we do some preparation.
Recall that for $D$ a diagram of $L$ the chain groups of $\fF(D)$ are the various $\Bbbk$-supervector spaces
$\Hom_{\cR^\Lambda}(F_{can},F(W))$, where $W$ runs over all the resolutions of $D$. 

If we write $F_{can}=F_{i_1^{(2)}i_2^{(2)}\dotsm i_{k}^{(2)}}$ then $\Hom_{\cR^\Lambda}(F_{can}, F_{i_1i_1i_2i_2\dotsm i_ki_k})$ is
spanned by
\[
\Bigg\{
\tikz[very thick,scale=.9,baseline={([yshift=0.5ex]current bounding box.center)}]{
  \draw[doubleblue] (0,-.75)node[below] {\tiny $i_1$} -- (0,-.5);
  \draw[myblue] (-.015,-.5) .. controls (0,-.48) and (-.25,-.48) .. (-.3,.5);
  \draw[myblue] (.015,-.5) .. controls (0,.-.48) and (.25,-.48) ..  (.3,.5);
  \node[fill=myblue,circle,inner sep=1.5pt] at (-.16,-.33) {};
\node at (-.36,-.48) {\tiny $\delta_1$};
  \draw[doublegreen] (1,-.75)node[below] {\tiny $i_2$} -- (1,-.15);
  \draw[mygreen] (.985,-.15) .. controls (.85,0) and (.75,.12) .. (.7,.5);
  \draw[mygreen] (1.015,-.15) .. controls (1.15,0) and (1.25,.12) ..  (1.3,.5);
  \node[fill=mygreen,circle,inner sep=1.5pt] at (0.87,0) {};
\node at (.67,-.1) {\tiny $\delta_2$};
  \node at (2,-.25) {$\dotsm$};
  \draw[doublered] (3,-.75)node[below] {\tiny $i_k$} -- (3,0);
  \draw[myred] (2.985,0) .. controls (3,.12) and (2.75,.12) .. (2.7,.5);
  \draw[myred] (3.015,0) .. controls (3,.12) and (3.25,.12) ..  (3.3,.5);
  \node[fill=myred,circle,inner sep=1.5pt] at (2.775,.25) {};
\node at (2.67,.1) {\tiny $\delta_k$};
}\mspace{10mu},\mspace{5mu}
\delta_1,\dotsc,\delta_k\in\{0,1\}
\Bigg\} .
\]
Introduce linear maps $X$ and $\Delta$ on $\Hom_{\cR^\Lambda}(F_{can}, F_{i_1i_1i_2i_2\dotsm i_ki_k})$
as follows.
Map $\Delta$ is defined on the factors as 
\[
\Delta
\Bigg( \dotsm
\tikz[very thick,scale=.9,baseline={([yshift=-.5ex]current bounding box.center)}]{
\draw[doublegreen] (1,-.75) -- (1,-.15);
  \draw[mygreen] (.985,-.15) .. controls (.85,0) and (.75,.12) .. (.7,.5);
  \draw[mygreen] (1.015,-.15) .. controls (1.15,0) and (1.25,.12) ..  (1.3,.5);
  \node[fill=mygreen,circle,inner sep=1.5pt] at (0.87,0) {};
}\dotsm\Bigg) =  0 , 
\mspace{60mu}
\Delta\Bigg( \dotsm
\tikz[very thick,scale=.9,baseline={([yshift=-.5ex]current bounding box.center)}]{
\draw[doublegreen] (1,-.75) -- (1,-.15);
  \draw[mygreen] (.985,-.15) .. controls (.85,0) and (.75,.12) .. (.7,.5);
  \draw[mygreen] (1.015,-.15) .. controls (1.15,0) and (1.25,.12) ..  (1.3,.5);
}\dotsm\Bigg) =
\dotsm
\tikz[very thick,scale=.9,baseline={([yshift=-.5ex]current bounding box.center)}]{
\draw[doublegreen] (1,-.75) -- (1,-.15);
  \draw[mygreen] (.985,-.15) .. controls (.85,0) and (.75,.12) .. (.7,.5);
  \draw[mygreen] (1.015,-.15) .. controls (1.15,0) and (1.25,.12) ..  (1.3,.5);
  \node[fill=mygreen,circle,inner sep=1.5pt] at (0.87,0) {}; }
\dotsm , 
\]
and extended to  $\Hom_{\cR^\Lambda}(F_{can}, F_{i_1i_1i_2i_2\dotsm i_ki_k})$ using the Leibniz rule.
The map $X$ is defined by
\[
X\Bigg(
\tikz[very thick,scale=.9,baseline={([yshift=0.5ex]current bounding box.center)}]{
  \draw[doubleblue] (0,-.75)node[below] {\tiny $i_1$} -- (0,-.5);
  \draw[myblue] (-.015,-.5) .. controls (0,-.48) and (-.25,-.48) .. (-.3,.5);
  \draw[myblue] (.015,-.5) .. controls (0,.-.48) and (.25,-.48) ..  (.3,.5);
  \node[fill=myblue,circle,inner sep=1.5pt] at (-.16,-.33) {};
\node at (-.36,-.48) {\tiny $\delta_1$};
  \draw[doublegreen] (1,-.75)node[below] {\tiny $i_2$} -- (1,-.15);
  \draw[mygreen] (.985,-.15) .. controls (.85,0) and (.75,.12) .. (.7,.5);
  \draw[mygreen] (1.015,-.15) .. controls (1.15,0) and (1.25,.12) ..  (1.3,.5);
  \node[fill=mygreen,circle,inner sep=1.5pt] at (0.87,0) {};
\node at (.67,-.1) {\tiny $\delta_2$};
  \node at (2,-.25) {$\dotsm$};
  \draw[doublered] (3,-.75)node[below] {\tiny $i_k$} -- (3,0);
  \draw[myred] (2.985,0) .. controls (3,.12) and (2.75,.12) .. (2.7,.5);
  \draw[myred] (3.015,0) .. controls (3,.12) and (3.25,.12) ..  (3.3,.5);
  \node[fill=myred,circle,inner sep=1.5pt] at (2.775,.25) {};
\node at (2.67,.1) {\tiny $\delta_k$};
}
\Bigg) =
\begin{cases}\ 
\tikz[very thick,scale=.9,baseline={([yshift=0.5ex]current bounding box.center)}]{
  \draw[doubleblue] (0,-.75)node[below] {\tiny $i_1$} -- (0,-.5);
  \draw[myblue] (-.015,-.5) .. controls (0,-.48) and (-.25,-.48) .. (-.3,.5);
  \draw[myblue] (.015,-.5) .. controls (0,.-.48) and (.25,-.48) ..  (.3,.5);
  \draw[doublegreen] (1,-.75)node[below] {\tiny $i_2$} -- (1,-.15);
  \draw[mygreen] (.985,-.15) .. controls (.85,0) and (.75,.12) .. (.7,.5);
  \draw[mygreen] (1.015,-.15) .. controls (1.15,0) and (1.25,.12) ..  (1.3,.5);
  \node[fill=mygreen,circle,inner sep=1.5pt] at (0.87,0) {};
\node at (.67,-.1) {\tiny $\delta_2$};
  \node at (2,-.25) {$\dotsm$};
  \draw[doublered] (3,-.75)node[below] {\tiny $i_k$} -- (3,0);
  \draw[myred] (2.985,0) .. controls (3,.12) and (2.75,.12) .. (2.7,.5);
  \draw[myred] (3.015,0) .. controls (3,.12) and (3.25,.12) ..  (3.3,.5);
  \node[fill=myred,circle,inner sep=1.5pt] at (2.775,.25) {};
\node at (2.67,.1) {\tiny $\delta_k$};
} & \text{if }\ \delta_1 = 1 ,  \\[1.5ex]
\mspace{60mu}0 & \text{otherwise}.
\end{cases}
\]

Since 
\[
\Hom_{\cR^\Lambda}(F_{can},F(W)) \simeq \Hom_{\cR^\Lambda}(F_{can}, F_{i_1i_1i_2i_2\dotsm i_ki_k})
\times
\Hom_{\cR^\Lambda}(F_{i_1i_1i_2i_2\dotsm i_ki_k},F(W))
\]
the maps $\Delta$ and $X$ induce maps on $\Hom_{\cR^\Lambda}(F_{can},F(W))$, denoted by the same symbols. 

\begin{lem}
  Both maps $X$ and $\Delta$ commute with the differential of $\fF(D)$, $\Delta^2=0$, and moreover
  \(   X\Delta+\Delta X = \id_{\fF(D)}  \). 
\end{lem}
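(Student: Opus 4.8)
The plan is to first recognise the algebraic shape of the factor $\Hom_{\cR^\Lambda}(F_{can},F_{i_1i_1i_2i_2\dotsm i_ki_k})$ on which $X$ and $\Delta$ are defined. Its spanning set is indexed by $(\delta_1,\dotsc,\delta_k)\in\{0,1\}^k$, so I would identify it with the tensor product $\bigotimes_{s=1}^{k}\langle 1,\theta_s\rangle$ of $k$ two-dimensional super vector spaces, where $\theta_s$ is the odd element ``dot on the $s$-th split strand'', the relation $\theta_s^2=0$ being exactly~\eqref{eq:dotnil}. Under this identification $\Delta$ is the Koszul (Leibniz) sum $\sum_s\delta_s$ of the single-slot creation maps $\delta_s\colon 1\mapsto\theta_s,\ \theta_s\mapsto 0$, while $X$ is the single-slot annihilation $\iota_1$ at slot $1$, sending $\theta_1\mapsto 1$ and $1\mapsto 0$ (no Koszul prefactor, since slot $1$ is leftmost). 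Both are odd operators that change the number of dots by one.

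With this dictionary, $\Delta^2=0$ and $X\Delta+\Delta X=\id$ become the standard (anti)commutation relations of creation/annihilation operators, which I would verify directly from the Koszul signs. In each slot one has $\delta_s^{2}=0$ (a second dot is killed by~\eqref{eq:dotnil}), and the operators attached to distinct slots graded-anticommute once the grading signs are inserted; summing gives $\Delta^2=\sum_s\delta_s^{2}+\sum_{s<t}\{\delta_s,\delta_t\}=0$. For the anticommutator I would split $\Delta=\delta_1+\Delta'$ with $\Delta'=\sum_{s\ge 2}\delta_s$; the canonical relation $\iota_1\delta_1+\delta_1\iota_1=\id$ in slot $1$ together with $\iota_1\delta_s+\delta_s\iota_1=0$ for $s\ge 2$ yields $X\Delta+\Delta X=\id$. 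Concretely this is a two-line check on the cases $\delta_1=0$ and $\delta_1=1$, keeping track of the sign $(-1)^{\vert g\vert}$ produced whenever $\Delta$ must step past the $\theta_1$ already present.

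For the commutation with the differential, the key observation is that $X$ and $\Delta$ are supported \emph{near the source} $F_{can}$: they only modify the dots on the thin strands immediately above the bottom splitters, acting on the left-hand factor of the product decomposition $\Hom_{\cR^\Lambda}(F_{can},F(W))\simeq\Hom_{\cR^\Lambda}(F_{can},F_{i_1i_1\dotsm i_ki_k})\times\Hom_{\cR^\Lambda}(F_{i_1i_1\dotsm i_ki_k},F(W))$. The differential of $\fF(D)$, by contrast, is postcomposition with the crossing morphisms of~\eqref{eq:pXing} and~\eqref{eq:nXing}, which are supported near the target $F(W)$ and thus live in the right-hand factor. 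I would therefore argue that $d$ and $X,\Delta$ act on opposite ends of the stacked diagrams and commute by associativity of composition together with the interchange law~\eqref{eq:chronology}; the only possible discrepancy is the Koszul sign $(-1)^{p(f)p(g)}$ of~\eqref{eq:chronology}, which I would check is absorbed consistently, so that $d\Delta=\Delta d$ and $dX=Xd$ as maps of complexes.

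The hardest part is exactly this locality-plus-sign step. One must make sure that re-expressing $\tau\circ(\text{dotted splitters})$ in the standard spanning set of the target chain group does not secretly slide a crossing downwards across the dotted splitters, i.e.\ that the crossing differentials genuinely act through the right-hand factor alone; this is where the defining relations and the explicit action of~\fullref{prop:basis} must be invoked, and where the bookkeeping of the odd signs carried by the dots and by the (possibly odd) crossing differentials is most delicate. Once the differential is shown to preserve the decomposition and to commute with the bottom dots in the graded sense, the three assertions follow.
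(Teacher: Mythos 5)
Your proposal is correct and is exactly the routine verification the paper intends --- the paper's entire proof reads ``Straightforward.'' The slot-wise creation/annihilation computation on $\Hom_{\cR^\Lambda}(F_{can},F_{i_1i_1\dotsm i_ki_k})$ gives $\Delta^2=0$ and $X\Delta+\Delta X=\id_{\fF(D)}$, and commutation with the differential holds because $d$ is post-composition with the crossing $2$-morphisms, hence acts through the other factor of the decomposition of $\Hom_{\cR^\Lambda}(F_{can},F(W))$, just as you argue.
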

\begin{proof}
Straightforward. 
\end{proof}

\begin{proof}[Proof of~\fullref{thm:reduced}]
We have that $\Delta$ is acyclic and therefore
\[
\fF(D) \simeq  \ker(\Delta) \oplus q^2\ker(\Delta) , 
\]
and so the claim follows by setting $\fF_{\text{reduced}}(D)= q\ker(\Delta)$. 
\end{proof}

\subsubsection{A chronological Frobenius algebra}

We now examine the behavior of the functor $\fF$ under merge and splitting of circles.
First define maps $\imath$ and $\varepsilon$,  
\begin{equation*} 
\fF\left(
  \tikz[very thick,scale=.2,baseline={([yshift=0ex]current bounding box.center)}]{
\draw [doubleblack] (-2,0)node[below] {\tiny $2$} to (-2,2);\draw (-2,2) to (-2,5);\draw[dotted] (-2,5) to (-2,7); 
\draw[dotted] (2,0)node[below] {\tiny $0$} to (2,2);
    \draw (2,2) to (2,5); \draw[doubleblack] (2,5) to (2,7);\draw (-2,2) to (2,2);\draw (-2,5) to (2,5);
}\right)\mspace{10mu}
\tikz[semithick,scale=.2,baseline={([yshift=-.8ex]current bounding box.center)}]{
\draw[->] (0,1) to (5,1);\draw[->] (5,-1) to (0,-1);
\node at (2.5,2.1) {\tiny $\varepsilon$};\node at (2.5,-2.1) {\tiny $\imath$};
}\mspace{10mu}
\fF\left(
\tikz[very thick,scale=.2,baseline={([yshift=0ex]current bounding box.center)}]{
\draw [doubleblack] (-2,0)node[below] {\tiny $2$} to (-2,3.5);\draw[dotted] (-2,3.5) to (-2,7); 
\draw[dotted] (2,0)node[below] {\tiny $0$} to (2,3.5);\draw[doubleblack] (2,3.5) to (2,7);
\draw[doubleblack] (-2,3.5) to (2,3.5);
}\right)
\end{equation*}
as 
\[
\imath\colon F_1^{(2)}(2,0)
\xra{\ \ 
  \tikz[very thick,scale=.85,baseline={([yshift=0ex]current bounding box.center)}]{
  \draw[doublered] (1,-.5) -- (1,0);
  \draw[myred] (.985,0) .. controls (1,.12) and (.75,.12) .. (.7,.5);
  \draw[myred] (1.015,0) .. controls (1,.12) and (1.25,.12) ..  (1.3,.5);
  \node at (1.25,-.4) {\tiny $1$}; }\ \
}F_1^2(2,0)
\mspace{80mu}
\varepsilon\colon F_1^2(2,0)
\xra{\ \
\tikz[very thick,scale=.8,baseline={([yshift=.8ex]current bounding box.center)}]{
  \draw[doublered] (0,.5) -- (0,0);
  \draw[myred] (-.015,0) .. controls (0,-.12) and (-.25,-.12) .. (-.3,-.5);
  \draw[myred] (.015,0) .. controls (0,-.12) and (.25,-.12) ..  (.3,-.5);
  \node at (0.25,.44) {\tiny $1$};}
  \ \ }
F_1^{(2)}(2,0) . 
\]
Note that, contrary to~\cite{ORS}, $p(\imath)=1$ and $p(\varepsilon)=0$. 

\smallskip

We now consider the following two cases~\eqref{eq:frob} and~\eqref{eq:frobb}  below.
\begin{equation}\tag{$a$}\label{eq:frob}
\fF\left(
  \tikz[very thick,scale=.2,baseline={([yshift=0ex]current bounding box.center)}]{
\draw [doubleblack] (-2,-4.5)node[below] {\tiny $2$} to (-2,2);\draw (-2,2) to (-2,5);\draw[dotted] (-2,5) to (-2,6.5); 
\draw [doubleblack] (2,-4.5)node[below] {\tiny $2$} to (2,-3);\draw (2,-3) to (2,0);\draw[dotted] (2,0) to (2,2);
  \draw (2,2) to (2,5); \draw[doubleblack] (2,5) to (2,6.5);   
\draw [dotted] (6,-4.5)node[below] {\tiny $0$} to (6,-3);\draw (6,-3) to (6,0);\draw[doubleblack] (6,0) to (6,6.5);  
	\draw (2,-3) to (6,-3);\draw (2,0) to (6,0);
        \draw (-2,2) to (2,2);\draw (-2,5) to (2,5);
}\right)\mspace{10mu}
\tikz[semithick,scale=.2,baseline={([yshift=-.8ex]current bounding box.center)}]{
\draw[ ->] (0,1) to (5,1);\draw[->] (5,-1) to (0,-1);
\node at (2.5,2.1) {\tiny $\mu$};\node at (2.5,-2.1) {\tiny $\delta$};
}\mspace{10mu}
\fF\left(
\tikz[very thick,scale=.2,baseline={([yshift=0ex]current bounding box.center)}]{
\draw [doubleblack] (-2,-4.5)node[below] {\tiny $2$} to (-2,0);\draw (-2,0) to (-2,5);\draw[dotted] (-2,5) to (-2,6.5); 
\draw [doubleblack] (2,-4.5)node[below] {\tiny $2$} to (2,-3);\draw (2,-3) to (2,0);\draw[doubleblack] (2,0) to (2,2);
  \draw (2,2) to (2,5); \draw[doubleblack] (2,5) to (2,6.5);   
\draw [dotted] (6,-4.5)node[below] {\tiny $0$} to (6,-3);\draw (6,-3) to (6,2);\draw[doubleblack] (6,2) to (6,6.5);  
	\draw (2,-3) to (6,-3);\draw (2,2) to (6,2);
        \draw (-2,0) to (2,0);\draw (-2,5) to (2,5);
}\right)
\end{equation}

The maps $\mu$ and $\delta$ are given by 
\[
\mu\colon F_1^2F_2^2(2,2,0)
\xra{
   \tikz[very thick,scale=0.9,baseline={([yshift=.9ex]current bounding box.center)}]{
     \draw[myblue] (1.75,-.5) -- (1.75,.5);\node at (1.9,-.425) {\tiny $2$};
     \draw[myred] (-.75,-.5) -- (-.75,.5);\node at (-.90,-.425) {\tiny $1$};
     \draw[myred] (0,-.5) .. controls (0,0) and (1,0) .. (1,.5); \node at (-.15,-.425) {\tiny $1$};
     \draw[myblue] (1,-.5) .. controls (1,0) and (0,0) .. (0,.5);  \node at (1.15,-.425) {\tiny $2$};}
}
F_1F_2F_1F_2(2,2,0) , 
\]
and
\[
\delta\colon F_1F_2F_1F_2(2,2,0)
\xra{
   \tikz[very thick,scale=0.9,baseline={([yshift=.9ex]current bounding box.center)}]{
     \draw[myblue] (1.75,-.5) -- (1.75,.5);\node at (1.9,-.425) {\tiny $2$};
     \draw[myred] (-.75,-.5) -- (-.75,.5);\node at (-.90,-.425) {\tiny $1$};
     \draw[myblue] (0,-.5) .. controls (0,0) and (1,0) .. (1,.5); \node at (-.15,-.425) {\tiny $2$};
     \draw[myred] (1,-.5) .. controls (1,0) and (0,0) .. (0,.5);  \node at (1.15,-.425) {\tiny $1$};}
}
F_1^2F_2^2(2,2,0) . 
\]
We have $p(\mu)=0$ and $p(\delta)=1$. Decomposing $F_1^2F_2^2(2,2,0)$ and $F_1F_2F_1F_2(2,2,0)$ into a direct sum
of several copies of $F_1^{(2)}F_2^{(2)}(2,2,0)$ with the appropriate grading shifts we fix bases 
\[
\Biggl\langle\
\tikz[very thick,scale=.85,baseline={([yshift=1.1ex]current bounding box.center)}]{
  \draw[doublered] (0,-.75)node[below] {\tiny $1$} -- (0,-.5);
  \draw[myred] (-.015,-.5) .. controls (0,-.48) and (-.25,-.48) .. (-.3,.5);
  \draw[myred] (.015,-.5) .. controls (0,.-.48) and (.25,-.48) ..  (.3,.5);
  \draw[doubleblue] (1,-.75)node[below] {\tiny $2$} -- (1,0);
  \draw[myblue] (.985,0) .. controls (1,.12) and (.75,.12) .. (.7,.5);
  \draw[myblue] (1.015,0) .. controls (1,.12) and (1.25,.12) ..  (1.3,.5);
  \node at (.5,-1.6) {\small $p=0$}; 
}\quad,\quad
\tikz[very thick,scale=.85,baseline={([yshift=1.1ex]current bounding box.center)}]{
  \draw[doublered] (0,-.75)node[below] {\tiny $1$} -- (0,-.5);
  \draw[myred] (-.015,-.5) .. controls (0,-.48) and (-.25,-.48) .. (-.3,.5);
  \draw[myred] (.015,-.5) .. controls (0,.-.48) and (.25,-.48) ..  (.3,.5);
  \node[fill=myred,circle,inner sep=1.5pt] at (-.18,-.25) {};
  \draw[doubleblue] (1,-.75)node[below] {\tiny $2$} -- (1,0);
  \draw[myblue] (.985,0) .. controls (1,.12) and (.75,.12) .. (.7,.5);
  \draw[myblue] (1.015,0) .. controls (1,.12) and (1.25,.12) ..  (1.3,.5);
  \node at (.5,-1.6) {\small $p=1$}; 
}\quad,\quad
\tikz[very thick,scale=.85,baseline={([yshift=1.1ex]current bounding box.center)}]{
  \draw[doublered] (0,-.75)node[below] {\tiny $1$} -- (0,-.5);
  \draw[myred] (-.015,-.5) .. controls (0,-.48) and (-.25,-.48) .. (-.3,.5);
  \draw[myred] (.015,-.5) .. controls (0,.-.48) and (.25,-.48) ..  (.3,.5);
  \draw[doubleblue] (1,-.75)node[below] {\tiny $2$} -- (1,0);
  \draw[myblue] (.985,0) .. controls (1,.12) and (.75,.12) .. (.7,.5);
  \draw[myblue] (1.015,0) .. controls (1,.12) and (1.25,.12) ..  (1.3,.5);
  \node[fill=myblue,circle,inner sep=1.5pt] at (0.775,.25) {};
  \node at (.5,-1.6) {\small $p=1$}; 
}\quad,\quad
\tikz[very thick,scale=.85,baseline={([yshift=1.1ex]current bounding box.center)}]{
  \draw[doublered] (0,-.75)node[below] {\tiny $1$} -- (0,-.5);
  \draw[myred] (-.015,-.5) .. controls (0,-.48) and (-.25,-.48) .. (-.3,.5);
  \draw[myred] (.015,-.5) .. controls (0,.-.48) and (.25,-.48) ..  (.3,.5);
  \node[fill=myred,circle,inner sep=1.5pt] at (-.18,-.25) {};
  \draw[doubleblue] (1,-.75)node[below] {\tiny $2$} -- (1,0);
  \draw[myblue] (.985,0) .. controls (1,.12) and (.75,.12) .. (.7,.5);
  \draw[myblue] (1.015,0) .. controls (1,.12) and (1.25,.12) ..  (1.3,.5);
  \node[fill=myblue,circle,inner sep=1.5pt] at (0.775,.25) {};
  \node at (.5,-1.6) {\small $p=0$}; 
}\ \Biggr\rangle   
\]
of $F_1^2F_2^2(2,2,0)$, and 
\[
\Biggl\langle\
\tikz[very thick,scale=.85,baseline={([yshift=1.1ex]current bounding box.center)}]{
  \draw[doublered] (0,-.75)node[below] {\tiny $1$} -- (0,-.5);
  \draw[myred] (-.015,-.5) .. controls (0,-.48) and (-.25,-.48) .. (-.3,.5);
  \draw[myred] (.015,-.5) .. controls (0,.-.48) and (.25,-.48) ..  (.7,.5);
  \draw[doubleblue] (1,-.75)node[below] {\tiny $2$} -- (1,0);
  \draw[myblue] (.985,0) .. controls (.8,.12) and (.5,.12) .. (.3,.5);
  \draw[myblue] (1.015,0) .. controls (1,.12) and (1.25,.12) ..  (1.3,.5);
  \node at (.5,-1.6) {\small $p=0$}; 
}
\quad,\quad
\tikz[very thick,scale=.85,baseline={([yshift=1.1ex]current bounding box.center)}]{
  \draw[doublered] (0,-.75)node[below] {\tiny $1$} -- (0,-.5);
  \draw[myred] (-.015,-.5) .. controls (0,-.48) and (-.25,-.48) .. (-.3,.5);
  \draw[myred] (.015,-.5) .. controls (0,.-.48) and (.25,-.48) ..  (.7,.5);
  \node[fill=myred,circle,inner sep=1.5pt] at (-.18,-.25) {};
  \draw[doubleblue] (1,-.75)node[below] {\tiny $2$} -- (1,0);
  \draw[myblue] (.985,0) .. controls (.8,.12) and (.5,.12) .. (.3,.5);
  \draw[myblue] (1.015,0) .. controls (1,.12) and (1.25,.12) ..  (1.3,.5);
  \node at (.5,-1.6) {\small $p=1$}; 
}\ \Biggr\rangle
\]
of $F_1F_2F_1F_2(2,2,0)$. 
Then we compute
\begingroup\allowdisplaybreaks
\begin{align*}
\delta \Biggl(\
\tikz[very thick,scale=.85,baseline={([yshift=0ex]current bounding box.center)}]{
  \draw[doublered] (0,-.75)node[below] {\tiny $1$} -- (0,-.5);
  \draw[myred] (-.015,-.5) .. controls (0,-.48) and (-.25,-.48) .. (-.3,.5);
  \draw[myred] (.015,-.5) .. controls (0,.-.48) and (.25,-.48) ..  (.7,.5);
  \draw[doubleblue] (1,-.75)node[below] {\tiny $2$} -- (1,0);
  \draw[myblue] (.985,0) .. controls (.8,.12) and (.5,.12) .. (.3,.5);
  \draw[myblue] (1.015,0) .. controls (1,.12) and (1.25,.12) ..  (1.3,.5);
}\Biggr) &=
- t_{12}\ 
\tikz[very thick,scale=.85,baseline={([yshift=0ex]current bounding box.center)}]{
  \draw[doublered] (0,-.75)node[below] {\tiny $1$} -- (0,-.5);
  \draw[myred] (-.015,-.5) .. controls (0,-.48) and (-.25,-.48) .. (-.3,.5);
  \draw[myred] (.015,-.5) .. controls (0,.-.48) and (.25,-.48) ..  (.3,.5);
  \node[fill=myred,circle,inner sep=1.5pt] at (-.18,-.25) {};
  \draw[doubleblue] (1,-.75)node[below] {\tiny $2$} -- (1,0);
  \draw[myblue] (.985,0) .. controls (1,.12) and (.75,.12) .. (.7,.5);
  \draw[myblue] (1.015,0) .. controls (1,.12) and (1.25,.12) ..  (1.3,.5);
} + t_{21}\ 
\tikz[very thick,scale=.85,baseline={([yshift=0ex]current bounding box.center)}]{
  \draw[doublered] (0,-.75)node[below] {\tiny $1$} -- (0,-.5);
  \draw[myred] (-.015,-.5) .. controls (0,-.48) and (-.25,-.48) .. (-.3,.5);
  \draw[myred] (.015,-.5) .. controls (0,.-.48) and (.25,-.48) ..  (.3,.5);
  \draw[doubleblue] (1,-.75)node[below] {\tiny $2$} -- (1,0);
  \draw[myblue] (.985,0) .. controls (1,.12) and (.75,.12) .. (.7,.5);
  \draw[myblue] (1.015,0) .. controls (1,.12) and (1.25,.12) ..  (1.3,.5);
  \node[fill=myblue,circle,inner sep=1.5pt] at (0.775,.25) {};
}
\\
\delta \Biggl(\
\tikz[very thick,scale=.85,baseline={([yshift=0ex]current bounding box.center)}]{
  \draw[doublered] (0,-.75)node[below] {\tiny $1$} -- (0,-.5);
  \draw[myred] (-.015,-.5) .. controls (0,-.48) and (-.25,-.48) .. (-.3,.5);
  \draw[myred] (.015,-.5) .. controls (0,.-.48) and (.25,-.48) ..  (.7,.5);
  \node[fill=myred,circle,inner sep=1.5pt] at (-.18,-.25) {};
  \draw[doubleblue] (1,-.75)node[below] {\tiny $2$} -- (1,0);
  \draw[myblue] (.985,0) .. controls (.8,.12) and (.5,.12) .. (.3,.5);
  \draw[myblue] (1.015,0) .. controls (1,.12) and (1.25,.12) ..  (1.3,.5);
}
\Biggr) &=
t_{21}\ 
\tikz[very thick,scale=.85,baseline={([yshift=0ex]current bounding box.center)}]{
  \draw[doublered] (0,-.75)node[below] {\tiny $1$} -- (0,-.5);
  \draw[myred] (-.015,-.5) .. controls (0,-.48) and (-.25,-.48) .. (-.3,.5);
  \draw[myred] (.015,-.5) .. controls (0,.-.48) and (.25,-.48) ..  (.3,.5);
  \node[fill=myred,circle,inner sep=1.5pt] at (-.18,-.25) {};
  \draw[doubleblue] (1,-.75)node[below] {\tiny $2$} -- (1,0);
  \draw[myblue] (.985,0) .. controls (1,.12) and (.75,.12) .. (.7,.5);
  \draw[myblue] (1.015,0) .. controls (1,.12) and (1.25,.12) ..  (1.3,.5);
  \node[fill=myblue,circle,inner sep=1.5pt] at (0.775,.25) {};
}
\end{align*}
\endgroup
and
\begingroup\allowdisplaybreaks
\begin{align*}
\mu\Biggl(\ 
\tikz[very thick,scale=.85,baseline={([yshift=0ex]current bounding box.center)}]{
  \draw[doublered] (0,-.75)node[below] {\tiny $1$} -- (0,-.5);
  \draw[myred] (-.015,-.5) .. controls (0,-.48) and (-.25,-.48) .. (-.3,.5);
  \draw[myred] (.015,-.5) .. controls (0,.-.48) and (.25,-.48) ..  (.3,.5);
  \draw[doubleblue] (1,-.75)node[below] {\tiny $2$} -- (1,0);
  \draw[myblue] (.985,0) .. controls (1,.12) and (.75,.12) .. (.7,.5);
  \draw[myblue] (1.015,0) .. controls (1,.12) and (1.25,.12) ..  (1.3,.5);
} \Biggr) =
\tikz[very thick,scale=.85,baseline={([yshift=0ex]current bounding box.center)}]{
  \draw[doublered] (0,-.75)node[below] {\tiny $1$} -- (0,-.5);
  \draw[myred] (-.015,-.5) .. controls (0,-.48) and (-.25,-.48) .. (-.3,.5);
  \draw[myred] (.015,-.5) .. controls (0,.-.48) and (.25,-.48) ..  (.7,.5);
  \draw[doubleblue] (1,-.75)node[below] {\tiny $2$} -- (1,0);
  \draw[myblue] (.985,0) .. controls (.8,.12) and (.5,.12) .. (.3,.5);
  \draw[myblue] (1.015,0) .. controls (1,.12) and (1.25,.12) ..  (1.3,.5);
}
&&\mu\Biggl(\
\tikz[very thick,scale=.85,baseline={([yshift=0ex]current bounding box.center)}]{
  \draw[doublered] (0,-.75)node[below] {\tiny $1$} -- (0,-.5);
  \draw[myred] (-.015,-.5) .. controls (0,-.48) and (-.25,-.48) .. (-.3,.5);
  \draw[myred] (.015,-.5) .. controls (0,.-.48) and (.25,-.48) ..  (.3,.5);
  \node[fill=myred,circle,inner sep=1.5pt] at (-.18,-.25) {};
  \draw[doubleblue] (1,-.75)node[below] {\tiny $2$} -- (1,0);
  \draw[myblue] (.985,0) .. controls (1,.12) and (.75,.12) .. (.7,.5);
  \draw[myblue] (1.015,0) .. controls (1,.12) and (1.25,.12) ..  (1.3,.5);
  \node[fill=myblue,circle,inner sep=1.5pt] at (0.775,.25) {};
}
\Biggr) &= 0
\\
\mu\biggl(\
\tikz[very thick,scale=.85,baseline={([yshift=0ex]current bounding box.center)}]{
  \draw[doublered] (0,-.75)node[below] {\tiny $1$} -- (0,-.5);
  \draw[myred] (-.015,-.5) .. controls (0,-.48) and (-.25,-.48) .. (-.3,.5);
  \draw[myred] (.015,-.5) .. controls (0,.-.48) and (.25,-.48) ..  (.3,.5);
  \node[fill=myred,circle,inner sep=1.5pt] at (-.18,-.25) {};
  \draw[doubleblue] (1,-.75)node[below] {\tiny $2$} -- (1,0);
  \draw[myblue] (.985,0) .. controls (1,.12) and (.75,.12) .. (.7,.5);
  \draw[myblue] (1.015,0) .. controls (1,.12) and (1.25,.12) ..  (1.3,.5);
}
\Biggr) =
\tikz[very thick,scale=.85,baseline={([yshift=0ex]current bounding box.center)}]{
  \draw[doublered] (0,-.75)node[below] {\tiny $1$} -- (0,-.5);
  \draw[myred] (-.015,-.5) .. controls (0,-.48) and (-.25,-.48) .. (-.3,.5);
  \draw[myred] (.015,-.5) .. controls (0,.-.48) and (.25,-.48) ..  (.7,.5);
  \node[fill=myred,circle,inner sep=1.5pt] at (-.18,-.25) {};
  \draw[doubleblue] (1,-.75)node[below] {\tiny $2$} -- (1,0);
  \draw[myblue] (.985,0) .. controls (.8,.12) and (.5,.12) .. (.3,.5);
  \draw[myblue] (1.015,0) .. controls (1,.12) and (1.25,.12) ..  (1.3,.5);
}
&&
\mu\Biggl(\
\tikz[very thick,scale=.85,baseline={([yshift=0ex]current bounding box.center)}]{
  \draw[doublered] (0,-.75)node[below] {\tiny $1$} -- (0,-.5);
  \draw[myred] (-.015,-.5) .. controls (0,-.48) and (-.25,-.48) .. (-.3,.5);
  \draw[myred] (.015,-.5) .. controls (0,.-.48) and (.25,-.48) ..  (.3,.5);
  \draw[doubleblue] (1,-.75)node[below] {\tiny $2$} -- (1,0);
  \draw[myblue] (.985,0) .. controls (1,.12) and (.75,.12) .. (.7,.5);
  \draw[myblue] (1.015,0) .. controls (1,.12) and (1.25,.12) ..  (1.3,.5);
  \node[fill=myblue,circle,inner sep=1.5pt] at (0.775,.25) {};
}
\Biggr) &=t_{12}t_{21}^{-1}\ 
\tikz[very thick,scale=.85,baseline={([yshift=0ex]current bounding box.center)}]{
  \draw[doublered] (0,-.75)node[below] {\tiny $1$} -- (0,-.5);
  \draw[myred] (-.015,-.5) .. controls (0,-.48) and (-.25,-.48) .. (-.3,.5);
  \draw[myred] (.015,-.5) .. controls (0,.-.48) and (.25,-.48) ..  (.7,.5);
  \node[fill=myred,circle,inner sep=1.5pt] at (-.18,-.25) {};
  \draw[doubleblue] (1,-.75)node[below] {\tiny $2$} -- (1,0);
  \draw[myblue] (.985,0) .. controls (.8,.12) and (.5,.12) .. (.3,.5);
  \draw[myblue] (1.015,0) .. controls (1,.12) and (1.25,.12) ..  (1.3,.5);
}
\end{align*}
\endgroup

Using this one sees that easily that $\mu\delta=0$, as in the case of the odd Khovanov homology of~\cite{ORS}.

Setting to 1 all $t_{ij}$ and renaming $\langle 1, a_1, a_2 ,a_1\wedge a_2 \rangle$ the basis
vectors  of $F_1^2F_2^2(2,0,0)$  and 
$\langle 1, a_1=a_2\rangle$ the basis vectors of $F_1F_2F_1F_2(2,0,0)$ one can give the maps
$\delta,\mu,\imath$ and $\varepsilon$
 a form that coincides with the corresponding maps in~\cite[\S1.1]{ORS}. 
 Note though, that while the parities of $\delta$ and $\mu$ coincide with the corresponding maps in~\cite{ORS},
 the parities of $\imath$ and $\varepsilon$ are reversed with respect to~\cite{ORS}.

\begin{equation}\tag{$b$}\label{eq:frobb}
\fF\left(
  \tikz[very thick,scale=.2,baseline={([yshift=0ex]current bounding box.center)}]{
\draw [doubleblack] (-2,-4.5)node[below] {\tiny $2$} to (-2,-3);\draw (-2,-3) to (-2,0);\draw[dotted] (-2,0) to (-2,6.5); 
\draw [dotted] (2,-4.5)node[below] {\tiny $0$} to (2,-3);\draw (2,-3) to (2,0);\draw[doubleblack] (2,0) to (2,2);
  \draw (2,2) to (2,5); \draw[dotted] (2,5) to (2,6.5);   
\draw [dotted] (6,-4.5)node[below] {\tiny $0$} to (6,2);\draw (6,2) to (6,5);\draw[doubleblack] (6,5) to (6,6.5);  
	\draw (-2,-3) to (2,-3);\draw (-2,0) to (2,0);
   \draw (2,2) to (6,2);\draw (2,5) to (6,5);
}\right)\mspace{10mu}
\tikz[semithick,scale=.2,baseline={([yshift=-.8ex]current bounding box.center)}]{
\draw[->] (0,1) to (5,1);\draw[->] (5,-1) to (0,-1);
\node at (2.5,2.1) {\tiny $\mu'$};\node at (2.5,-2.1) {\tiny $\delta'$};
}\mspace{10mu}
\fF\left(\tikz[very thick,scale=.2,baseline={([yshift=0ex]current bounding box.center)}]{
\draw [doubleblack] (-2,-4.5)node[below] {\tiny $2$} to (-2,-3);\draw (-2,-3) to (-2,2);\draw[dotted] (-2,2) to (-2,6.5); 
\draw [dotted] (2,-4.5)node[below] {\tiny $0$} to (2,-3);\draw (2,-3) to (2,0);\draw[dotted] (2,0) to (2,2);
  \draw (2,2) to (2,5); \draw[dotted] (2,5) to (2,6.5);   
\draw [dotted] (6,-4.5)node[below] {\tiny $0$} to (6,0);\draw (6,0) to (6,5);\draw[doubleblack] (6,5) to (6,6.5);  
   \draw (-2,-3) to (2,-3);\draw (-2,2) to (2,2);
   \draw (2,0) to (6,0);\draw (2,5) to (6,5);
}\right)
\end{equation}

The maps $\mu'$ and $\delta'$ are given by 
\[
\mu'\colon F_2^2F_1^2(2,0,0)
\xra{
   \tikz[very thick,scale=0.9,baseline={([yshift=.9ex]current bounding box.center)}]{
     \draw[myred] (1.75,-.5) -- (1.75,.5);\node at (1.9,-.425) {\tiny $1$};
     \draw[myblue] (-.75,-.5) -- (-.75,.5);\node at (-.90,-.425) {\tiny $2$};
     \draw[myblue] (0,-.5) .. controls (0,0) and (1,0) .. (1,.5); \node at (-.15,-.425) {\tiny $2$};
     \draw[myred] (1,-.5) .. controls (1,0) and (0,0) .. (0,.5);  \node at (1.15,-.425) {\tiny $1$};}
}
F_2F_1F_2F_1(2,0,0) , 
\]
and
\[
\delta'\colon F_2F_1F_2F_1(2,0,0)
\xra{
   \tikz[very thick,scale=0.9,baseline={([yshift=.9ex]current bounding box.center)}]{
     \draw[myred] (1.75,-.5) -- (1.75,.5);\node at (1.9,-.425) {\tiny $1$};
     \draw[myblue] (-.75,-.5) -- (-.75,.5);\node at (-.90,-.425) {\tiny $2$};
     \draw[myred] (0,-.5) .. controls (0,0) and (1,0) .. (1,.5); \node at (-.15,-.425) {\tiny $1$};
     \draw[myblue] (1,-.5) .. controls (1,0) and (0,0) .. (0,.5);  \node at (1.15,-.425) {\tiny $2$};}
}
F_2^2F_1^2(2,0,0) . 
\]
Proceeding as above we fix a basis
\[
\Biggl\langle\
\tikz[very thick,scale=.85,baseline={([yshift=1.1ex]current bounding box.center)}]{
  \draw[doubleblue] (0,-.75)node[below] {\tiny $2$} -- (0,-.5);
  \draw[myblue] (-.015,-.5) .. controls (0,-.48) and (-.25,-.48) .. (-.3,.5);
  \draw[myblue] (.015,-.5) .. controls (0,.-.48) and (.25,-.48) ..  (.7,.5);
  \draw[doublered] (1,-.75)node[below] {\tiny $1$} -- (1,0);
  \draw[myred] (.985,0) .. controls (.8,.12) and (.5,.12) .. (.3,.5);
  \draw[myred] (1.015,0) .. controls (1,.12) and (1.25,.12) ..  (1.3,.5);
  \node at (.5,-1.6) {\small $p=1$}; 
}
\quad,\quad
\tikz[very thick,scale=.85,baseline={([yshift=1.1ex]current bounding box.center)}]{
  \draw[doubleblue] (0,-.75)node[below] {\tiny $2$} -- (0,-.5);
  \draw[myblue] (-.015,-.5) .. controls (0,-.48) and (-.25,-.48) .. (-.3,.5);
  \draw[myblue] (.015,-.5) .. controls (0,.-.48) and (.25,-.48) ..  (.7,.5);
  \node[fill=myblue,circle,inner sep=1.5pt] at (-.18,-.25) {};
  \draw[doublered] (1,-.75)node[below] {\tiny $1$} -- (1,0);
  \draw[myred] (.985,0) .. controls (.8,.12) and (.5,.12) .. (.3,.5);
  \draw[myred] (1.015,0) .. controls (1,.12) and (1.25,.12) ..  (1.3,.5);
  \node at (.5,-1.6) {\small $p=0$}; 
}\ \Biggr\rangle
\]
of $F_2F_1F_2F_1(2,0,0)$ and
\[
\Biggl\langle\
\tikz[very thick,scale=.85,baseline={([yshift=1.1ex]current bounding box.center)}]{
  \draw[doubleblue] (0,-.75)node[below] {\tiny $2$} -- (0,-.5);
  \draw[myblue] (-.015,-.5) .. controls (0,-.48) and (-.25,-.48) .. (-.3,.5);
  \draw[myblue] (.015,-.5) .. controls (0,.-.48) and (.25,-.48) ..  (.3,.5);
  \draw[doublered] (1,-.75)node[below] {\tiny $1$} -- (1,0);
  \draw[myred] (.985,0) .. controls (1,.12) and (.75,.12) .. (.7,.5);
  \draw[myred] (1.015,0) .. controls (1,.12) and (1.25,.12) ..  (1.3,.5);
  \node at (.5,-1.6) {\small $p=0$}; 
}\quad,\quad
\tikz[very thick,scale=.85,baseline={([yshift=1.1ex]current bounding box.center)}]{
  \draw[doubleblue] (0,-.75)node[below] {\tiny $2$} -- (0,-.5);
  \draw[myblue] (-.015,-.5) .. controls (0,-.48) and (-.25,-.48) .. (-.3,.5);
  \draw[myblue] (.015,-.5) .. controls (0,.-.48) and (.25,-.48) ..  (.3,.5);
  \node[fill=myblue,circle,inner sep=1.5pt] at (-.18,-.25) {};
  \draw[doublered] (1,-.75)node[below] {\tiny $1$} -- (1,0);
  \draw[myred] (.985,0) .. controls (1,.12) and (.75,.12) .. (.7,.5);
  \draw[myred] (1.015,0) .. controls (1,.12) and (1.25,.12) ..  (1.3,.5);
  \node at (.5,-1.6) {\small $p=1$}; 
}\quad,\quad
\tikz[very thick,scale=.85,baseline={([yshift=1.1ex]current bounding box.center)}]{
  \draw[doubleblue] (0,-.75)node[below] {\tiny $2$} -- (0,-.5);
  \draw[myblue] (-.015,-.5) .. controls (0,-.48) and (-.25,-.48) .. (-.3,.5);
  \draw[myblue] (.015,-.5) .. controls (0,.-.48) and (.25,-.48) ..  (.3,.5);
  \draw[doublered] (1,-.75)node[below] {\tiny $1$} -- (1,0);
  \draw[myred] (.985,0) .. controls (1,.12) and (.75,.12) .. (.7,.5);
  \draw[myred] (1.015,0) .. controls (1,.12) and (1.25,.12) ..  (1.3,.5);
  \node[fill=myred,circle,inner sep=1.5pt] at (0.775,.25) {};
  \node at (.5,-1.6) {\small $p=1$}; 
}\quad,\quad
\tikz[very thick,scale=.85,baseline={([yshift=1.1ex]current bounding box.center)}]{
  \draw[doubleblue] (0,-.75)node[below] {\tiny $2$} -- (0,-.5);
  \draw[myblue] (-.015,-.5) .. controls (0,-.48) and (-.25,-.48) .. (-.3,.5);
  \draw[myblue] (.015,-.5) .. controls (0,.-.48) and (.25,-.48) ..  (.3,.5);
  \node[fill=myblue,circle,inner sep=1.5pt] at (-.18,-.25) {};
  \draw[doublered] (1,-.75)node[below] {\tiny $1$} -- (1,0);
  \draw[myred] (.985,0) .. controls (1,.12) and (.75,.12) .. (.7,.5);
  \draw[myred] (1.015,0) .. controls (1,.12) and (1.25,.12) ..  (1.3,.5);
  \node[fill=myred,circle,inner sep=1.5pt] at (0.775,.25) {};
  \node at (.5,-1.6) {\small $p=0$}; 
}\ \Biggr\rangle
\]
of $F_2^2F_1^2(2,2,0)$, to get
\begingroup\allowdisplaybreaks
\begin{align*}
\delta' \Biggl(\
\tikz[very thick,scale=.85,baseline={([yshift=0ex]current bounding box.center)}]{
  \draw[doubleblue] (0,-.75)node[below] {\tiny $2$} -- (0,-.5);
  \draw[myblue] (-.015,-.5) .. controls (0,-.48) and (-.25,-.48) .. (-.3,.5);
  \draw[myblue] (.015,-.5) .. controls (0,.-.48) and (.25,-.48) ..  (.7,.5);
  \draw[doublered] (1,-.75)node[below] {\tiny $1$} -- (1,0);
  \draw[myred] (.985,0) .. controls (.8,.12) and (.5,.12) .. (.3,.5);
  \draw[myred] (1.015,0) .. controls (1,.12) and (1.25,.12) ..  (1.3,.5);
}\Biggr) &= 
- t_{21}\ 
\tikz[very thick,scale=.85,baseline={([yshift=0ex]current bounding box.center)}]{
  \draw[doubleblue] (0,-.75)node[below] {\tiny $2$} -- (0,-.5);
  \draw[myblue] (-.015,-.5) .. controls (0,-.48) and (-.25,-.48) .. (-.3,.5);
  \draw[myblue] (.015,-.5) .. controls (0,.-.48) and (.25,-.48) ..  (.3,.5);
  \node[fill=myblue,circle,inner sep=1.5pt] at (-.18,-.25) {};
  \draw[doublered] (1,-.75)node[below] {\tiny $1$} -- (1,0);
  \draw[myred] (.985,0) .. controls (1,.12) and (.75,.12) .. (.7,.5);
  \draw[myred] (1.015,0) .. controls (1,.12) and (1.25,.12) ..  (1.3,.5);
} + t_{12}\ 
\tikz[very thick,scale=.85,baseline={([yshift=0ex]current bounding box.center)}]{
  \draw[doubleblue] (0,-.75)node[below] {\tiny $2$} -- (0,-.5);
  \draw[myblue] (-.015,-.5) .. controls (0,-.48) and (-.25,-.48) .. (-.3,.5);
  \draw[myblue] (.015,-.5) .. controls (0,.-.48) and (.25,-.48) ..  (.3,.5);
  \draw[doublered] (1,-.75)node[below] {\tiny $1$} -- (1,0);
  \draw[myred] (.985,0) .. controls (1,.12) and (.75,.12) .. (.7,.5);
  \draw[myred] (1.015,0) .. controls (1,.12) and (1.25,.12) ..  (1.3,.5);
  \node[fill=myred,circle,inner sep=1.5pt] at (0.775,.25) {};
}
\\
\delta' \Biggl(\
\tikz[very thick,scale=.85,baseline={([yshift=0ex]current bounding box.center)}]{
  \draw[doubleblue] (0,-.75)node[below] {\tiny $2$} -- (0,-.5);
  \draw[myblue] (-.015,-.5) .. controls (0,-.48) and (-.25,-.48) .. (-.3,.5);
  \draw[myblue] (.015,-.5) .. controls (0,.-.48) and (.25,-.48) ..  (.7,.5);
  \node[fill=myblue,circle,inner sep=1.5pt] at (-.18,-.25) {};
  \draw[doublered] (1,-.75)node[below] {\tiny $1$} -- (1,0);
  \draw[myred] (.985,0) .. controls (.8,.12) and (.5,.12) .. (.3,.5);
  \draw[myred] (1.015,0) .. controls (1,.12) and (1.25,.12) ..  (1.3,.5);
}
\Biggr) &=
t_{12}\ 
\tikz[very thick,scale=.85,baseline={([yshift=0ex]current bounding box.center)}]{
  \draw[doubleblue] (0,-.75)node[below] {\tiny $2$} -- (0,-.5);
  \draw[myblue] (-.015,-.5) .. controls (0,-.48) and (-.25,-.48) .. (-.3,.5);
  \draw[myblue] (.015,-.5) .. controls (0,.-.48) and (.25,-.48) ..  (.3,.5);
  \node[fill=myblue,circle,inner sep=1.5pt] at (-.18,-.25) {};
  \draw[doublered] (1,-.75)node[below] {\tiny $1$} -- (1,0);
  \draw[myred] (.985,0) .. controls (1,.12) and (.75,.12) .. (.7,.5);
  \draw[myred] (1.015,0) .. controls (1,.12) and (1.25,.12) ..  (1.3,.5);
  \node[fill=myred,circle,inner sep=1.5pt] at (0.775,.25) {};
}
\end{align*}
\endgroup
and
\begingroup\allowdisplaybreaks
\begin{align*}
\mu'\Biggl(\ 
\tikz[very thick,scale=.85,baseline={([yshift=0ex]current bounding box.center)}]{
  \draw[doubleblue] (0,-.75)node[below] {\tiny $2$} -- (0,-.5);
  \draw[myblue] (-.015,-.5) .. controls (0,-.48) and (-.25,-.48) .. (-.3,.5);
  \draw[myblue] (.015,-.5) .. controls (0,.-.48) and (.25,-.48) ..  (.3,.5);
  \draw[doublered] (1,-.75)node[below] {\tiny $1$} -- (1,0);
  \draw[myred] (.985,0) .. controls (1,.12) and (.75,.12) .. (.7,.5);
  \draw[myred] (1.015,0) .. controls (1,.12) and (1.25,.12) ..  (1.3,.5);
} \Biggr) =
\tikz[very thick,scale=.85,baseline={([yshift=0ex]current bounding box.center)}]{
  \draw[doubleblue] (0,-.75)node[below] {\tiny $2$} -- (0,-.5);
  \draw[myblue] (-.015,-.5) .. controls (0,-.48) and (-.25,-.48) .. (-.3,.5);
  \draw[myblue] (.015,-.5) .. controls (0,.-.48) and (.25,-.48) ..  (.7,.5);
  \draw[doublered] (1,-.75)node[below] {\tiny $1$} -- (1,0);
  \draw[myred] (.985,0) .. controls (.8,.12) and (.5,.12) .. (.3,.5);
  \draw[myred] (1.015,0) .. controls (1,.12) and (1.25,.12) ..  (1.3,.5);
}
&&\mu'\Biggl(\
\tikz[very thick,scale=.85,baseline={([yshift=0ex]current bounding box.center)}]{
  \draw[doubleblue] (0,-.75)node[below] {\tiny $2$} -- (0,-.5);
  \draw[myblue] (-.015,-.5) .. controls (0,-.48) and (-.25,-.48) .. (-.3,.5);
  \draw[myblue] (.015,-.5) .. controls (0,.-.48) and (.25,-.48) ..  (.3,.5);
  \node[fill=myblue,circle,inner sep=1.5pt] at (-.18,-.25) {};
  \draw[doublered] (1,-.75)node[below] {\tiny $1$} -- (1,0);
  \draw[myred] (.985,0) .. controls (1,.12) and (.75,.12) .. (.7,.5);
  \draw[myred] (1.015,0) .. controls (1,.12) and (1.25,.12) ..  (1.3,.5);
  \node[fill=myred,circle,inner sep=1.5pt] at (0.775,.25) {};
}
\Biggr) &= 0
\\
\mu'\biggl(\
\tikz[very thick,scale=.85,baseline={([yshift=0ex]current bounding box.center)}]{
  \draw[doubleblue] (0,-.75)node[below] {\tiny $2$} -- (0,-.5);
  \draw[myblue] (-.015,-.5) .. controls (0,-.48) and (-.25,-.48) .. (-.3,.5);
  \draw[myblue] (.015,-.5) .. controls (0,.-.48) and (.25,-.48) ..  (.3,.5);
  \node[fill=myblue,circle,inner sep=1.5pt] at (-.18,-.25) {};
  \draw[doublered] (1,-.75)node[below] {\tiny $1$} -- (1,0);
  \draw[myred] (.985,0) .. controls (1,.12) and (.75,.12) .. (.7,.5);
  \draw[myred] (1.015,0) .. controls (1,.12) and (1.25,.12) ..  (1.3,.5);
}
\Biggr) =
\tikz[very thick,scale=.85,baseline={([yshift=0ex]current bounding box.center)}]{
  \draw[doubleblue] (0,-.75)node[below] {\tiny $2$} -- (0,-.5);
  \draw[myblue] (-.015,-.5) .. controls (0,-.48) and (-.25,-.48) .. (-.3,.5);
  \draw[myblue] (.015,-.5) .. controls (0,.-.48) and (.25,-.48) ..  (.7,.5);
  \node[fill=myblue,circle,inner sep=1.5pt] at (-.18,-.25) {};
  \draw[doublered] (1,-.75)node[below] {\tiny $1$} -- (1,0);
  \draw[myred] (.985,0) .. controls (.8,.12) and (.5,.12) .. (.3,.5);
  \draw[myred] (1.015,0) .. controls (1,.12) and (1.25,.12) ..  (1.3,.5);
}
&&
\mu'\Biggl(\
\tikz[very thick,scale=.85,baseline={([yshift=0ex]current bounding box.center)}]{
  \draw[doubleblue] (0,-.75)node[below] {\tiny $2$} -- (0,-.5);
  \draw[myblue] (-.015,-.5) .. controls (0,-.48) and (-.25,-.48) .. (-.3,.5);
  \draw[myblue] (.015,-.5) .. controls (0,.-.48) and (.25,-.48) ..  (.3,.5);
  \draw[doublered] (1,-.75)node[below] {\tiny $1$} -- (1,0);
  \draw[myred] (.985,0) .. controls (1,.12) and (.75,.12) .. (.7,.5);
  \draw[myred] (1.015,0) .. controls (1,.12) and (1.25,.12) ..  (1.3,.5);
  \node[fill=myred,circle,inner sep=1.5pt] at (0.775,.25) {};
}
\Biggr) &=t_{21}t_{12}^{-1}\ 
\tikz[very thick,scale=.85,baseline={([yshift=0ex]current bounding box.center)}]{
  \draw[doubleblue] (0,-.75)node[below] {\tiny $2$} -- (0,-.5);
  \draw[myblue] (-.015,-.5) .. controls (0,-.48) and (-.25,-.48) .. (-.3,.5);
  \draw[myblue] (.015,-.5) .. controls (0,.-.48) and (.25,-.48) ..  (.7,.5);
  \node[fill=myblue,circle,inner sep=1.5pt] at (-.18,-.25) {};
  \draw[doublered] (1,-.75)node[below] {\tiny $1$} -- (1,0);
  \draw[myred] (.985,0) .. controls (.8,.12) and (.5,.12) .. (.3,.5);
  \draw[myred] (1.015,0) .. controls (1,.12) and (1.25,.12) ..  (1.3,.5);
}
\end{align*}
\endgroup
In this case we also have $\mu'\delta'=0$. 

Contrary to the previous case, we have $p(\mu')=1$ and $p(\delta')=0$.
The maps $\mu'$ and $\delta'$ can also be made to agree with~\cite{ORS}, but the parity is reversed (as with $\imath$
and $\varepsilon$ above).

%
%
\subsubsection{A sample computation}

We now compute the homology of the left-handed trefoil $T$ in its lowest and highest homological degrees.  
Consider the following presentation of $T$,

\[
\tikz[very thick,scale=.225,baseline={([yshift=0ex]current bounding box.center)}]{
\draw[doubleblack] (-2,-9)node[below] {\tiny $2$} to (-2,-2.5);\draw (-2,-2.5) to (-2,6.5);
     \draw[dotted] (-2,6.5) to (-2,12); 
\draw[doubleblack] ( 2,-9)node[below] {\tiny $2$} to (2,-7.5);\draw (2,-7.5) to (2,-3.5);
     \draw (2,-1.5) to (2,1.5);\draw [dotted] (2,1.5) to (2,6.5);\draw (2,6.5) to (2,8.5);\draw[dotted] (2,8.5) to (2,12);   
\draw[dotted] (6,-9)node[below] {\tiny $0$} to (6,-7.5);\draw (6,-7.5) to (6,-5.5);\draw[dotted] (6,-5.5) to (6,-2.5);
   \draw (6,-2.5) to (6,.5);\draw (6,2.5) to (6,5.5);\draw[dotted] (6,5.5) to (6,8.5);\draw (6,8.5) to (6,10.5);
   \draw[dotted] (6,10.5) to (6,12);
   \draw [dotted] (10,-9)node[below] {\tiny $0$} to (10,-5.5);\draw (10,-5.5) to (10,-3.5);
   \draw[dotted] (10,-3.5) to (10,1.5);\draw (10,1.5) to (10,4.5);\draw (10,6.5) to (10,10.5);
   \draw[doubleblack] (10,10.5) to (10,12);   
\draw [dotted] (14,-9)node[below] {\tiny $0$} to (14,-3.5);\draw (14,-3.5) to (14,5.5);
   \draw[doubleblack] (14,5.5) to (14,12);
\draw [directed=.55] (2,-7.5) to (6,-7.5);
\draw [directed=.55] (6,-5.5) to (10,-5.5);
\draw [directed=.55] (10,-3.5) to (14,-3.5);
\draw [directed=.55] (-2,6.5) to (2,6.5);
\draw [directed=.55] (2,8.5) to (6,8.5);
\draw [directed=.55] (6,10.5) to (10,10.5);
\draw [directed=.6] (-2,-2.5) to (2,-2.5);\draw [directed=.6] (2,-2.5) to (6,-2.5);
\draw [directed=.6] (2,1.5) to (6,1.5);\draw [directed=.6] (6,1.5) to (10,1.5);
\draw [directed=.6] (6,5.5) to (10,5.5);\draw [directed=.6] (10,5.5) to (14,5.5);
}
\]

The computation of $H_0(T)$ is fairly simple: up to an overall degree shift it is the homology in degree 1 of the complex 
\begin{equation}\label{eq:cplx-Tzero}
 \xy
 (0,-1.5)*{
 \begin{tikzpicture}
  \node at (7, 0) {$q^3F_tF_{342312}F_b\mu$};
  \node at (- 2, 2) {$q^2F_tF_{432312}F_b\mu$};
   \node at (-2,1) {$\bigoplus$};  
  \node at (-2, 0) {$q^2F_tF_{343212}F_b\mu$};
   \node at (-2,-1) {$\bigoplus$};  
  \node at (-2, -2) {$q^2F_tF_{342321}F_b\mu$};
  \draw[semithick,rdirected=0.01] (5.5, .75) -- (-.5, 1.9);
  \draw[semithick,rdirected=0.01] (5.5, 0) -- (-.5,  0);
  \draw[semithick,rdirected=0.01] (5.5,-.75) -- (-.5,-1.9);
  \node at (3.75,1.75) {
\tikz[very thick,scale=0.6,baseline={([yshift=.9ex]current bounding box.center)}]{
     \draw[black] (0,-.5)node[below] {\tiny $4$} .. controls (0,0) and (1,0) .. (1,.5); 
     \draw[mygreen] (1,-.5)node[below] {\textcolor{black}{\tiny $3$}} .. controls (1,0) and (0,0) .. (0,.5); 
     \draw[myblue] (1.5,-.5)node[below] {\textcolor{black}{\tiny $2$}} -- (1.5,.5);
     \draw[mygreen] (2,-.5)node[below] {\textcolor{black}{\tiny $3$}} -- (2,.5);
     \draw[myred] (2.5,-.5)node[below] {\textcolor{black}{\tiny $1$}} -- (2.5,.5);
     \draw[myblue] (3,-.5)node[below] {\textcolor{black}{\tiny $2$}} -- (3,.5);}
  };
\node at (1.6,.5) {
\tikz[very thick,scale=0.6,baseline={([yshift=.9ex]current bounding box.center)}]{
    \draw[mygreen] (-1,-.5)node[below] {\textcolor{black}{\tiny $3$}} -- (-1,.5);
     \draw[black] (-.5,-.5)node[below] {\tiny $4$} -- (-.5,.5);
    \draw[mygreen] (0,-.5)node[below] {\textcolor{black}{\tiny $3$}} .. controls (0,0) and (1,0) .. (1,.5); 
     \draw[myblue] (1,-.5)node[below] {\textcolor{black}{\tiny $2$}} .. controls (1,0) and (0,0) .. (0,.5); 
      \draw[myred] (1.5,-.5)node[below] {\textcolor{black}{\tiny $1$}} -- (1.5,.5);
     \draw[myblue] (2,-.5)node[below] {\textcolor{black}{\tiny $2$}} -- (2,.5);}
};  
\node at (3.8,-2) {
\tikz[very thick,scale=0.6,baseline={([yshift=.9ex]current bounding box.center)}]{
     \draw[mygreen] (-2,-.5)node[below] {\textcolor{black}{\tiny $3$}} -- (-2,.5);
     \draw[black] (-1.5,-.5)node[below] {\tiny $4$} -- (-1.5,.5);
     \draw[myblue] (-1,-.5)node[below] {\textcolor{black}{\tiny $2$}} -- (-1,.5);
     \draw[mygreen] (-.5,-.5)node[below] {\textcolor{black}{\tiny $3$}} -- (-.5,.5);
     \draw[myblue] (0,-.5)node[below] {\textcolor{black}{\tiny $2$}} .. controls (0,0) and (1,0) .. (1,.5); 
     \draw[myred] (1,-.5)node[below] {\textcolor{black}{\tiny $1$}} .. controls (1,0) and (0,0) .. (0,.5);} 
};
 \end{tikzpicture}
 }; \endxy 
\end{equation}

The three terms in homological degree zero are isomorphic to $F_{43^{(2)}2^{(2)}1}$. 
Composing the isomorphisms from $F_{43^{(2)}2^{(2)}1}$ to $F_{432312}$, $F_{343212}$ and to $F_{342321}$ with the corresponding
maps above gives three maps that differ by a sign. 

By inspection, one sees that up to a sign, these theree maps are equal to the map $\delta$ from the case~~\eqref{eq:frob} in the previous
subsection.
The cokernel map in~\eqref{eq:cplx-Tzero} is therefore two-dimensional. 
Adding the degree shifts one obtains
\[
H_0(T) = q^{-1}\Bbbk \oplus q^{-3}\Bbbk . 
\]

We now compute $H_{-3}(H)$.
Up to an overall degree shift it is computed as the homology in
degree zero of the complex 
\[
  \begin{tikzpicture}
  \node at (-7, 0) {$F_{321}F_{433221}F_{432}\mu$};
  \node at ( 2, 2) {$qF_{321}F_{343221}F_{432}\mu$};
   \node at (2,1) {$\bigoplus$};  
  \node at ( 2, 0) {$qF_{321}F_{432321}F_{432}\mu$};
   \node at (2,-1) {$\bigoplus$};  
  \node at ( 2, -2) {$qF_{321}F_{433212}F_{432}\mu$};
  \draw[semithick,directed=1] (-5, .75) -- (0, 1.9);
  \draw[semithick,directed=1] (-5, 0) -- (0,  0);
  \draw[semithick,directed=1] (-5,-.75) -- (0,-1.9);
  \node at (-3.75,1.75) {
    $\dotsm$\! \tikz[very thick,scale=0.6,baseline={([yshift=.9ex]current bounding box.center)}]{
     \draw[black] (0,-.5)node[below] {\tiny $4$} .. controls (0,0) and (1,0) .. (1,.5); 
     \draw[mygreen] (1,-.5)node[below] {\textcolor{black}{\tiny $3$}} .. controls (1,0) and (0,0) .. (0,.5); 
     \draw[mygreen] (1.5,-.5)node[below] {\textcolor{black}{\tiny $3$}} -- (1.5,.5);
     \draw[myblue] (2,-.5)node[below] {\textcolor{black}{\tiny $2$}} -- (2,.5);
     \draw[myblue] (2.5,-.5)node[below] {\textcolor{black}{\tiny $2$}} -- (2.5,.5);
     \draw[myred] (3,-.5)node[below] {\textcolor{black}{\tiny $1$}} -- (3,.5);}
\!$\dotsm$
  };
\node at (-1.6,.5) {
$\dotsm$\! \tikz[very thick,scale=0.6,baseline={([yshift=.9ex]current bounding box.center)}]{
    \draw[black] (-1,-.5)node[below] {\tiny $4$} -- (-1,.5);
     \draw[mygreen] (-.5,-.5)node[below] {\textcolor{black}{\tiny $3$}} -- (-.5,.5);
    \draw[mygreen] (0,-.5)node[below] {\textcolor{black}{\tiny $3$}} .. controls (0,0) and (1,0) .. (1,.5); 
     \draw[myblue] (1,-.5)node[below] {\textcolor{black}{\tiny $2$}} .. controls (1,0) and (0,0) .. (0,.5); 
      \draw[myblue] (1.5,-.5)node[below] {\textcolor{black}{\tiny $2$}} -- (1.5,.5);
     \draw[myred] (2,-.5)node[below] {\textcolor{black}{\tiny $1$}} -- (2,.5);}
\!$\dotsm$
};  
\node at (-3.8,-2) {
$\dotsm$\! \tikz[very thick,scale=0.6,baseline={([yshift=.9ex]current bounding box.center)}]{
     \draw[black] (-2,-.5)node[below] {\tiny $4$} -- (-2,.5);
     \draw[mygreen] (-1.5,-.5)node[below] {\textcolor{black}{\tiny $3$}} -- (-1.5,.5);
     \draw[mygreen] (-1,-.5)node[below] {\textcolor{black}{\tiny $3$}} -- (-1,.5);
     \draw[myblue] (-.5,-.5)node[below] {\textcolor{black}{\tiny $2$}} -- (-.5,.5);
     \draw[myblue] (0,-.5)node[below] {\textcolor{black}{\tiny $2$}} .. controls (0,0) and (1,0) .. (1,.5); 
     \draw[myred] (1,-.5)node[below] {\textcolor{black}{\tiny $1$}} .. controls (1,0) and (0,0) .. (0,.5);} 
\!$\dotsm$};
\end{tikzpicture} 
\]
Here $\mu=(2,2,0,0,0)$ and
the factors $F_{321}$ and $F_{432}$ are the upper and lower closures of the diagram.
We write $F_t$ for $F_{321}$ and $F_b$ for $F_{432}$ and sometimes we write
$F_tF_{433221}F_b\mu$ instead of  $F_{321}F_{433221}F_{432}\mu$, etc...,
and we only depict the pertinent part of the morphisms. 

\smallskip

In the following we will use the identities 
\begin{equation}\label{eq:dotjumpsover}
\dotsm 
  \tikz[very thick,scale=0.8,baseline={([yshift=.6ex]current bounding box.center)}]{
     \draw[black] (1,-.5)node[below] {\tiny $4$} -- (1,.5)node[midway,fill=black,circle,inner sep=1.3pt]{}; 
     \draw[doublegreen] (1.5,-.5)node[below] {\tiny $3$} -- (1.5,.5);
     \draw[doubleblue] (2,-.5)node[below] {\tiny $2$} -- (2,.5);
     \draw[myred] (2.5,-.5)node[below] {\textcolor{black}{\tiny $1$}} -- (2.5,.5);  
     \draw[black] (3,-.5)node[below] {\textcolor{black}{\tiny $4$}} -- (3,.5);
     \draw[mygreen] (3.5,-.5)node[below] {\textcolor{black}{\tiny $3$}} -- (3.5,.5);
     \draw[myblue] (4,-.5)node[below] {\textcolor{black}{\tiny $2$}} -- (4,.5);
\node at (4.5,0) {\small $\mu$};}  
  =
\dotsm 
  \tikz[very thick,scale=0.8,baseline={([yshift=.6ex]current bounding box.center)}]{
     \draw[black] (1,-.5)node[below] {\tiny $4$} -- (1,.5); 
     \draw[doublegreen] (1.5,-.5)node[below] {\tiny $3$} -- (1.5,.5);
     \draw[doubleblue] (2,-.5)node[below] {\tiny $2$} -- (2,.5);
     \draw[myred] (2.5,-.5)node[below] {\textcolor{black}{\tiny $1$}} -- (2.5,.5);  
     \draw[black] (3,-.5)node[below] {\textcolor{black}{\tiny $4$}} -- (3,.5)node[midway,fill=black,circle,inner sep=1.3pt]{};
     \draw[mygreen] (3.5,-.5)node[below] {\textcolor{black}{\tiny $3$}} -- (3.5,.5);
     \draw[myblue] (4,-.5)node[below] {\textcolor{black}{\tiny $2$}} -- (4,.5);
     \node at (4.5,0) {\small $\mu$};}
  = -\tfrac{t_{12}}{t_{21}}\tfrac{t_{23}}{t_{32}}\tfrac{t_{34}}{t_{43}}
 \dotsm 
  \tikz[very thick,scale=0.8,baseline={([yshift=.6ex]current bounding box.center)}]{
     \draw[black] (1,-.5)node[below] {\tiny $4$} -- (1,.5); 
     \draw[doublegreen] (1.5,-.5)node[below] {\tiny $3$} -- (1.5,.5);
     \draw[doubleblue] (2,-.5)node[below] {\tiny $2$} -- (2,.5);
     \draw[myred] (2.5,-.5)node[below] {\textcolor{black}{\tiny $1$}} -- (2.5,.5)node[midway,fill=myred,circle,inner sep=1.3pt]{};  
     \draw[black] (3,-.5)node[below] {\textcolor{black}{\tiny $4$}} -- (3,.5);
     \draw[mygreen] (3.5,-.5)node[below] {\textcolor{black}{\tiny $3$}} -- (3.5,.5);
     \draw[myblue] (4,-.5)node[below] {\textcolor{black}{\tiny $2$}} -- (4,.5);
     \node at (4.5,0) {\small $\mu$};} 
\end{equation}
The first equality follows from~\fullref{lem:dotjumpdlb} after using~\eqref{eq:klrR2} on the second strand labeled 4 to
pull it to the left. 
The second equality can be checked by a applying~\eqref{eq:klrR2} three times.

\smallskip

Coming back to $H_{-3}(T)$ we apply the isomorphisms
\begingroup\allowdisplaybreaks
\begin{align*}
F_{433221} &\simeq qF_{4332^{(2)}1}\oplus q^{-1}F_{4332^{(2)}1} , 
\\
F_{343221} &\simeq qF_{3432^{(2)}1}\oplus q^{-1}F_{3432^{(2)}1} ,
\\ 
F_{433212} &\simeq F_{4332^{(2)}1} ,
\end{align*}
\endgroup
to obtain the isomorphic complex 
\[
\begin{pmatrix}
  qF_tF_{4332^{(2)}1}F_b\mu
  \\[1ex]
  q^{-1}F_tF_{4332^{(2)}1}F_b\mu
\end{pmatrix}
\xra{
  \begin{pmatrix}
\tikz[very thick,scale=0.6,baseline={([yshift=0ex]current bounding box.center)}]{
     \draw[black] (0,-.5)node[below] {\tiny $4$} .. controls (0,0) and (1,0) .. (1,.5); 
     \draw[mygreen] (1,-.5)node[below] {\textcolor{black}{\tiny $3$}} .. controls (1,0) and (0,0) .. (0,.5); 
     \draw[mygreen] (1.5,-.5)node[below] {\textcolor{black}{\tiny $3$}} -- (1.5,.5);
     \draw[doubleblue] (2,-.5)node[below] {\tiny $2$} -- (2,.5);
     \draw[myred] (2.5,-.5)node[below] {\textcolor{black}{\tiny $1$}} -- (2.5,.5);}  
& 0
\\[1.5ex]
0 &
-\tikz[very thick,scale=0.6,baseline={([yshift=0ex]current bounding box.center)}]{
     \draw[black] (0,-.5)node[below] {\tiny $4$} .. controls (0,0) and (1,0) .. (1,.5); 
     \draw[mygreen] (1,-.5)node[below] {\textcolor{black}{\tiny $3$}} .. controls (1,0) and (0,0) .. (0,.5); 
     \draw[mygreen] (1.5,-.5)node[below] {\textcolor{black}{\tiny $3$}} -- (1.5,.5);
     \draw[doubleblue] (2,-.5)node[below] {\tiny $2$} -- (2,.5);
     \draw[myred] (2.5,-.5)node[below] {\textcolor{black}{\tiny $1$}} -- (2.5,.5);}  
\\[1.5ex]
\tikz[very thick,scale=0.6,baseline={([yshift=.8ex]current bounding box.center)}]{
  \draw[black] (-1.75,.5) -- (-1.75,-.5) node[below] {\tiny $4$};  
  \draw[mygreen] (-1.25,.5) -- (-1.25,-.5) node[below] {\textcolor{black}{\tiny $3$}};  
  \draw[mygreen] (-.225,.5)  .. controls (-.35,.1) and (-.8,.1) ..  (-.75,-.5)node[below] {\textcolor{black}{\tiny $3$}};
  \draw[doubleblue] (0,-.5)node[below] {\tiny $2$} -- (0,0);
  \draw[myblue] (-.015,0) .. controls (0,.12) and (-.25,-.06) .. (-.75,.5);
  \draw[myblue] (.015,0) .. controls (0,.12) and (.25,.12) ..  (.3,.5);
  \draw[myred] (.75,.5) -- (.75,-.5) node[below] {\textcolor{black}{\tiny $1$}};}  
&
\tikz[very thick,scale=0.6,baseline={([yshift=.8ex]current bounding box.center)}]{
  \draw[black] (-1.75,.5) -- (-1.75,-.5) node[below] {\tiny $4$};  
  \draw[mygreen] (-1.25,.5) -- (-1.25,-.5) node[below] {\textcolor{black}{\tiny $3$}};  
  \draw[mygreen] (-.225,.5)  .. controls (-.35,.3) and (-.8,.2) ..  (-.75,-.5)node[below] {\textcolor{black}{\tiny $3$}};
  \draw[doubleblue] (0,-.5)node[below] {\tiny $2$} -- (0,0);
  \draw[myblue] (-.015,0) .. controls (0,.12) and (-.25,-.06) .. (-.75,.5);
  \draw[myblue] (.015,0) .. controls (0,.12) and (.25,.12) ..  (.3,.5);
  \draw[myred] (.75,.5) -- (.75,-.5) node[below] {\textcolor{black}{\tiny $1$}};
  \node[fill=myblue,circle,inner sep=1.3pt] at (-.22,.1) {};}
\\[1.5ex]
t_{21}
\tikz[very thick,scale=0.6,baseline={([yshift=0.6ex]current bounding box.center)}]{
     \draw[black] (.5,-.5)node[below] {\tiny $4$} -- (.5,.5); 
     \draw[mygreen] (1,-.5)node[below] {\textcolor{black}{\tiny $3$}} -- (1,.5); 
     \draw[mygreen] (1.5,-.5)node[below] {\textcolor{black}{\tiny $3$}} -- (1.5,.5);
     \draw[doubleblue] (2,-.5)node[below] {\tiny $2$} -- (2,.5);
     \draw[myred] (2.5,-.5)node[below] {\textcolor{black}{\tiny $1$}} -- (2.5,.5);}  
&
t_{12}
\tikz[very thick,scale=0.6,baseline={([yshift=0.6ex]current bounding box.center)}]{
     \draw[black] (.5,-.5)node[below] {\tiny $4$} -- (.5,.5); 
     \draw[mygreen] (1,-.5)node[below] {\textcolor{black}{\tiny $3$}} -- (1,.5); 
     \draw[mygreen] (1.5,-.5)node[below] {\textcolor{black}{\tiny $3$}} -- (1.5,.5);
     \draw[doubleblue] (2,-.5)node[below] {\tiny $2$} -- (2,.5);
     \draw[myred] (2.5,-.5)node[below] {\textcolor{black}{\tiny $1$}} -- (2.5,.5)node[midway,fill=myred,circle,inner sep=1.3pt]{};}  
  \end{pmatrix}
}
\begin{pmatrix}
q^2F_tF_{3432^{(2)}1}F_b\mu
\\[1ex]
F_tF_{3432^{(2)}1}F_b\mu
\\[1ex]
qF_tF_{432321}F_b\mu
\\[1ex]
qF_tF_{4332^{(2)}1}F_b\mu
\end{pmatrix} . 
\]

By Gaussian elimination of the acyclic complex 
\[
qF_tF_{4332^{(2)}1}F_b\mu 
\xra{t_{21}
\tikz[very thick,scale=0.65,baseline={([yshift=0.6ex]current bounding box.center)}]{
     \draw[black] (.5,-.5)node[below] {\tiny $4$} -- (.5,.5); 
     \draw[mygreen] (1,-.5)node[below] {\textcolor{black}{\tiny $3$}} -- (1,.5); 
     \draw[mygreen] (1.5,-.5)node[below] {\textcolor{black}{\tiny $3$}} -- (1.5,.5);
     \draw[doubleblue] (2,-.5)node[below] {\tiny $2$} -- (2,.5);
     \draw[myred] (2.5,-.5)node[below] {\textcolor{black}{\tiny $1$}} -- (2.5,.5);}  }
qF_tF_{4332^{(2)}1}F_b\mu . 
\]
we obtain the homotopy equivalent complex
\[
  q^{-1}F_tF_{4332^{(2)}1}F_b\mu
\xra{
  \begin{pmatrix}
-\tfrac{t_{12}}{t_{21}}
    \tikz[very thick,scale=0.6,baseline={([yshift=0.6ex]current bounding box.center)}]{
     \draw[black] (0,-.5)node[below] {\tiny $4$} .. controls (0,0) and (1,0) .. (1,.5); 
     \draw[mygreen] (1,-.5)node[below] {\textcolor{black}{\tiny $3$}} .. controls (1,0) and (0,0) .. (0,.5); 
     \draw[mygreen] (1.5,-.5)node[below] {\textcolor{black}{\tiny $3$}} -- (1.5,.5);
     \draw[doubleblue] (2,-.5)node[below] {\tiny $2$} -- (2,.5);
     \draw[myred] (2.5,-.5)node[below] {\textcolor{black}{\tiny $1$}} -- (2.5,.5)node[near start,fill=myred,circle,inner sep=1.3pt]{};}  
\\[1.5ex]
-\tikz[very thick,scale=0.6,baseline={([yshift=0ex]current bounding box.center)}]{
     \draw[black] (0,-.5)node[below] {\tiny $4$} .. controls (0,0) and (1,0) .. (1,.5); 
     \draw[mygreen] (1,-.5)node[below] {\textcolor{black}{\tiny $3$}} .. controls (1,0) and (0,0) .. (0,.5); 
     \draw[mygreen] (1.5,-.5)node[below] {\textcolor{black}{\tiny $3$}} -- (1.5,.5);
     \draw[doubleblue] (2,-.5)node[below] {\tiny $2$} -- (2,.5);
     \draw[myred] (2.5,-.5)node[below] {\textcolor{black}{\tiny $1$}} -- (2.5,.5);}  
\\[1.5ex]
\tikz[very thick,scale=0.6,baseline={([yshift=.8ex]current bounding box.center)}]{
  \draw[black] (-1.75,.5) -- (-1.75,-.5) node[below] {\tiny $4$};  
  \draw[mygreen] (-1.25,.5) -- (-1.25,-.5) node[below] {\textcolor{black}{\tiny $3$}};  
  \draw[mygreen] (-.225,.5)  .. controls (-.35,.3) and (-.8,.2) ..  (-.75,-.5)node[below] {\textcolor{black}{\tiny $3$}};
  \draw[doubleblue] (0,-.5)node[below] {\tiny $2$} -- (0,0);
  \draw[myblue] (-.015,0) .. controls (0,.12) and (-.25,-.06) .. (-.75,.5);
  \draw[myblue] (.015,0) .. controls (0,.12) and (.25,.12) ..  (.3,.5);
  \draw[myred] (.75,.5) -- (.75,-.5) node[below] {\textcolor{black}{\tiny $1$}};
  \node[fill=myblue,circle,inner sep=1.3pt] at (-.22,.1) {};}
-\tfrac{t_{12}}{t_{21}}
\tikz[very thick,scale=0.6,baseline={([yshift=.8ex]current bounding box.center)}]{
  \draw[black] (-1.75,.5) -- (-1.75,-.5) node[below] {\tiny $4$};  
  \draw[mygreen] (-1.25,.5) -- (-1.25,-.5) node[below] {\textcolor{black}{\tiny $3$}};  
  \draw[mygreen] (-.225,.5)  .. controls (-.35,.1) and (-.8,.1) ..  (-.75,-.5)node[below] {\textcolor{black}{\tiny $3$}};
  \draw[doubleblue] (0,-.5)node[below] {\tiny $2$} -- (0,0);
  \draw[myblue] (-.015,0) .. controls (0,.12) and (-.25,-.06) .. (-.75,.5);
  \draw[myblue] (.015,0) .. controls (0,.12) and (.25,.12) ..  (.3,.5);
  \draw[myred] (.75,.5) -- (.75,-.5) node[below] {\textcolor{black}{\tiny $1$}}node[near end,fill=myred,circle,inner sep=1.3pt]{};}  
  \end{pmatrix}
}
\begin{pmatrix}
q^2F_tF_{3432^{(2)}1}F_b\mu
\\[1ex]
F_tF_{3432^{(2)}1}F_b\mu
\\[1ex]
qF_tF_{432321}F_b\mu
\end{pmatrix} . 
\]
Applying the isomorphisms
\begin{equation}\label{eq:isoFtrefoil}
  F_{4332^{(2)}1}\simeq qF_{43^{(2)}2^{(2)}1}\oplus q^{-1}F_{43^{(2)}2^{(2)}1}  
\end{equation}
  and 
$F_{3432^{(2)}1}\simeq F_{43^{(2)}2^{(2)}1}$ gives the isomorphic complex 
\[
\begin{pmatrix}
 F_tF_{43^{(2)}2^{(2)}1}F_b\mu
  \\[1ex]
 q^{-2}F_tF_{43^{(2)}2^{(2)}1}F_b\mu
 \end{pmatrix}
\xra{
  \begin{pmatrix}
\tfrac{t_{12}t_{34}}{t_{21}}
\tikz[very thick,scale=0.6,baseline={([yshift=0.6ex]current bounding box.center)}]{
     \draw[black] (1,-.5)node[below] {\tiny $4$} -- (1,.5); 
     \draw[doublegreen] (1.5,-.5)node[below] {\tiny $3$} -- (1.5,.5);
     \draw[doubleblue] (2,-.5)node[below] {\tiny $2$} -- (2,.5);
     \draw[myred] (2.5,-.5)node[below] {\textcolor{black}{\tiny $1$}} -- (2.5,.5)node[midway,fill=myred,circle,inner sep=1.3pt]{};}  
& 0
\\[1.5ex]
-t_{34} 
\tikz[very thick,scale=0.6,baseline={([yshift=0.6ex]current bounding box.center)}]{
     \draw[black] (1,-.5)node[below] {\tiny $4$} -- (1,.5); 
     \draw[doublegreen] (1.5,-.5)node[below] {\tiny $3$} -- (1.5,.5);
     \draw[doubleblue] (2,-.5)node[below] {\tiny $2$} -- (2,.5);
     \draw[myred] (2.5,-.5)node[below] {\textcolor{black}{\tiny $1$}} -- (2.5,.5);}  
&
-t_{43}
\tikz[very thick,scale=0.6,baseline={([yshift=0.6ex]current bounding box.center)}]{
     \draw[black] (1,-.5)node[below] {\tiny $4$} -- (1,.5)node[midway,fill=black,circle,inner sep=1.3pt]{}; 
     \draw[doublegreen] (1.5,-.5)node[below] {\tiny $3$} -- (1.5,.5);
     \draw[doubleblue] (2,-.5)node[below] {\tiny $2$} -- (2,.5);
     \draw[myred] (2.5,-.5)node[below] {\textcolor{black}{\tiny $1$}} -- (2.5,.5);}  
\\[1.5ex]
f & g
  \end{pmatrix}
}
\begin{pmatrix}
 q^2F_tF_{43^{(2)}2^{(2)}1}F_b\mu
\\[1ex]
 F_tF_{43^{(2)}2^{(2)}1}F_b\mu
\\[1ex]
qF_tF_{432321}F_b\mu
\end{pmatrix} , 
\]
or
\[
\begin{pmatrix}
 F_tF_{43^{(2)}2^{(2)}1}F_b\mu
  \\[1ex]
 q^{-2}F_tF_{43^{(2)}2^{(2)}1}F_b\mu
 \end{pmatrix}
\xra{
  \begin{pmatrix}
\tfrac{t_{12}t_{34}}{t_{21}}
\tikz[very thick,scale=0.6,baseline={([yshift=0.6ex]current bounding box.center)}]{
     \draw[black] (1,-.5)node[below] {\tiny $4$} -- (1,.5); 
     \draw[doublegreen] (1.5,-.5)node[below] {\tiny $3$} -- (1.5,.5);
     \draw[doubleblue] (2,-.5)node[below] {\tiny $2$} -- (2,.5);
     \draw[myred] (2.5,-.5)node[below] {\textcolor{black}{\tiny $1$}} -- (2.5,.5)node[midway,fill=myred,circle,inner sep=1.3pt]{};}  
& 0
\\[1.5ex]
-t_{34} 
\tikz[very thick,scale=0.6,baseline={([yshift=0.6ex]current bounding box.center)}]{
     \draw[black] (1,-.5)node[below] {\tiny $4$} -- (1,.5); 
     \draw[doublegreen] (1.5,-.5)node[below] {\tiny $3$} -- (1.5,.5);
     \draw[doubleblue] (2,-.5)node[below] {\tiny $2$} -- (2,.5);
     \draw[myred] (2.5,-.5)node[below] {\textcolor{black}{\tiny $1$}} -- (2.5,.5);}  
&
t_{34}\tfrac{t_{12}}{t_{21}}\tfrac{t_{23}}{t_{32}}
\tikz[very thick,scale=0.6,baseline={([yshift=0.6ex]current bounding box.center)}]{
     \draw[black] (1,-.5)node[below] {\tiny $4$} -- (1,.5); 
     \draw[doublegreen] (1.5,-.5)node[below] {\tiny $3$} -- (1.5,.5);
     \draw[doubleblue] (2,-.5)node[below] {\tiny $2$} -- (2,.5);
     \draw[myred] (2.5,-.5)node[below] {\textcolor{black}{\tiny $1$}} -- (2.5,.5)node[midway,fill=myred,circle,inner sep=1.3pt]{};}  
\\[1.5ex]
f & g
  \end{pmatrix}
}
\begin{pmatrix}
 q^2F_tF_{43^{(2)}2^{(2)}1}F_b\mu
\\[1ex]
 F_tF_{43^{(2)}2^{(2)}1}F_b\mu
\\[1ex]
qF_tF_{432321}F_b\mu
\end{pmatrix} , 
\]
where $f$ (resp. $g$) is the composite of the map from $F_{43^{(2)}2^{(2)}1}$
(resp. $q^{-2}F_{43^{(2)}2^{(2)}1}$) to $q^{-1}F_{4332^{(2)}1}$ in~\eqref{eq:isoFtrefoil} and 
\[
\tikz[very thick,scale=0.8,baseline={([yshift=.7ex]current bounding box.center)}]{
  \draw[black] (-1.75,.5) -- (-1.75,-.5) node[below] {\tiny $4$};  
  \draw[mygreen] (-1.25,.5) -- (-1.25,-.5) node[below] {\textcolor{black}{\tiny $3$}};  
  \draw[mygreen] (-.225,.5)  .. controls (-.35,.3) and (-.8,.2) ..  (-.75,-.5)node[below] {\textcolor{black}{\tiny $3$}};
  \draw[doubleblue] (0,-.5)node[below] {\tiny $2$} -- (0,0);
  \draw[myblue] (-.015,0) .. controls (0,.12) and (-.25,-.06) .. (-.75,.5);
  \draw[myblue] (.015,0) .. controls (0,.12) and (.25,.12) ..  (.3,.5);
  \draw[myred] (.75,.5) -- (.75,-.5) node[below] {\textcolor{black}{\tiny $1$}};
  \node[fill=myblue,circle,inner sep=1.3pt] at (-.22,.1) {};}
-\tfrac{t_{12}}{t_{21}}
\tikz[very thick,scale=0.8,baseline={([yshift=.7ex]current bounding box.center)}]{
  \draw[black] (-1.75,.5) -- (-1.75,-.5) node[below] {\tiny $4$};  
  \draw[mygreen] (-1.25,.5) -- (-1.25,-.5) node[below] {\textcolor{black}{\tiny $3$}};  
  \draw[mygreen] (-.225,.5)  .. controls (-.35,.1) and (-.8,.1) ..  (-.75,-.5)node[below] {\textcolor{black}{\tiny $3$}};
  \draw[doubleblue] (0,-.5)node[below] {\tiny $2$} -- (0,0);
  \draw[myblue] (-.015,0) .. controls (0,.12) and (-.25,-.06) .. (-.75,.5);
  \draw[myblue] (.015,0) .. controls (0,.12) and (.25,.12) ..  (.3,.5);
  \draw[myred] (.75,.5) -- (.75,-.5) node[below] {\textcolor{black}{\tiny $1$}}node[near end,fill=myred,circle,inner sep=1.3pt]{};}  
\]

Gaussian elimination of the acyclic complex
\[
F_tF_{43^{(2)}2^{(2)}1}F_b\mu 
\xra{
-t_{34} 
\tikz[very thick,scale=0.6,baseline={([yshift=.8ex]current bounding box.center)}]{
     \draw[black] (1,-.5)node[below] {\tiny $4$} -- (1,.5); 
     \draw[doublegreen] (1.5,-.5)node[below] {\tiny $3$} -- (1.5,.5);
     \draw[doubleblue] (2,-.5)node[below] {\tiny $2$} -- (2,.5);
     \draw[myred] (2.5,-.5)node[below] {\textcolor{black}{\tiny $1$}} -- (2.5,.5);} 
}
F_tF_{43^{(2)}2^{(2)}1}F_b\mu , 
\]
yields the homotopy equivalent complex 
\[
 q^{-2}F_tF_{43^{(2)}2^{(2)}1}F_b\mu
\xra{
  \begin{pmatrix}
    0
    \\[1.5ex]
h
    \end{pmatrix}
}
\begin{pmatrix}
 q^2F_tF_{43^{(2)}2^{(2)}1}F_b\mu
\\[1ex]
qF_tF_{432321}F_b\mu 
\end{pmatrix} ,  
\]
where 
\[
h =
  \tikz[very thick,scale=.8,baseline={([yshift=0.6ex]current bounding box.center)}]{
  \draw[doublegreen] (0,-.75)node[below] {\tiny $3$} -- (0,-.5);
  \draw[mygreen] (-.015,-.5) .. controls (0,-.48) and (-.25,-.48) .. (-.3,.5);
  \draw[mygreen] (.015,-.5) .. controls (0,.-.48) and (.25,-.48) ..  (.7,.5);
  \node[fill=mygreen,circle,inner sep=1.5pt] at (-.15,-.35) {};
  \draw[doubleblue] (1,-.75)node[below] {\tiny $2$} -- (1,0);
  \draw[myblue] (.985,0) .. controls (.8,.12) and (.5,.12) .. (.3,.5);
  \draw[myblue] (1.015,0) .. controls (1,.12) and (1.25,.12) ..  (1.3,.5);
  \node[fill=myblue,circle,inner sep=1.5pt] at (.8,.09) {};  
  \draw[black] (-.75,-.75)node[below] {\tiny $4$} -- (-.75,.5);
  \draw[myred] (1.75,-.75)node[below] {\textcolor{black}{\tiny $1$}} -- (1.75,.5);}
  -\tfrac{t_{12}}{t_{21}}
\tikz[very thick,scale=.8,baseline={([yshift=0.6ex]current bounding box.center)}]{
  \draw[doublegreen] (0,-.75)node[below] {\tiny $3$} -- (0,-.5);
  \draw[mygreen] (-.015,-.5) .. controls (0,-.48) and (-.25,-.48) .. (-.3,.5);
  \draw[mygreen] (.015,-.5) .. controls (0,.-.48) and (.25,-.48) ..  (.7,.5);
  \node[fill=mygreen,circle,inner sep=1.5pt] at (-.15,-.35) {};
  \draw[doubleblue] (1,-.75)node[below] {\tiny $2$} -- (1,0);
  \draw[myblue] (.985,0) .. controls (.8,.12) and (.5,.12) .. (.3,.5);
  \draw[myblue] (1.015,0) .. controls (1,.12) and (1.25,.12) ..  (1.3,.5);
  \draw[black] (-.75,-.75)node[below] {\tiny $4$} -- (-.75,.5);
  \draw[myred] (1.75,-.75)node[below] {\textcolor{black}{\tiny $1$}} -- (1.75,.5)node[midway,fill=myred,circle,inner sep=1.5pt] at (-.18,-.25) {};}
+\tfrac{t_{12}}{t_{21}}\tfrac{t_{23}}{t_{32}}
  \tikz[very thick,scale=.8,baseline={([yshift=0.6ex]current bounding box.center)}]{
  \draw[doublegreen] (0,-.75)node[below] {\tiny $3$} -- (0,-.5);
  \draw[mygreen] (-.015,-.5) .. controls (0,-.48) and (-.25,-.48) .. (-.3,.5);
  \draw[mygreen] (.015,-.5) .. controls (0,.-.48) and (.25,-.48) ..  (.7,.5);
  \draw[doubleblue] (1,-.75)node[below] {\tiny $2$} -- (1,0);
  \draw[myblue] (.985,0) .. controls (.8,.12) and (.5,.12) .. (.3,.5);
  \draw[myblue] (1.015,0) .. controls (1,.12) and (1.25,.12) ..  (1.3,.5);
  \node[fill=myblue,circle,inner sep=1.5pt] at (.8,.09) {};  
  \draw[black] (-.75,-.75)node[below] {\tiny $4$} -- (-.75,.5);
  \draw[myred] (1.75,-.75)node[below] {\textcolor{black}{\tiny $1$}} -- (1.75,.5);
  \node[fill=myred,circle,inner sep=1.5pt] at (1.75,-.575) {};}
\]
Since we are only interested in the lowest homological degree we restrict to considering the complex 
\[
 q^{-2}F_tF_{43^{(2)}2^{(2)}1}F_b\mu
\xra{\ \ h\ \ }
qF_tF_{432321}F_b\mu  .  
\]
Finally, applying the isomorphism  $F_tF_{432321}F_b\simeq F_tF_{4332^{(2)}1}F_b$ results in the isomorphic complex
\[
 q^{-2}F_tF_{43^{(2)}2^{(2)}1}F_b\mu
\xra{\ \ 0\ \ }
qF_tF_{432321}F_b\mu  .  
\]

Adding the shift  corresponding to the normalization~\eqref{eq:nrmlzcplx}, and using the fact that 
$F_tF_{43^{(2)}2^{(2)}1}F_b\mu$ is a $\Bbbk$-supervector space of graded dimension $q+q^{-1}$,  yields 
\[
H_{-3}(T) =  q^{-7}\Bbbk \oplus q^{-9}\Bbbk , 
\]
which agrees with the odd Khovanov homology of $T$.

%
%
\section{Further properties of $\fR$}\label{sec:furtherR}

In this section we sketch several of its higher representation theory properties of $\fR$,  
some of them we have used in the previous section.

%
%

\subsection{Supercategorical action on $\cR^\Lambda(k,d)$}\label{sec:CAction}

Given a $\gln$-weight $\Lambda=(\Lambda_1,\dotsc,\Lambda_n)$ we write
$\overline{\Lambda}=(\Lambda_1-\Lambda_2,\dotsc ,\Lambda_{n-1}-\Lambda_{n})$ for the corresponding
$\sln$-weight.
The super algebra  $\overline{\fR}^{\Lambda}(\nu)$ for $\glk$ is defined to be the same as
the superalgebra $\overline{\fR}^{\overline{\Lambda}}(\nu)$ for $\slk$.

We now explain how the bifunctor 
$\Phi\colon \fR \times \fR^\Lambda \to  \fR^\Lambda$ in~\eqref{eq:bifunctor}.
gives rise to an action of $\glk$ on $\cR^\Lambda(k,d)$
for $\Lambda$ a dominant integrable $\glk$-weight of level 2 with
$\Lambda_1+\dotsm \Lambda_n=d$. 
A diagram $D$ in $\cR^\Lambda(k,d)$ with leftmost region labeled $\mu$
defines a web $W_D$ with bottom boundary labeled $\Lambda$ and with top boundary labeled $\mu$. 
We denote $f_i$,  $e_i\in U_q(\glk)$ the Chevalley generators. 

\smallskip

Behind Tubbenhauer's construction in~\cite{tubbenhauer} there is the observation that the transformation  
\begin{equation}\label{eq:daniel-trick}
\tikz[very thick,scale=.25,baseline={([yshift=-.8ex]current bounding box.center)}]{
	\draw (-2,-4)node[below] {\tiny $\textcolor{white}{b}a\textcolor{white}{b}$} to (-2,0);
	\draw [directed=1] (-2,0) to (-2,4)node[above] {\tiny $a+1$};
	\draw (2,-4)node[below] {\tiny $b$} to (2,-2);	\draw (2,-2) to (2,0);
	\draw [directed=1] (2,2.5) to (2,4)node[above] {\tiny $b-1$};
      	\draw (2,0) to (2,2.5);
	\draw [directed=.6] (2,0) to (-2,0);
	\draw [semithick,|->] (5,0) to (7,0);}\mspace{18mu} 
\tikz[very thick,scale=.25,baseline={([yshift=-.8ex]current bounding box.center)}]{
	\draw (-2,-4)node[below] {\tiny $\textcolor{white}{b}a\textcolor{white}{b}$} to (-2,1.5);
	\draw [dotted,directed=1] (-2,1.5) to (-2,4)node[above] {\tiny $0$};
	\draw (2,-4)node[below] {\tiny $b$} to (2,-2);	\draw[->] (2,-2) to (2,0);
	\draw [directed=1] (2,2.5) to (2,4)node[above] {\tiny $a+1$};
      	\draw (2,0) to (2,2.5);
	\draw [rdirected=.55] (2,1.5) to (-2,1.5);	\draw [rdirected=.55] (6,-2) to (2,-2);
	\draw [dotted] (6,-4)node[below] {\tiny $0$} to (6,-2);	\draw (6,-2) to (6,2.5);
       	\draw [->] (6,2.5) to (6,4)node[above] {\tiny $b-1$}; }
\end{equation}
turns any web into a web with all horizontal edges pointing to the right. This goes through the obvious embedding
of $\glk$ into $\mathfrak{gl}_{k+1}$.

\medskip

\n$\bullet$ The generator $f_i$ acts by stacking the web
\begin{equation}\label{eq:fi-action}
\dotsm \tikz[very thick,scale=.25,baseline={([yshift=-0ex]current bounding box.center)}]{
	\draw (-2,-4)node[below] {\tiny $\mu_i$} to (-2,0);
	\draw [directed=1] (-2,0) to (-2,4);
	\draw (2,-4)node[below] {\tiny $\mu_{i+1}$} to (2,-2);	\draw (2,-2) to (2,0);
	\draw [directed=1] (2,2.5) to (2,4);
      	\draw (2,0) to (2,2.5);
	\draw [rdirected=.55] (2,0) to (-2,0);}
\dotsm
\end{equation}
on the top of $W_D$. This means that $f_i$ acts on $\cR^\Lambda(n,d)$ as the functor that adds a strand labeled $i$
to the left of $D$.

\n$\bullet$  To define the action of $e_i$ we stack the web 
\[
\dotsm \tikz[very thick,scale=.25,baseline={([yshift=-0ex]current bounding box.center)}]{
	\draw (-2,-4)node[below] {\tiny $\mu_i$} to (-2,0);
	\draw [directed=1] (-2,0) to (-2,4);
	\draw (2,-4)node[below] {\tiny $\mu_{i+1}$} to (2,-2);	\draw (2,-2) to (2,0);
	\draw [directed=1] (2,2.5) to (2,4);
      	\draw (2,0) to (2,2.5);
	\draw [directed=.6] (2,0) to (-2,0);}
\dotsm
\]
on the top of $W_D$, then we use Tubbenhauer's trick~\eqref{eq:daniel-trick} to put in a form that uses only $F$'s.
The transformation in~\eqref{eq:daniel-trick} is not local and in order to be well defined one needs to keep trace
of the indices before and after acting with an $e_i$. Tubbenhauer's trick gives 
\[
\dotsm\mspace{20mu}
\tikz[very thick,scale=.25,baseline={([yshift=-.8ex]current bounding box.center)}]{
  \draw (-6,-7)node[below] {\tiny $\mu_{i-1}$} to (-6,-5);\draw (-6,-5) to (-6,5);\draw[dotted] (-6,5) to (-6,7)node[above] {\tiny $0$};
    \draw [rdirected=.55] (-2,5) to (-6,5);
  \draw (-2,-7)node[below] {\tiny $\mu_i$} to (-2,1.5);\draw [dotted] (-2,1.5) to (-2,5);\draw [->] (-2,5) to (-2,7);
	\draw (2,-7)node[below] {\tiny $\mu_{i+1}$} to (2,-2);	\draw[->] (2,-2) to (2,0);
	\draw [directed=1] (2,2.5) to (2,7);
      	\draw (2,0) to (2,2.5);
	\draw [rdirected=.55] (2,1.5) to (-2,1.5);\draw [rdirected=.55] (6,-2) to (2,-2);
	\draw (6,-7)node[below] {\tiny $\mu_{i+2}$} to (6,-5);\draw[dotted] (6,-5) to (6,-2);\draw (6,-2) to (6,2.5);
       	\draw [->] (6,2.5) to (6,7);
        \node at (1.55,8.12) {\tiny $\mu_{i}+1$};
        \node at (6.25,8.12) {\tiny $\mu_{i+1}-1$};
        \draw [rdirected=.55] (10,-5) to (6,-5);
\draw[dotted] (10,-7)node[below] {\tiny $0$} to (10,-5);\draw[->] (10,-5) to (10,7);}
\mspace{20mu}\dotsm
\]
Every time we act with an $e_i$ we embed $U_q(\mathfrak{gl}_k)\hookrightarrow U_q(\mathfrak{gl}_{k+1})$
and set
\[
e_i(W_D)=f_{ {1}^{(\mu_1)}\dotsm{i-1}^{(\mu_{i-1})} }
f_i^{(\mu_{i})}f_{i+1}^{(\mu_{i+1}-1)}
f_{ {i+2}^{(\mu_{i+2})} \dotsm {k}^{(\mu_k)}}
  (\mu,0)(W_D) . 
\] 
After being  acted with an $e_j$, $f_i$ acts on $W_D$ through the web corresponding to $f_{i+1}(\mu,0)$.

We define the action of $e_i$ on $\cR^\Lambda(k,d)$ as the superfunctor that adds 
\[
  \tikz[very thick,scale=1,baseline={([yshift=0ex]current bounding box.center)}]{
\draw[doubleblack] (-2.5,-.75)node[below] {\tiny $1$}  --  (-2.5,.5);\node at (-2.95,-.45) {\tiny $(\mu_{1})$};
\node at (-1.6,-.1) {$\dotsm$};
\draw[doublegreen] (-.5,-.75)node[below] {\tiny $i$}  --  (-.5,.5);\node at (-.95,-.45) {\tiny $(\mu_{i})$};
\draw[doubleblue] (.25,-.75)node[below] {\tiny $i+1$}  -- (.25,.5);\node at (1.1,-.45) {\tiny $(\mu_{i+1}-1)$};
\node at (2.2,-.1) {$\dotsm$};
\draw[doubleblack] (3,-.75)node[below] {\tiny $k$}  --  (3,.5);\node at (3.45,-.45) {\tiny $(\mu_{k})$};}
  \]
  to the left of $D$ (here $(\mu_{1})$, etc..., are the thicknesses) that is,  
 we act with the identity 2-morphism of
  $F_{ {1}^{(\mu_1)}\dotsm{i-1}^{(\mu_{i-1})} }F_i^{(\mu_{i})}F_{i+1}^{(\mu_{i+1}-1)}F_{ {i+2}^{(\mu_{i+2})} \dotsm {k}^{(\mu_k)}}(\mu,0)$.

Denote $\Phi(e_i)$ and $\Phi(f_i)$ the morphisms in $\fR^\Lambda$ that act as endofunctors of $\cR^\Lambda(n,d)$
through the action above.  
It is clear that $\Phi(uv)=\Phi(u)\Phi(v)$ for $u$, $v\in U_q(\glk)$. 
Note that $\Phi(1)(\mu)$ is a canonical element $F_{can}(\mu)$ as introduced in~\eqref{eq:canF}.

\begin{lem}\label{lem:isoPhi}
We have natural isomorphisms 
\begin{align*}
  \Phi(e_i)\Phi(f_i)(\lambda) &\simeq   \Phi(f_i)\Phi(e_i)(\lambda)\oplus \Phi(1)^{\oplus[\overline{\lambda}_i]}(\lambda)
& \text{if }\ \overline{\lambda}_i\geq 0, 
\\[1ex]
\Phi(f_i)\Phi(e_i)(\lambda) &\simeq
\Phi(e_i)\Phi(f_i)(\lambda)\oplus \Phi(1)^{\oplus[-\overline{\lambda}_i]}(\lambda)
& \text{if }\ \overline{\lambda}_i\leq 0.  
\end{align*}
\end{lem}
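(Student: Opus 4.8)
The plan is to mimic the proof of the $\mathfrak{sl}_2$-commutation relation for categorified quantum groups (Khovanov--Lauda, Rouquier) in its cyclotomic incarnation (Kang--Kashiwara), adapting it to the super and thickened setting of $\fR^\Lambda$. First I would unpack the definitions from \fullref{sec:CAction}: $\Phi(f_i)$ is the endofunctor of $\cR^\Lambda(k,d)$ that adds a simple strand labeled $i$ to the left of a diagram, while $\Phi(e_i)$, after Tubbenhauer's transformation~\eqref{eq:daniel-trick}, acts through the explicit divided-power word $F_{1^{(\mu_1)}\dotsm}F_i^{(\mu_i)}F_{i+1}^{(\mu_{i+1}-1)}\dotsm$. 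With these descriptions the two composites $\Phi(e_i)\Phi(f_i)(\lambda)$ and $\Phi(f_i)\Phi(e_i)(\lambda)$ become explicit objects of $\fR^\Lambda$ that differ only in the relative position of the simple $i$-strand with respect to the double strands and, crucially, to the leftmost region decorated by $\Lambda$. The statement to prove is then a natural isomorphism of such objects, natural in $\lambda$.

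Next I would build the components of the isomorphism out of the generating $2$-morphisms. The cup/cap morphisms (splitters and mergers) together with dots provide the unit and counit exhibiting $\Phi(e_i)$ as a grading-shifted biadjoint of $\Phi(f_i)$, and concatenating these yields the candidate maps. I would package the claim as a short exact sequence
\[
0 \to \Phi(f_i)\Phi(e_i)(\lambda) \to \Phi(e_i)\Phi(f_i)(\lambda) \to \Phi(1)^{\oplus[\overline{\lambda}_i]}(\lambda)\to 0
\]
(and its mirror when $\overline{\lambda}_i\le 0$), where the surjection is built from a cap pulled across the $\Lambda$-region and the identity summands arise from the cyclotomic relation at that region. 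Verifying exactness and splitting is a diagrammatic computation governed by the dot--crossing relations, the digon relations~\eqref{eq:digons}, the dumbbell relation~\eqref{eq:dumbelXing}, and the nilHecke relation~\eqref{eq:dotslide-nilH}; the cyclotomic condition is what forces the extra copies of $\Phi(1)$ to appear with the correct multiplicity.

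Because $\Lambda$ has level $2$ and all weights lie in $\{0,1,2\}$, we have $\overline{\lambda}_i\in\{0,\pm1,\pm2\}$, so the relevant quantum integer is one of $[0]=0$, $[1]=1$, or $[2]=q+q^{-1}$. This collapses the general argument to a short finite list of cases, each of which can be settled by an explicit decomposition of the relevant thick object (using~\eqref{eq:morethanthreestrands} together with the splitter and digon relations) rather than by invoking the full quantum-integer bookkeeping. Alternatively, one may verify each case directly on the exterior-algebra module $P(\nu)$ of \fullref{sec:polyaction}, where $f_i$ and $e_i$ act by the concrete Demazure-type operators computed there.

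The hard part will be controlling the super signs. Unlike the even case, the interchange law carries the Koszul sign $(-1)^{p(f)p(g)}$ from~\eqref{eq:chronology}, the dot-slides past crossings pick up $(-1)^{p_{ij}}$ from~\eqref{eq:dotslides}, and the triple-crossing relation~\eqref{eq:klrR3} is sign-twisted. One must therefore check that the cup, cap and dot maps entering the exact sequence carry the parities attached to the generators of $\fR(\nu)$, that the splitting morphisms are compatible with these parities, and that no sign obstructs the isomorphism. This is precisely where the skew structure departs from Khovanov--Lauda--Rouquier, and I expect it to absorb the bulk of the care in the argument.
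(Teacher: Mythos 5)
Your third paragraph is, in essence, the paper's entire proof: the paper reduces both composites, via Tubbenhauer's rewriting, to local words in $F$'s at the weight $(\lambda_i,\lambda_{i+1},0)$ --- namely $F_i^{(\lambda_i-1)}F_{i+1}^{(\lambda_{i+1})}F_i$ versus $F_{i+1}F_i^{(\lambda_i)}F_{i+1}^{(\lambda_{i+1}-1)}$ --- and then checks the four cases $(\lambda_i,\lambda_{i+1})\in\{(1,0),(1,1),(2,0),(2,1)\}$ (the mirror direction is handled the same way) using only two inputs: the thick-calculus decomposition $F_iF_i\simeq qF_i^{(2)}\oplus q^{-1}F_i^{(2)}$ and the categorified Serre decomposition $F_iF_{i+1}F_i\simeq F_i^{(2)}F_{i+1}\oplus F_{i+1}F_i^{(2)}$, together with cyclotomic vanishing of any divided power that exits $\{0,1,2\}$. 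The extra $\Phi(1)^{\oplus[\overline{\lambda}_i]}$ summands are exactly the surviving divided-power terms, so no separate mechanism is needed to produce them; and the sign anxiety of your last paragraph largely dissolves on this route, since the decompositions invoked are already-proven relations of $\fR(\nu)$ and require no fresh Koszul-sign analysis.

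Where you diverge from the paper is your headline route, and it has two concrete problems. First, the Kang--Kashiwara-style short exact sequence split by biadjunction data presupposes cup/cap 2-morphisms relating $\Phi(e_i)$ and $\Phi(f_i)$; in this $F$-only calculus no such generators exist --- any candidate map must itself be a KLR-type diagram between words in $F$'s --- and the paper only asserts the biadjunction in a lemma stated \emph{after} \fullref{lem:isoPhi}, whose units and counits are extracted from the isomorphisms constructed in this very proof, so taking the adjunction as input would be circular in the paper's logical order. Second, your fallback of verifying the cases on the exterior-algebra module $P(\nu)$ of \fullref{sec:polyaction} cannot close the argument: \fullref{prop:basis} only establishes that $\fR(\nu)$ acts on $P(\nu)$, not that this action is faithful, so agreement of the two sides inside $\End(P(\nu))$ does not yield a natural isomorphism of superfunctors. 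The case-check is both sufficient and what the paper actually does; the exact-sequence scaffolding buys nothing here.
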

\begin{proof}
These are instances of the categorified higher Serre relations. 
Denote $F_u=F_{ {1}^{(\lambda_1)}\dotsm{i-1}^{(\lambda_{i-1})} }$ and $F_d=F_{ {i+2}^{(\lambda_{i+2})} \dotsm {k}^{(\lambda_k)}}$. 
  We have
\begin{align*}
    \Phi(e_i)\Phi(f_i)(\lambda) &=
F_u
F_i^{(\lambda_{i}-1)}F_{i+1}^{(\lambda_{i+1})}
F_d
    F_i(\lambda,0)
    \\ &\simeq
F_u
F_i^{(\lambda_{i}-1)}F_{i+1}^{(\lambda_{i+1})}
F_i(\dotsc,\lambda_i,\lambda_{i+1},0,\lambda_{i+2},\dotsc)
F_d ,
(\lambda,0) , 
\intertext{and} 
  \Phi(f_i)\Phi(e_i)(\lambda) &=
 F_tF_{i+1}F_i^{(\lambda_i)}F_{i+1}^{(\lambda_{i+1}-1)}F_b(\lambda,0) , 
\end{align*}  
and therefore, it is enough to check that the relations above are satisfied by the superfunctors 
$F_i^{(\lambda_{i}-1)}F_{i+1}^{(\lambda_{i+1})}F_i(\lambda_i,\lambda_{i+1},0)$ 
and $F_{i+1}F_i^{(\lambda_i)}F_{i+1}^{(\lambda_{i+1}-1)}(\lambda_i,\lambda_{i+1},0)$.
Suppose $\lambda_i\geq \lambda_{i+1}$. Then we have $\lambda_i\in\{1,2\}$ and $\lambda_{i+1}\in\{0,1\}$.   
The computations involved are rather simple and we can check the four cases separately.
\begin{enumerate}
\item $(\lambda_i,\lambda_{i+1})=(1,0)$:
  \begin{align*}
\Phi(e_i)\Phi(f_i)(\lambda) &=
  F_i^{(\lambda_{i}-1)}F_{i+1}^{(\lambda_{i+1})}F_i(\lambda_i,\lambda_{i+1})= F_i(1,0)=0\oplus F_{can}(1,0),
  \\ &=
   \Phi(f_i)\Phi(e_i)(\lambda) \oplus \Phi(1)(\lambda). 
  \end{align*}
  
\item  $(\lambda_i,\lambda_{i+1})=(1,1)$: 
  \begin{align*}
\Phi(e_i)\Phi(f_i)(\lambda) &=
  F_{i}F_{i+1}(1,1,0)= \Phi(f_i)\Phi(e_i)(\lambda). 
  \end{align*}

\item  $(\lambda_i,\lambda_{i+1})=(2,0)$: 
  \begin{align*}
\Phi(e_i)\Phi(f_i)(\lambda) &=
F_iF_i(2,0,0) \simeq q F_i^{(2)}(2,0,0) + q^{-1}F_i^{(2)}(2,0,0) = \Phi(1)^{\oplus [2]}(\lambda). 
  \end{align*}

\item  $(\lambda_i,\lambda_{i+1})=(2,1)$: 
  \begin{align*}
\Phi(e_i)\Phi(f_i)(\lambda) &=
 F_iF_{i+1}F_i(2,1,0) \simeq 0 \oplus F_i^{(2)}F_{i+1}(2,1,0)
=  \Phi(f_i)\Phi(e_i)(\lambda)\oplus \Phi(1)(\lambda). 
  \end{align*}  
  \end{enumerate}
An this proves the first isomorphism in the statement. The second isomorphism can be checked using the same
method. 
\end{proof}

The proof of~\fullref{lem:isoPhi} uses several supernatural transformations between the various compositions of
$\Phi(f_i)(\lambda)$ and $\Phi(e_i)(\lambda)$ and $\Phi(1)(\lambda)$ that can be given a presentation in terms of the
diagrams from $\fR$. We act with such diagrams by stacking them on the top of the diagrams for the image
of $\Phi$. 
On the weight space $(1,1)$ these maps coincide with the maps used to define
the chain complex for a tangle diagram in the previous section.
In the general case these maps are units and co-units of adjunctions in the following.
\begin{lem}
Up to degree shifts, the functor $\Phi(e_i)$ is left and right adjoint to $\Phi(f_i)$.  
\end{lem}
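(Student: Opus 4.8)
The plan is to establish the adjunction $\Phi(e_i)\dashv\Phi(f_i)$ (and the reverse, up to grading shift) by exhibiting explicit unit and counit $2$-morphisms in $\fR^\Lambda$ and checking the triangle (zig-zag) identities. Since $\Phi(f_i)$ is defined as the functor that adds a strand labeled $i$ to the left of a diagram, and $\Phi(e_i)$ adds the thick sequence $F_{1^{(\mu_1)}\dotsm}F_i^{(\mu_i)}F_{i+1}^{(\mu_{i+1}-1)}\dotsm$ to the left, the natural transformations realizing the adjunctions are built from the splitter, dot, and crossing generators of $\fR$. First I would fix the relevant local picture on a weight space $(\lambda_i,\lambda_{i+1})$, reducing as in the proof of~\fullref{lem:isoPhi} to the four cases $(\lambda_i,\lambda_{i+1})\in\{(1,0),(1,1),(2,0),(2,1)\}$ together with their reflections, since the outer factors $F_u$ and $F_d$ are spectators and the adjunction datum is supported on the two strands labeled $i$ and $i+1$.

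Concretely, I would write down the candidate unit $\un\to\Phi(f_i)\Phi(e_i)$ and counit $\Phi(e_i)\Phi(f_i)\to\un$ as cap-- and cup--type $2$-morphisms, assembled from the crossings together with the splitters and dots whose action was computed in~\fullref{prop:basis}. The key structural input is~\fullref{lem:isoPhi}: the isomorphisms
\[
\Phi(e_i)\Phi(f_i)(\lambda)\simeq\Phi(f_i)\Phi(e_i)(\lambda)\oplus\Phi(1)^{\oplus[\overline\lambda_i]}(\lambda)
\]
(and its dual) already identify the relevant hom-spaces and pin down the degree shifts by which the adjunction must be twisted, so the candidate unit and counit are forced up to normalization. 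I would then verify the two triangle identities
\[
(\id_{\Phi(f_i)}\otimes\varepsilon)\circ(\eta\otimes\id_{\Phi(f_i)})=\id_{\Phi(f_i)},\qquad
(\varepsilon\otimes\id_{\Phi(e_i)})\circ(\id_{\Phi(e_i)}\otimes\eta)=\id_{\Phi(e_i)},
\]
by simplifying the resulting diagrams using the defining relations of $\fR(\nu)$, principally the splitter/dot relations~\eqref{eq:digons}, the dumbbell relation~\eqref{eq:dumbelXing}, the pitchfork relations~\eqref{eq:lpitchforks}--\eqref{eq:dpitchforks}, and the dot-slides~\eqref{eq:dotslides}.

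The main obstacle I expect is bookkeeping of signs and parities rather than any conceptual difficulty. Because $\fR$ is a \emph{supercategory}, the interchange law~\eqref{eq:chronology} introduces a sign $(-1)^{p(f)p(g)}$ whenever two odd morphisms exchange relative height, so the zig-zag composites must be evaluated respecting the Morse-function/chronology data, and the unit and counit may carry nonzero parity (compare the reversed parities of $\imath,\varepsilon$ noted after~\eqref{eq:frobb}). I would therefore carry the parities of $\eta$ and $\varepsilon$ explicitly through each triangle identity and confirm that the accumulated signs cancel, case by case over the four weight configurations. A secondary point is that the left and right adjunctions differ by a grading shift (hence the phrase ``up to degree shifts''), which I would read off directly from the two displayed isomorphisms in~\fullref{lem:isoPhi}; once the supported case $i,i+1$ is settled, the general statement follows since $\Phi$ is monoidal in the $\fR$-factor and the spectator strands $F_u,F_d$ contribute identity $2$-morphisms.
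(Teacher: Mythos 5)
Your proposal is sound and is, in substance, exactly what the paper intends: the paper states this lemma \emph{without proof}, immediately after remarking that the supernatural transformations appearing in the proof of~\fullref{lem:isoPhi} --- presented as diagrams of $\fR$ stacked on top of the image of $\Phi$, and coinciding on the weight space $(1,1)$ with the maps defining the tangle complex --- are the units and counits of these adjunctions. Your plan (reduce to the two colors $i,i+1$ with $F_u,F_d$ as spectators exactly as in the paper's own proof of~\fullref{lem:isoPhi}, take splitter/crossing diagrams as unit and counit, read the degree shifts off the two isomorphisms of~\fullref{lem:isoPhi}, and verify the zig-zag identities case by case while tracking parities through the super interchange law~\eqref{eq:chronology}) fills in precisely the verification that the paper leaves implicit, so it is consistent with, and more detailed than, the paper's treatment.
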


\begin{lem}
  We have the following natural isomorphisms:
\begin{align*}
  \Phi(e_j)\Phi(f_i)(\lambda) &\simeq \Phi(f_i)\Phi(e_j)(\lambda) &\text{for }\ i\neq j ,
  \\
  \Phi(f_i)\Phi(f_{i\pm 1}) \Phi(f_i)(\lambda) &\simeq \Phi(f_{i}^{(2)})\Phi(f_{i\pm 1})(\lambda) \oplus \Phi(f_{i\pm 1})\Phi(f_{i}^{(2)})(\lambda) , 
  \\
  \Phi(e_i)\Phi(e_{i\pm 1})(\lambda) \Phi(e_i) &\simeq \Phi(e_{i}^{(2)})\Phi(e_{i\pm 1})(\lambda)\oplus\Phi(e_{i\pm 1})\Phi(e_{i}^{(2)})(\lambda) . 
\end{align*}
\end{lem}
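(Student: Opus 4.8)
The plan is to treat all three isomorphisms as instances of the categorified quantum group relations and to prove them by the same weight-space reduction used in \fullref{lem:isoPhi}. Since $\Phi(f_i)$ and $\Phi(e_i)$ are defined by stacking explicit webs on the left of a diagram, on each weight space the functors appearing on both sides of each isomorphism become honest compositions of the various $F_j^{(\varepsilon)}$ evaluated at a concrete weight $(\ldots,\lambda_i,\lambda_{i+1},\ldots,0)$. One then exhibits the required natural isomorphisms diagrammatically, using the thick-calculus decomposition $F_iF_i\simeq qF_i^{(2)}\oplus q^{-1}F_i^{(2)}$ (established in the proof of \fullref{lem:isoPhi}) together with the defining relations of $\fR$. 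As in \fullref{lem:isoPhi}, it suffices to check a short finite list of cases, since each relevant weight entry lies in $\{0,1,2\}$.

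For the first isomorphism, with $i\neq j$, I would localize at the two weight entries moved by $f_i$ and the entries touched by the Tubbenhauer trick \eqref{eq:daniel-trick} for $e_j$. When $\vert i-j\vert>1$ the two groups of strands are non-adjacent and \eqref{eq:klrR2} makes the crossing of the $i$-strand past the $j$-strands invertible (equal to $t_{ij}$ times the identity), so sliding $\Phi(f_i)$ through $\Phi(e_j)$ directly produces the isomorphism $\Phi(e_j)\Phi(f_i)(\lambda)\simeq\Phi(f_i)\Phi(e_j)(\lambda)$. When $\vert i-j\vert=1$ the strands genuinely interact; here one tracks the $F$-form produced by \eqref{eq:daniel-trick} and uses the mixed single/double crossing relations together with the pitchfork and digon relations \eqref{eq:digons}, matching the two composites case by case on $(\lambda_i,\lambda_{i+1})$. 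The second isomorphism, the Serre relation for the $f_i$, is handled the same way: restricting to a three-strand weight space it becomes the categorified Serre relation \eqref{eq:R3serre} of $\fR$, combined with the splitting $F_iF_i\simeq qF_i^{(2)}\oplus q^{-1}F_i^{(2)}$ to produce the two summands $\Phi(f_i^{(2)})\Phi(f_{i\pm1})(\lambda)$ and $\Phi(f_{i\pm1})\Phi(f_i^{(2)})(\lambda)$.

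Rather than compute the third isomorphism directly---which is awkward because $\Phi(e_i)$ is defined non-locally---I would deduce it from the Serre relation for the $f_i$ by applying the biadjointness of $\Phi(e_i)$ and $\Phi(f_i)$ established in the preceding lemma. Taking the left adjoint of the $f$-Serre isomorphism, using $\Phi(f_j)^{L}\simeq\Phi(e_j)$ up to a grading shift and $\Phi(f_i^{(2)})^{L}\simeq\Phi(e_i^{(2)})$, reverses each composite and turns the right-hand side into $\Phi(e_{i\pm1})\Phi(e_i^{(2)})(\lambda)\oplus\Phi(e_i^{(2)})\Phi(e_{i\pm1})(\lambda)$, which is precisely the claimed $e$-Serre relation, the symmetry of the outer factors keeping the left-hand composite in the stated order. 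The main obstacle I anticipate is bookkeeping rather than conceptual: in the super setting the crossing exchanges of \eqref{eq:chronology} introduce signs depending on parities, and the non-locality of \eqref{eq:daniel-trick} means one must carefully record the weights before and after each application of $e_j$ so that the isomorphisms assemble into genuinely natural transformations; the $\vert i-j\vert=1$ mixed case of the first relation is where this is most delicate.
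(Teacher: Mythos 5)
Your treatment of the first two isomorphisms is essentially the paper's own proof: the paper also proceeds by case-by-case computation, writing both composites explicitly as words in divided-power $F$'s (using the definition of $\Phi(e_i)$ via \eqref{eq:daniel-trick}), observing that the level-2 constraint kills both sides for most weights, and resolving the surviving cases with relations in $\fR$. Its worked example is $\Phi(e_i)\Phi(f_{i+1})(\lambda)\simeq\Phi(f_{i+1})\Phi(e_i)(\lambda)$, where both sides vanish unless $\lambda_{i+1}=2$ and $\lambda_{i+2}\in\{0,1\}$, and the remaining cases follow from \eqref{eq:R3serre}--\eqref{eq:dumbelXing}; this matches your plan for the adjacent case, and your use of \eqref{eq:klrR2} in the distant case is consistent with it.

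Where you genuinely diverge is the third isomorphism: the paper leaves it to the same direct computation, while you deduce the $e$-Serre relation from the $f$-Serre relation by taking left adjoints. The scheme is formally sound --- $(ABC)^{L}=C^{L}B^{L}A^{L}$, adjoints of isomorphisms are isomorphisms, and the palindromic shape of $f_if_{i\pm1}f_i$ preserves the order --- and it buys a real advantage: it avoids iterating the non-local trick \eqref{eq:daniel-trick}, which is the unpleasant part of computing composites of several $e$'s directly. But as written it has two gaps. First, you invoke $\Phi(f_i^{(2)})^{L}\simeq\Phi(e_i^{(2)})$, which the paper never establishes; you would need to justify it, say by realizing $\Phi(f_i^{(2)})$ as the summand of $\Phi(f_i)\Phi(f_i)$ cut out by the thick-calculus idempotent and checking that the adjoint idempotent cuts out $\Phi(e_i^{(2)})$ inside $\Phi(e_i)\Phi(e_i)$. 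Second, and more seriously, the adjointness lemma you cite holds only \emph{up to degree shifts} (and, in this super setting, potentially parity shifts), and these shifts depend on the weight at which each factor is applied. Taking adjoints of the $f$-Serre isomorphism therefore yields the $e$-Serre relation only up to a priori different shifts on the three terms; to recover the shift-free statement of the lemma you must still verify that the accumulated shifts (and parities) agree, for instance by evaluating on one weight space or comparing graded dimensions. The paper's direct computation never meets this issue, since all shifts are carried explicitly; your route trades that explicit bookkeeping for these two verifications.
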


\begin{proof}
The proof consists of a case-by-case computation.
We illustrate the proof with the case of $\Phi(e_i)\Phi(f_{i+1})(\lambda)\simeq \Phi(f_{i+1}) \Phi(e_i)(\lambda)$ and leave the
rest to the reader.
We have
\begin{align*}
  \Phi(e_i)\Phi(f_{i+1})(\lambda) &= F_i^{(\lambda_i)}F_{i+1}^{(\lambda_{i+1}-2)}F_{i+2}^{(\lambda_{i+2}+1)}F_{i+1}(\lambda) ,
  \intertext{and}
  \Phi(f_{i+1}) \Phi(e_i)(\lambda) &= F_i^{(\lambda_i)}F_{i+2}F_{i+1}^{(\lambda_{i+1}-1)}F_{i+2}^{(\lambda_{i+2})}(\lambda) ,
\end{align*}  
which are zero  unless $\lambda_{i+1}=2$ and $\lambda_{i+2}\in\{0,1\}$. 
If $\lambda_{i+1}=2$ these can be written 
\begin{align*}
  \Phi(e_i)\Phi(f_{i+1})(\lambda) &= F_i^{(\lambda_i)}F_{i+2}^{(\lambda_{i+2}+1)}F_{i+1}(\lambda) ,
  \intertext{and}
  \Phi(f_{i+1}) \Phi(e_i)(\lambda) &= F_i^{(\lambda_i)}F_{i+2}F_{i+1}F_{i+2}^{(\lambda_{i+2})}(\lambda) .
\end{align*}  
The case $\lambda_{i+2}=0$ is immmediate and the case $\lambda_{i+2}=1$ follows from the Serre relation  
\eqref{eq:R3serre}-\eqref{eq:dumbelXing}. 
\end{proof}

As explained in~\cite[Sections 1.5 and 6]{BrundanEllis-Monoidal} the Grothendieck group of a ($\bZ$-graded)
monoidal supercategory is a  $\bZ[q^{\pm 1},\pi]/ (\pi^2-1)$-algebra. 
Nontrivial parity shifts will occur when applying Tubbenhauer's trick. 
All the above can be used to prove the following. 
\begin{thm}
The assignment above defines an action of $U_q(\glk)$ on $\cR^\Lambda(k,d)$. 
With this action we have an isomorphism of $K_0(\cR^\Lambda(k,d))$ with the irreducible, finite-dimensional,  
$U_q(\glk)$-representation of highest weight $\Lambda$ at $\pi=1$.
\end{thm}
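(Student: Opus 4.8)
The plan splits along the two assertions. First I would verify that the endofunctors $\Phi(e_i)$ and $\Phi(f_i)$ of $\cR^\Lambda(k,d)$ satisfy the defining relations of $U_q(\glk)$ up to natural isomorphism, so that on passing to Grothendieck groups they induce genuine $\bZ[q^{\pm 1}]$-linear operators. The Cartan relations are automatic: on the weight space $\lambda$ the generator $K_i$ is realised by the grading shift $q^{\overline{\lambda}_i}$, and commutativity of the $K_i$ is then immediate. The $\slt$-relation $[e_i,f_i]=(K_i-K_i^{-1})/(q-q^{-1})$ is exactly the content of \fullref{lem:isoPhi}, once one records that its two isomorphisms correspond to the two signs of $\overline{\lambda}_i$ and combine into a single identity after decategorification. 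The remaining commutation $\Phi(e_j)\Phi(f_i)\simeq\Phi(f_i)\Phi(e_j)$ for $i\neq j$ and the quantum Serre relations are supplied by the last two lemmas of this subsection, while biadjointness of $\Phi(e_i)$ and $\Phi(f_i)$ guarantees these isomorphisms are coherent. Since $K_0$ only sees isomorphism classes, this makes $K_0(\cR^\Lambda(k,d))$, after setting $\pi=1$, into a $U_q(\glk)$-module; the parity shifts introduced by Tubbenhauer's trick are precisely the powers of $\pi$ killed at $\pi=1$.

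Second, I would identify this module. Write $v_\Lambda$ for the class of the bottom object on the weight $\Lambda$. It has $\glk$-weight $\Lambda$ and is annihilated by every $\Phi(e_i)$ by the defining cyclotomic relation, hence is a highest weight vector. The functors $\Phi(f_i)$ generate $K_0(\cR^\Lambda(k,d))$ from $v_\Lambda$, because every $1$-morphism of $\cR^\Lambda(k,d)$ is a word in the $f_i$ once the splitter idempotents of the thick calculus are used to rewrite divided powers. Together with \fullref{lem:isoPhi}, which encodes integrability by bounding the action of each $f_i$, this produces a surjection of $U_q(\glk)$-modules from the irreducible highest weight module $V(\Lambda)$ onto $K_0(\cR^\Lambda(k,d))$ at $\pi=1$.

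It remains to see this surjection is an isomorphism, that is, that $K_0$ is not smaller than $V(\Lambda)$; this is the crux. The strategy is a graded-rank comparison. The faithful action on the exterior algebra $P(\nu)$ of \fullref{prop:basis} yields a basis theorem for $\overline{\fR}^\Lambda(\nu)$, showing it is free over $\Bbbk$ of the expected graded rank and, via the splitter idempotents, controlling the thickening $\fR^\Lambda(\nu)$ so that its Grothendieck group agrees with that of the category of projective $\overline{\fR}^\Lambda(\nu)$-supermodules. Reducing modulo $2$ and invoking \fullref{lem:Risomod} identifies this mod-$2$ algebra with the ordinary level-$2$ cyclotomic KLR algebra $R^\Lambda(\nu)$, whose Grothendieck group is known by the cyclotomic categorification theorem (Brundan--Kleshchev, Kang--Kashiwara, Webster) to be exactly $V(\Lambda)$. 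Since freeness over $\Bbbk$ makes graded rank invariant under this base change, the graded dimension of $K_0(\cR^\Lambda(k,d))$ equals that of $V(\Lambda)$, and the surjection is forced to be an isomorphism.

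The main obstacle is this last rank comparison. In the even setting the required basis theorem for cyclotomic KLR algebras is already delicate, and here the super-signs mean one cannot quote it directly: one must establish a \emph{super} basis theorem, with the faithful representation on $P(\nu)$ playing the role of the polynomial representation in the classical nilHecke/KLR story, and then verify that no collapse occurs when passing between $\fR^\Lambda$, $\overline{\fR}^\Lambda$ and their reductions modulo $2$. Controlling the thick idempotents well enough to match the two Grothendieck groups, and confirming that reduction mod $2$ stays flat on the relevant modules, are the technical points I expect to absorb most of the work.
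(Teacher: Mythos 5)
Your first two paragraphs coincide with what the paper actually does: the paper proves \fullref{lem:isoPhi}, states the adjunction lemma and the lemma giving $\Phi(e_j)\Phi(f_i)\simeq\Phi(f_i)\Phi(e_j)$ and the Serre-type decompositions, recalls that the Grothendieck group of a graded supercategory is a $\bZ[q^{\pm1},\pi]/(\pi^2-1)$-algebra, and then simply asserts the theorem with ``All the above can be used to prove the following'' --- no identification argument is written down. Your completion of the first assertion, and your observation that the class of the identity $1$-morphism at $\Lambda$ is a cyclic, integrable highest weight vector, so that $K_0(\cR^\Lambda(k,d))$ at $\pi=1$ is a quotient of $V(\Lambda)$, are correct and fill in what the paper leaves implicit.

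The genuine gap is your injectivity step, and it is more than the technical overhead you flag. First, \fullref{prop:basis} asserts only that the formulas define an action of $\fR(\nu)$ on $P(\nu)$; faithfulness is never claimed, and $P(\nu)$ is a module over $\fR(\nu)$, not over the cyclotomic quotient. In the even theory, passing from the polynomial representation of $R(\nu)$ to a basis theorem (or any structural control) for $R^\Lambda(\nu)$ is precisely the hard content of Brundan--Kleshchev and Kang--Kashiwara, and its super analogue is the main theorem of \cite{KKO2}; it cannot be treated as a corollary of \fullref{prop:basis}. Second, the inference ``equal graded dimensions of algebras $\Rightarrow$ equal Grothendieck groups'' fails: the rank of $K_0$ of graded projectives counts indecomposable projectives (equivalently, simples), which is not a function of graded dimension --- over a field, $\Bbbk\times\Bbbk$ and $\Bbbk[x]/(x^2)$ have the same dimension but $K_0$ of ranks $2$ and $1$. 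Third, \fullref{lem:Risomod} is an isomorphism only after $-\otimes_\bZ(\bZ/2\bZ)$, and idempotents do not lift along $\bZ\to\bZ/2\bZ$ (there is no completeness), so even a $K_0$-statement for the mod-$2$ algebra would not transport back to $\fR^\Lambda(\nu)$; your ``no collapse under base change'' is exactly the point that cannot be arranged this way.

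Note finally that the heavy rank comparison is avoidable. At $\pi=1$ and over $\bQ(q)$, $V(\Lambda)$ is irreducible, so your surjection $V(\Lambda)\twoheadrightarrow K_0(\cR^\Lambda(k,d))$ is forced to be an isomorphism as soon as (i) the Hom-categories are Krull--Schmidt (e.g. $\Bbbk$ a field and each graded Hom-space finite-dimensional in the cyclotomic quotient), so that $K_0$ is a free $\bZ[q^{\pm1}]$-module, and (ii) $K_0\neq 0$: the kernel of the surjection is then a torsion-free module with vanishing rational span, hence zero. This reduces the crux to Hom-finiteness and nonvanishing of $\fR^\Lambda(\nu)$ --- still not proved in the paper, but far lighter than a super basis theorem plus a mod-$2$ dimension count.
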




\vspace*{1cm}


\end{document}